\documentclass[11pt]{article}

\usepackage{amsmath,amsthm,amsfonts,amssymb}
\usepackage[T1]{fontenc}
\usepackage[utf8]{inputenc}
\usepackage[english]{babel}
\usepackage{mathtools}
\usepackage{geometry}
\usepackage{tikz}
\usetikzlibrary{calc,angles,quotes,patterns,decorations.pathreplacing,arrows.meta}
\usepackage[colorlinks=true,allcolors=blue]{hyperref}

\let\le\leqslant
\let\ge\geqslant

\theoremstyle{plain}
\newtheorem{theorem}{Theorem}[section]
\newtheorem{lemma}[theorem]{Lemma}
\newtheorem{proposition}[theorem]{Proposition}
\newtheorem{corollary}[theorem]{Corollary}
\theoremstyle{definition}
\newtheorem{definition}[theorem]{Definition}
\newtheorem{example}[theorem]{Example}
\theoremstyle{remark}
\newtheorem{remark}[theorem]{Remark}

\numberwithin{equation}{section}

\newcommand{\R}{\mathbb{R}}
\renewcommand{\P}{\mathbb{P}}
\providecommand{\Prob}{}\renewcommand{\Prob}[1]{\P\!\left\{#1\right\}}
\newcommand{\E}{\mathbb{E}}
\newcommand{\cN}{\mathcal{N}}
\newcommand{\rd}{\mathrm{d}}
\newcommand{\ind}{\mathbf{1}}
\newcommand{\eqd}{\stackrel{d}{=}}
\newcommand{\Leb}{\operatorname{Leb}}

\newcommand{\inv}{\operatorname{inv}}
\newcommand{\corr}{\operatorname{corr}}
\newcommand{\Arch}{\mathfrak{Arch}}
\newcommand{\Unif}{\mathit{Unif}}

\title{Geodesics in the continuous Hecke monoid}
\author{Sergei Presniakov\\ \small St.\ Petersburg State University \and Alexander Tiskin\\ \small St.\ Petersburg State University}
\date{}

\begin{document}
\maketitle

\begin{abstract}
A permuton is a probability measure on the unit square with uniform marginals, the limit shape
of a sequence of permutations. The sticky product of permutons is a continuous analogue of the
Demazure product in the $0$-Hecke monoid; on the distribution functions of permutons, that is,
on copulas, it is given by the tropical $(\min,+)$ product $\odot$. We show that every
absolutely continuous rotation-invariant probability measure $\rho$ on the plane, which we call
the \emph{profile}, generates a
one-parameter family of copulas $C^\rho=\{C^\rho_\beta\}_{\beta\in[0,\pi]}$. A copula in this
family is defined by a pair of projections of a random vector with distribution $\rho$ onto
directions at angle $\pi-\beta$. Along this family, the sticky product adds angles:
\[
  C^\rho_{\beta_1}\odot C^\rho_{\beta_2}=C^\rho_{\min(\beta_1+\beta_2,\,\pi)}.
\]
Hence in every such family any element other than the identity is a sticky product of two
others, and any element has roots of every natural degree. The normalized inversion number of
the copula $C^\rho_\beta$ equals $\beta/\pi$. For the
Archimedean measure $\Arch$ the family $C^{\Arch}$ consists of the limit shapes of the
permutations of the uniformly random sorting network, found by Angel et al.\ and Dauvergne. For
the Gaussian measure $\cN$ the family $C^{\cN}$ consists of Gaussian copulas and contains
$\Unif$, the uniform measure on the square, that is, the limit shape of the uniformly random
permutation; its square root within this family is the Gaussian copula with correlation
$-1/\sqrt2$. By the Herschel--Maxwell theorem the Gaussian measure is the only
rotation-invariant measure whose family contains $\Unif$; nevertheless, $\Unif$ also has
another, singular, sticky square root. A discrete analogue of the Gaussian family is the
geometric sorting network generated by rotating independent Gaussian points: after rotation by
the angle $\pi/2$ its permutation is exactly uniform.
\end{abstract}

\tableofcontents

\section{Introduction}\label{sec:intro}

\emph{Permutons} are probability measures on the unit square with uniform marginals, the limit
shapes of sequences of permutations~\cite{HKMRS2013}: a permutation $\sigma\in S_n$ corresponds
to the measure spread uniformly over the squares of side $1/n$ at the positions of the ones of
its matrix, and a permuton is a weak limit of such measures as $n\to\infty$.

A \emph{reduced word} of a permutation $\sigma\in S_n$ is a shortest expression of $\sigma$ as
a product of adjacent transpositions, that is, a shortest path from the identity permutation to
$\sigma$ in the Cayley graph of $S_n$. A reduced word of the reverse identity permutation
$w_0=(n,\dots,1)$ is called a \emph{sorting network}: applying its $\binom n2$ transpositions as
comparators, that is, swapping only when out of order, sorts any sequence; it is convenient to
say that the $k$-th transposition occurs at time $s=k/\binom n2\in[0,1]$. Angel et
al.~\cite{AHRV2007} considered uniformly random sorting networks of a given size $n$ and
conjectured, and Dauvergne~\cite{Dauvergne2022} proved, that the limit shape of the network's permutation at time $s$ is the
joint distribution of two projections of the \emph{Archimedean measure}, the uniform measure on
the sphere $S^2$ projected onto the equatorial disk, onto directions at angle $\pi s$. Such a
pair of projections lies in the square $[-1,1]^2$; after an affine change of coordinates to
$[0,1]^2$ one obtains a permuton that, as $s$ runs from $0$ to $1$, morphs from the diagonal
into the antidiagonal through ellipses inscribed in the square (Fig.~\ref{fig:families}, bottom
row).

In the $0$-Hecke monoid permutations are multiplied by the \emph{Demazure product}: a reduced
word of the second factor is applied to the first, and any transposition that would decrease
the inversion number is skipped. In the language of comparator networks this is the
concatenation of two networks, where the output of the first is fed into the second. In the
language of braids it is the concatenation of two braids in which a repeated crossing of a pair
of strands is skipped, so that each pair of strands crosses at most once. The identity
element of the monoid is the identity permutation, and the absorbing element is the reverse
identity permutation.

The \emph{sticky product} $\boxdot$ of permutons is a continuous analogue of the Demazure
product: to the limit shapes of two sequences of permutations it assigns the limit shape of
their Demazure products. It was introduced in the first author's bachelor's
thesis~\cite{PresnyakovThesis} (with the second author as academic advisor), see
also~\cite{PresnyakovTiskin}; independently and in different coordinates, the same product was
constructed by Defant~\cite{Defant2025}. For copulas, the operation $\boxdot$ is given by the tropical
$(\min,+)$ product $\odot$ (Definition~\ref{def:box}), and the identity and absorbing elements
are the Fr\'echet bounds $W$ and $M$, the copulas of the permutons supported on the
antidiagonal and on the diagonal of the square\footnote{Here a permutation is identified with
its matrix, row $i$ from top to bottom (\S\ref{sec:defs}). The identification with the graph of
the permutation, adopted in~\cite{AHRV2007} and in \S\ref{sec:rsn}, differs by the reflection
$v\mapsto1-v$ and swaps the diagonal with the antidiagonal; see \S\ref{sec:cors}.}.

The results of the paper are as follows.
\begin{enumerate}
  \item[(1)] \emph{Angle addition.} Let $\rho$ be an absolutely continuous rotation-invariant
    probability measure on the plane, and let $C^\rho=\{C^\rho_\beta\}_{\beta\in[0,\pi]}$ be the
    family of copulas of a pair of projections of a random vector distributed according to
    $\rho$ onto directions at angle $\pi-\beta$. Then $C^\rho_0=W$, $C^\rho_\pi=M$, and
    (Theorem~\ref{thm:main})
    \[
      C^\rho_{\beta_1}\odot C^\rho_{\beta_2}=C^\rho_{\min(\beta_1+\beta_2,\,\pi)} .
    \]
    We call a family with this law a \emph{geodesic} (Definition~\ref{def:geo}): it is a
    path from the identity element to the absorbing element along which the sticky product adds
    the parameter. The parameter has a meaning: $\beta/\pi$ is the normalized inversion number
    (Lemma~\ref{lem:kendall}).
  \item[(2)] \emph{The Archimedean family.} Let $\Arch$ be the Archimedean measure, the
    projection of the uniform measure on the sphere onto the equatorial disk. By the result of
    Angel et al.\ and Dauvergne, the copulas $C^{\Arch}_\beta$ are the limit shapes of the
    permutation of the uniformly random sorting network at times $s=1-\beta/\pi$. Our result is
    that the family $C^{\Arch}$ is a geodesic, that is, the limit shapes $\mu_s$ satisfy
    $\mu_{s_1}\boxdot\mu_{s_2}=\mu_{\max(s_1+s_2-1,\,0)}$ (Corollary~\ref{cor:rsn}).
  \item[(3)] \emph{The Gaussian family.} For the Gaussian measure the geodesic consists of
    Gaussian copulas with correlation $-\cos\beta$ and contains $\Unif=C^{\cN}_{\pi/2}$, the
    limit shape of the uniformly random permutation (Example~\ref{ex:families}). As in every
    geodesic, each element here has roots of every natural degree; the $n$-th root of $\Unif$
    is the Gaussian copula with correlation $-\cos(\pi/(2n))$, in particular
    $\sqrt{\Unif}:=\Unif^{\boxdot1/2}$ corresponds to correlation $-1/\sqrt2$
    (Corollary~\ref{cor:unifroots}). Geometric sorting networks, generated by rotating a set of
    points in the plane, are known~\cite{AHRV2007,Dauvergne2022}; for points with the
    Archimedean distribution Dauvergne showed that they describe the limits of subnetworks of
    the uniformly random network. We observe that for $n$ independent Gaussian points the
    permutation of such a network after rotation by the angle $\pi/2$ is exactly uniform, and
    the limit shapes of its permutations form the Gaussian geodesic
    (Proposition~\ref{prop:geom}).
  \item[(4)] \emph{Rigidity and non-uniqueness.} Among projection geodesics only the Gaussian
    one contains $\Unif$ (Theorem~\ref{thm:rigidity}, a consequence of the Herschel--Maxwell
    theorem), but $\Unif$ also has a singular sticky square root
    (Theorem~\ref{thm:nonuniqueness}), and there is a continuum of distinct geodesics
    (Proposition~\ref{prop:continuum}).
\end{enumerate}

The structure of the paper is as follows: \S\ref{sec:defs} --- copulas, $\odot$, and
geodesics; \S\ref{sec:family} --- the projection family and examples; \S\ref{sec:main} --- the
angle-addition theorem, roots, the inversion number; \S\S\ref{sec:gauss}--\ref{sec:nonuniqueness}
--- the Gaussian family, rigidity, the singular root; \S\ref{sec:rsn} --- sorting networks;
\S\ref{sec:context} --- context and open questions.

\section{Copulas and the sticky product}\label{sec:defs}

\begin{definition}[permuton, copula]\label{def:perm}
A \emph{permuton} $P$ is a probability measure on $[0,1]^2$ with uniform marginals:
$P(S\times[0,1])=P([0,1]\times S)=\Leb(S)$ for every Borel set $S\subseteq[0,1]$. Its
\emph{copula} is the distribution function
$C_P(u,v):=P([0,u]\times[0,v])$, $(u,v)\in[0,1]^2$.
\end{definition}

A copula is continuous, nondecreasing in each argument, vanishes when $u=0$ or $v=0$, satisfies
$C_P(u,1)=u$, $C_P(1,v)=v$, and lies between the Fr\'echet bounds
\begin{equation}\label{eq:frechet}
  W(u,v):=\max(u+v-1,0)\ \le\ C_P(u,v)\ \le\ \min(u,v)=:M(u,v).
\end{equation}
The increments of a copula are controlled by the marginals: for $u\le u'$, $v\le v'$
\begin{equation}\label{eq:incr}
  0\le C_P(u,v')-C_P(u,v)\le v'-v,\qquad 0\le C_P(u',v)-C_P(u,v)\le u'-u .
\end{equation}
A distribution function determines the measure uniquely, so $C_P\equiv C_Q$ implies $P=Q$.
Besides $W$ (the antidiagonal copula) and $M$ (the diagonal one) we shall need the independence
copula $\Unif(u,v)=uv$, the distribution function of Lebesgue measure on the square. Put
$P\preceq Q$ if $C_P\le C_Q$ pointwise; this order on permutons corresponds to the \emph{Bruhat
order} on permutations: by the classical criterion going back to Ehresmann~\cite{Ehresmann1934}
(see~\cite[Thm.~2.1.5]{BjornerBrenti2005}), two permutations are comparable in the Bruhat order
if and only if their count matrices $\#\{k\le i:\sigma(k)\ge j\}$ are comparable entrywise,
that is, the copulas $C_{P_\sigma}$ are comparable at the grid
nodes~\cite{PresnyakovThesis,PresnyakovTiskin}\footnote{In copula theory the same order is called the
concordance ordering~\cite{Nelsen2006}. Whether it agrees with the Bruhat order on permutations
or with its reverse depends on how a permutation is identified with a permuton; this is
immaterial for what follows.}; by~\eqref{eq:frechet}, $W$ is the least element and $M$ the
greatest.

\begin{definition}[tropical and sticky products,~\cite{PresnyakovThesis,PresnyakovTiskin}]\label{def:box}
The \emph{tropical}, or $(\min,+)$, \emph{product} of copulas is
\begin{equation}\label{eq:boxdot}
  (C_A\odot C_B)(u,v):=\min_{t\in[0,1]}\big[\,C_A(t,v)+C_B(u,1-t)\,\big],\qquad (u,v)\in[0,1]^2 .
\end{equation}
The \emph{sticky product} of permutons $P\boxdot Q$ is the permuton with copula $C_P\odot C_Q$.
\end{definition}

The minimum is attained, since the function being minimized is continuous in $t$; for the same
reason $C_A\odot C_B$ is continuous in $(u,v)$. That $C_P\odot C_Q$ is always the copula of a
permuton is shown below (Corollary~\ref{cor:limit}); this is not used in the proofs, since every
product is exhibited explicitly.

\emph{Convention.} We identify a
permuton with its copula and denote them by the same letter: below $W$, $M$, $\Unif$, and
$C^\rho_\beta$ stand both for the copula and for the permuton with this copula. In particular,
$\odot$ and $\boxdot$ are one operation in two notations, $C_{P\boxdot Q}=C_P\odot C_Q$, and
the inversion number and the support of a copula are those of the corresponding permuton.

\begin{definition}[geodesic]\label{def:geo}
A \emph{$\odot$-geodesic} is a family of copulas $\{C_\beta\}_{\beta\in[0,\pi]}$ with $C_0=W$,
$C_\pi=M$, and
\[
  C_{\beta_1}\odot C_{\beta_2}=C_{\min(\beta_1+\beta_2,\,\pi)}\qquad(\beta_1,\beta_2\in[0,\pi]).
\]
A family of permutons with these copulas is a \emph{$\boxdot$-geodesic}. Since this is one and
the same family written in two languages, from now on we simply say \emph{geodesic}; the same
applies to $\odot$- and $\boxdot$-roots.
\end{definition}

A geodesic is the image of a homomorphism from the monoid
$([0,\pi],\ (\beta_1,\beta_2)\mapsto\min(\beta_1+\beta_2,\pi))$ to the monoid of copulas; the
parameter $\beta/\pi$ plays the role of an additive logarithm, the identity element $0$ is
mapped to $W$ and the absorbing element $\pi$ to $M$.

\begin{remark}[why ``geodesic'']\label{rem:whygeo}
The name is justified by three observations.
(i) A sorting network is a shortest path from the identity permutation to the reverse identity
in the Cayley graph of $S_n$, and a path from the identity in the Cayley graph is shortest if and only if the
length of the permutation grows by one at each step. The length of a permutation is its
inversion number, and along a geodesic $C^\rho$ the normalized inversion number grows linearly
(Lemma~\ref{lem:kendall}) and adds under multiplication until the sum reaches one
(Theorem~\ref{thm:main}). Moreover, the uniformly random sorting
network, embedded in the sphere $S^{n-2}$, is close to an arc of a great
circle~\cite{AHRV2007,Dauvergne2022}.
(ii) By the angle-addition law, starting from the identity element $W$ and choosing a
``direction'' $\rho$, we move towards the absorbing element $M$ additively, as along a
one-parameter subgroup.
(iii) For the Archimedean measure the path $C^{\Arch}$ is the unique minimal-energy path between $M$
and $W$ in the space of permutons with the Wasserstein distance~\cite{RVV}, and in Riemannian
geometry minimal-energy paths are exactly the geodesics; whether something
similar holds for other profiles is unknown (\S\ref{sec:context}).
\end{remark}

\begin{lemma}[identity and absorbing element]\label{lem:basic}
For every copula $C$:
\begin{enumerate}
  \item[\rm(i)] $W\odot C=C\odot W=C$;
  \item[\rm(ii)] $M\odot C=C\odot M=M$;
  \item[\rm(iii)] $\Unif\odot\Unif=M$.
\end{enumerate}
\end{lemma}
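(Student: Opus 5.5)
We verify each identity directly from the definition \eqref{eq:boxdot}. For each one we compute the minimum over $t$ explicitly, using the Fréchet bounds \eqref{eq:frechet} and the increment bounds \eqref{eq:incr} to obtain a lower bound, and then exhibit a value of $t$ at which that bound is attained.

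\emph{(i)} First consider $W\odot C$. The quantity to minimize is $\max(t+v-1,0)+C(u,1-t)$.

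\begin{itemize}
  \item \emph{Upper bound.} Take $t=1-v$. The first term vanishes, giving the value $C(u,v)$.
  \item \emph{Lower bound for $t\le 1-v$.} Here $1-t\ge v$, so $C(u,1-t)\ge C(u,v)$ by monotonicity.
  \item \emph{Lower bound for $t> 1-v$.} By \eqref{eq:incr}, $C(u,v)-C(u,1-t)\le v-(1-t)=t+v-1$. Hence the sum is again at least $C(u,v)$.
\end{itemize}

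So $W\odot C=C$. The case $C\odot W$ is symmetric: the expression is $C(t,v)+\max(u-t,0)$. Take $t=u$ for the upper bound. For the lower bound use monotonicity in the first argument when $t\ge u$, and the increment bound $C(u,v)-C(t,v)\le u-t$ when $t<u$.

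\emph{(ii)} For $M\odot C$ the expression is $\min(t,v)+C(u,1-t)$.

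\begin{itemize}
  \item \emph{Lower bound.} Use $C\ge W$:
  \[
    \min(t,v)+\max(u-t,0)\ \ge\ \min(u,v).
  \]
  This is a short case check. If $t\ge v$, the sum is at least $v$. If $t<v$, the sum is $t+\max(u-t,0)\ge u$.
  \item \emph{Upper bound.} If $v\le u$, take $t=1$: this gives $v+C(u,0)=v$. If $u<v$, take $t=0$: this gives $0+C(u,1)=u$.
\end{itemize}

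Hence $M\odot C=M$. The case $C\odot M$ is analogous. The expression is $C(t,v)+\min(u,1-t)$, and the lower bound comes from $C(t,v)\ge\max(t+v-1,0)$. For the upper bound take $t=0$, giving $u$, or $t=1$, giving $v$.

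\emph{(iii)} For $\Unif\odot\Unif$ we minimize $f(t)=tv+u(1-t)=u+t(v-u)$. This is affine in $t$, so its minimum over $[0,1]$ is attained at an endpoint and equals $\min(u,v)=M(u,v)$.

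The main care point is the case split in (i), where the increment bound \eqref{eq:incr} must be used in the right direction. The rest is routine.
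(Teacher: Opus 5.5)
Your proof is correct and is essentially the paper's argument: the same witness points ($t=1-v$ and $t=u$ in (i), the endpoints $t\in\{0,1\}$ in (ii) and (iii)) and the same lower bounds from \eqref{eq:incr} and the Fr\'echet bound $C\ge W$. The only difference is cosmetic: in the lower bound for $M\odot C$ you split the case check at $t=v$ rather than at $t=u$, which works equally well.
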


\begin{proof}
(i) Let $\Psi(t)=\max(t+v-1,0)+C(u,1-t)$. For $t=1-v$ we have $\Psi=C(u,v)$. For $t\le1-v$
the first term vanishes and $C(u,1-t)\ge C(u,v)$. For $t>1-v$, by~\eqref{eq:incr},
$C(u,v)-C(u,1-t)\le t+v-1$, whence $\Psi(t)\ge C(u,v)$. Thus $W\odot C=C$. Similarly,
$(C\odot W)(u,v)=\min_t[C(t,v)+\max(u-t,0)]$: for $t=u$ the value is $C(u,v)$, for $t\ge u$ the
first term is $\ge C(u,v)$, and for $t<u$, by~\eqref{eq:incr}, $C(u,v)-C(t,v)\le u-t$.

(ii) $(M\odot C)(u,v)=\min_t[\min(t,v)+C(u,1-t)]$. For $t=0$ this equals $u$, for $t=1$ it equals
$v$, so the minimum is $\le\min(u,v)$. From below: $C(u,1-t)\ge W(u,1-t)=\max(u-t,0)$; for $t\ge u$
$\min(t,v)\ge\min(u,v)$, and for $t<u$ $\min(t,v)+u-t=\min(u,\,v+u-t)\ge\min(u,v)$. Symmetrically,
$(C\odot M)(u,v)=\min_t[C(t,v)+\min(u,1-t)]$: for $t=0$ this equals $u$, for $t=1$ it equals $v$;
from below, $C(t,v)\ge\max(t+v-1,0)$, and for $t\ge1-u$ we get $\ge(t+v-1)+(1-t)=v$, while for
$t<1-u$ we get $\ge u$.

(iii) $\min_t[tv+u(1-t)]=u+\min_t t(v-u)=\min(u,v)$.
\end{proof}

\paragraph{Discrete correspondence.} To a permutation $\sigma\in S_n$ we associate the permuton
$P_\sigma$: a square of side $1/n$ with density $n$ in place of each unit entry of the
permutation matrix, that is, on
$[\tfrac{i-1}n,\tfrac in]\times[1-\tfrac{\sigma(i)}n,\,1-\tfrac{\sigma(i)-1}n]$,
$i=1,\dots,n$ (row $i$ counted from the top). For a permuton $\mu$, a \emph{$\mu$-random
permutation} of size $n$ is the permutation $\sigma$ built from $n$ independent points with
distribution $\mu$: the point with the $i$-th smallest abscissa has the $(n+1-\sigma(i))$-th
smallest ordinate, so that the squares of $P_\sigma$ are the cells of the rank grid occupied by
the sample. The \emph{Demazure product} $\sigma\star\tau$ is the result of applying to $\sigma$,
in succession, the transpositions of a reduced word of $\tau$ with the rule ``a transposition
that decreases the inversion number is not performed''; this is the product in the $0$-Hecke
monoid. See Figure~\ref{fig:perm}.

\begin{figure}[ht]\centering
\begin{tikzpicture}[scale=0.55]
  % матрица перестановки sigma=3142, строка i сверху вниз
  \begin{scope}
    \draw[step=1,gray!50,thin] (0,0) grid (4,4);
    \draw (0,0) rectangle (4,4);
    \foreach \i/\j in {1/3,2/1,3/4,4/2} {
      \node at ({\j-0.5},{4-\i+0.5}) {$1$};
    }
    \foreach \i in {1,...,4} { \node[left] at (0,{4-\i+0.5}) {\scriptsize $i=\i$}; }
    \foreach \j in {1,...,4} { \node[above] at ({\j-0.5},4) {\scriptsize $\j$}; }
    \node at (2,-0.8) {\small matrix of $\sigma$};
  \end{scope}
  \begin{scope}[xshift=8cm]
    \draw[step=1,gray!50,thin] (0,0) grid (4,4);
    \foreach \i/\j in {1/3,2/1,3/4,4/2} {
      \fill[blue!35] ({\i-1},{4-\j}) rectangle ({\i},{4-\j+1});
    }
    \draw[step=1,gray!50,thin] (0,0) grid (4,4);
    \draw (0,0) rectangle (4,4);
    \draw[dashed,red!60] (0,4) -- (4,0);
    \node[below] at (0,0) {\scriptsize $0$}; \node[below] at (4,0) {\scriptsize $1$};
    \node[left] at (0,4) {\scriptsize $1$};
    \node[below] at (2,0) {\scriptsize $u$}; \node[left] at (0,2) {\scriptsize $v$};
    \node at (2,-0.8) {\small permuton $P_\sigma$};
  \end{scope}
\end{tikzpicture}
\caption{The permuton $P_\sigma$ for $\sigma=3142$: on the left the permutation matrix (a one in row
$i$ and column $\sigma(i)$, rows from top to bottom), on the right squares of side $\tfrac14$ and
density $4$ in the same positions. The identity permutation gives squares along the antidiagonal
(dashed), the reverse identity along the diagonal.}
\label{fig:perm}
\end{figure}

\begin{theorem}[permuton sampling theorem~\cite{HKMRS2013}]\label{thm:sampling}
If $\sigma_n$ is a $\mu$-random permutation of size $n$, then $P_{\sigma_n}\to\mu$ weakly
almost surely as $n\to\infty$.
\end{theorem}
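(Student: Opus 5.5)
We plan to reduce weak convergence of $P_{\sigma_n}$ to $\mu$ to uniform convergence of copulas. Both $P_{\sigma_n}$ and $\mu$ are permutons, so their copulas are $1$-Lipschitz in each variable by~\eqref{eq:incr}. Weak convergence of permutons is therefore equivalent to pointwise convergence of copulas on a countable dense set of points $(u,v)$, e.g.\ rational ones, and by the Lipschitz bound such convergence is automatically uniform. Since a countable intersection of almost sure events is almost sure, it suffices to show that for each fixed $(u,v)$ we have $C_{P_{\sigma_n}}(u,v)\to C_\mu(u,v)$ almost surely.

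Fix $(u,v)$ and let $X_1,\dots,X_n$ be i.i.d.\ with law $\mu$, with coordinates $(X_k^1,X_k^2)$. Because $\mu$ has uniform marginals, there are almost surely no ties, so the rank grid is well defined. By construction, the mass that $P_{\sigma_n}$ assigns to $[0,u]\times[0,v]$ differs by at most $2/n$ from $\frac1n\#\{k:\ \mathrm{rank}(X_k^1)\le nu,\ \mathrm{rank}(X_k^2)\le nv\}$. Here we must check that the ordinate convention with $1-\sigma(i)/n$ and $n+1-\sigma(i)$ makes the square of the point with the $i$-th smallest abscissa sit at the vertical position of that point's ordinate rank; this is exactly the paper's stipulation that the squares are the occupied cells of the rank grid.

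Next we replace ranks by values. Let $F_n^1$ and $F_n^2$ be the empirical distribution functions of the two coordinate samples. Then $\mathrm{rank}(X_k^1)\le nu$ holds if and only if $X_k^1\le (F_n^1)^{-1}(u)$, in the sense of the generalized inverse up to one point. By Glivenko--Cantelli applied to the uniform marginals, $(F_n^j)^{-1}(w)\to w$ uniformly in $w$ almost surely. Hence the count lies between $\frac1n\#\{k: X_k\in[0,u\mp\varepsilon]\times[0,v\mp\varepsilon]\}$ for all large $n$. By the strong law of large numbers, these quantities tend to $C_\mu(u\mp\varepsilon,v\mp\varepsilon)$. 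The Lipschitz bound gives $|C_\mu(u\pm\varepsilon,v\pm\varepsilon)-C_\mu(u,v)|\le2\varepsilon$, and letting $\varepsilon\downarrow0$ along a countable sequence yields the claim. Alternatively, we could apply the bivariate Glivenko--Cantelli theorem (finite VC dimension of rectangles) to the empirical measure, which gives uniformity directly.

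The main obstacle is the rank-to-value comparison: ranks are random and depend on the whole sample, so we need the sandwich by deterministic rectangles $[0,u\mp\varepsilon]\times[0,v\mp\varepsilon]$ supplied by uniform convergence of the empirical quantiles. Everything else is bookkeeping of $O(1/n)$ boundary errors and a countable union of null sets.
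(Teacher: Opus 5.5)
Your proof is correct, but it does not follow the source. The paper gives no proof of this statement and imports it from \cite{HKMRS2013}. There, convergence of permutations is defined through pattern densities $t(\tau,\sigma_n)$, each density is shown to concentrate exponentially around $t(\tau,\mu)$ by a bounded-differences (martingale) bound in the $n$ sample points, and Borel--Cantelli yields almost sure convergence. Convergence of all pattern densities is then related to convergence to the limit object.

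You instead work directly with copulas, which is the topology the paper actually uses (Lemma~\ref{lem:lip}). You identify $C_{P_{\sigma_n}}$ at the grid nodes with the empirical copula of the sample, and trap the random rank conditions between the deterministic rectangles $[0,u\mp\varepsilon]\times[0,v\mp\varepsilon]$ via Glivenko--Cantelli for the uniform marginals. You then apply the strong law to those fixed rectangles and upgrade countably many points to uniform convergence by the $1$-Lipschitz bound. This is the classical strong-consistency argument for the empirical copula. It is more elementary and stays in the paper's language, whereas the pattern-density route gives explicit exponential rates.

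That difference matters in one place. ``Almost surely'' presupposes a coupling of the $\sigma_n$, which the statement leaves open. Your strong-law and Glivenko--Cantelli steps tacitly use the nested coupling, with $\sigma_n$ built from the first $n$ terms of one i.i.d.\ sequence. This is enough for how the paper uses the theorem.

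To cover an arbitrary coupling (say, independent samples for different $n$), make two replacements. For each fixed rectangle $R$, replace the strong law by Hoeffding's bound $\P\{|\hat\mu_n(R)-\mu(R)|>\varepsilon\}\le2e^{-2n\varepsilon^2}$, with $\hat\mu_n$ the empirical measure. Replace Glivenko--Cantelli by the Dvoretzky--Kiefer--Wolfowitz inequality. Both tails are summable in $n$, so Borel--Cantelli gives the conclusion with no coupling assumption.
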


\begin{theorem}[Tiskin~\cite{tiskin2008semi,tiskin2015fast}; independently Pflueger~\cite{Pflueger2022}]\label{thm:tiskin}
For $\sigma,\tau\in S_n$ the functions $C_{P_\sigma}\odot C_{P_\tau}$ and $C_{P_{\sigma\star\tau}}$
coincide at the grid nodes $\{(a/n,b/n)\}_{a,b=0}^n$.
\end{theorem}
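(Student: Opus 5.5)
The statement is essentially discrete, so the plan has two stages: first reduce the continuous $(\min,+)$ product at grid nodes to a finite $(\min,+)$ product of integer count matrices, then prove the discrete identity by induction on the length of $\tau$.

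\emph{Reduction to the grid.} Fix $v=b/n$. On each column strip $[\tfrac{k-1}n,\tfrac kn]\times[0,1]$ the permuton $P_\sigma$ is a single square of side $1/n$ and density $n$. Its row $[1-\tfrac{\sigma(k)}n,1-\tfrac{\sigma(k)-1}n]$ is either entirely below $b/n$ or entirely above it. Hence $t\mapsto C_{P_\sigma}(t,b/n)$ is linear on every interval $[\tfrac{k-1}n,\tfrac kn]$.

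Likewise, for fixed $u=a/n$ the function $t\mapsto C_{P_\tau}(a/n,1-t)$ is linear on the same intervals, since $1-t$ runs over the row strips. The function minimized in \eqref{eq:boxdot} is therefore piecewise linear with breakpoints in $\tfrac1n\mathbb Z$, so its minimum is attained at some $t=k/n$. Write $D_\sigma(a,b):=n\,C_{P_\sigma}(a/n,b/n)$ for the integer count matrix (the number of $i\le a$ with $\sigma(i)>n-b$). We obtain
\[
 n\,(C_{P_\sigma}\odot C_{P_\tau})(a/n,b/n)=\min_{0\le k\le n}\big[D_\sigma(k,b)+D_\tau(a,n-k)\big]=:(D_\sigma\odot D_\tau)(a,b).
\]
So it suffices to prove $D_{\sigma\star\tau}=D_\sigma\odot D_\tau$ for integer matrices. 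The discrete product is associative, being a $(\min,+)$ matrix product after the reindexing $k\leftrightarrow n-k$. By Lemma~\ref{lem:basic}(i), whose proof goes through verbatim at grid nodes, $D_{\mathrm{id}}$ is a two-sided identity; here $\mathrm{id}$ corresponds to $W$.

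\emph{Induction on the length of $\tau$.} Take a reduced word of $\tau$ and write $\tau=\tau' s_j$ with $\ell(\tau)=\ell(\tau')+1$. By definition of the Demazure product via reduced words, $\sigma\star\tau=(\sigma\star\tau')\star s_j$. The whole argument then rests on a single one-step identity:
\[
 D_{\pi\star s_j}=D_\pi\odot D_{s_j}\qquad\text{for every }\pi\in S_n .
\]
Applying it once with $\pi=\tau'$ (where $\tau'\star s_j=\tau$, since the length goes up) gives $D_\tau=D_{\tau'}\odot D_{s_j}$. Combining this with the induction hypothesis and associativity,
\[
 D_{\sigma\star\tau}=D_{\sigma\star\tau'}\odot D_{s_j}=(D_\sigma\odot D_{\tau'})\odot D_{s_j}=D_\sigma\odot D_\tau .
\]

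\emph{The one-step identity (the main obstacle).} $D_{s_j}$ coincides with $D_{\mathrm{id}}$ except at a single entry, which is larger by one. In the product $\min_k[D_\pi(k,b)+D_{s_j}(a,n-k)]$ this perturbation lets exactly one additional value of $k$ compete, namely the one adjacent to the identity minimizer. Consequently $D_\pi\odot D_{s_j}$ agrees with $D_\pi$ everywhere except possibly in the single row or column indexed by $j$. There the new value is a minimum of $D_\pi$ at that entry and an expression built from neighbouring entries.

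A direct check then shows the following. If swapping positions $j,j+1$ of $\pi$ increases the inversion number, this minimum reproduces the count matrix of $\pi s_j$, which changes by exactly one at that entry. If the swap would decrease the inversion number, the minimum returns $D_\pi$ unchanged, matching the skipped transposition.

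I expect this step to be the main difficulty. The conventions must be tracked carefully: rows are counted from the top, $D_\tau$ is evaluated at $1-t$, and one has to decide whether $s_j$ acts on positions or on values. I would first verify the whole computation for $n=2$ to fix the signs, then do the general local $2\times2$ case analysis.
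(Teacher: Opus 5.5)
Your argument is correct, and its first half is exactly what the paper does: both terms are linear on each $[\tfrac{k-1}n,\tfrac kn]$, so the minimum sits at a grid node and everything reduces to the integer matrices $D_\sigma$. After that the routes differ. The paper simply invokes Tiskin's theorem that the dominance-sum matrix of a Demazure product is the tropical product of those of the factors. You instead prove that discrete statement by induction on $\ell(\tau)$, using associativity of the $(\min,+)$ product and a single-generator identity. The associativity is fine: precisely, $\tilde D(x,y):=D(y,n-x)$ turns $\odot$ into the ordinary $(\min,+)$ matrix product.

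The paper's route is two lines but outsources the combinatorics, including the matching of conventions and the independence of $\sigma\star\tau$ from the chosen reduced word. Yours is self-contained. Because it works along every reduced word and $D_\pi$ determines $\pi$, it proves that independence as a by-product. It also settles your convention worry: the identity holds for $s_j$ acting on positions, $\pi\mapsto\pi s_j$, whereas the action on values, $\pi\mapsto s_j\pi$, corresponds to $D_{s_j}\odot D_\pi$ instead.

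Your heuristic for the one-step identity needs fixing, though. Since $D_{s_j}\ge D_{\mathrm{id}}$ entrywise and $\odot$ is monotone, $D_\pi\odot D_{s_j}\ge D_\pi$. So the perturbation does not let a new $k$ compete; it raises the identity candidate $k=j$ by one. A minimum involving $D_\pi(j,b)$ itself, as you phrase it, could never produce the $+1$ needed in the ascent case. For $a\ne j$ nothing changes. For $a=j$, since $D_\pi(k,b)$ is nondecreasing in $k$ with unit steps, only $k\in\{j-1,j,j+1\}$ matter:
\[
(D_\pi\odot D_{s_j})(j,b)=\min\{D_\pi(j-1,b)+1,\ D_\pi(j,b)+1,\ D_\pi(j+1,b)\}=D_\pi(j-1,b)+\ind[\max(\pi(j),\pi(j+1))>n-b].
\]
This is the $(j,b)$ entry of $D_{\pi\star s_j}$, because $\pi\star s_j$ always carries the larger of $\pi(j),\pi(j+1)$ in position $j$. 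So the step closes directly, with no $n=2$ check or case split needed.
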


This is a reformulation of Tiskin's theorem that the dominance-sum matrix of a Demazure product
is the tropical product of the dominance-sum matrices of the factors: the values of $C_{P_\sigma}$
at the grid nodes are the normalized dominance sums of the matrix of $\sigma$, and the minimum
over $t\in[0,1]$ in~\eqref{eq:boxdot} is attained at grid nodes, since on each interval
$[\tfrac{j-1}n,\tfrac jn]$ both terms are linear in $t$.

\begin{lemma}[Lipschitz property of $\odot$]\label{lem:lip}
For any copulas, $\|A\odot B-A'\odot B'\|_\infty\le\|A-A'\|_\infty+\|B-B'\|_\infty$, and
$A\odot B$ is $1$-Lipschitz in each argument. In particular, $\odot$ is continuous with respect
to weak convergence of permutons.
\end{lemma}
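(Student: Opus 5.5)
The plan is to argue pointwise from Definition~\ref{def:box}, using two elementary facts: a minimum over $t$ of functions moves by at most the sup-norm of the perturbation, and the increment bounds~\eqref{eq:incr} hold for every copula.

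\emph{Step 1 (sup-norm bound).} Fix $(u,v)$. For every $t\in[0,1]$,
\[
\bigl|[A(t,v)+B(u,1-t)]-[A'(t,v)+B'(u,1-t)]\bigr|\le\|A-A'\|_\infty+\|B-B'\|_\infty .
\]
Take the minimum over $t$ on both sides. Since $|\min f-\min g|\le\sup|f-g|$, this gives the same bound for $|(A\odot B)(u,v)-(A'\odot B')(u,v)|$. Taking the supremum over $(u,v)$ yields the first claim.

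\emph{Step 2 (Lipschitz in each argument).} Again use that the minimum of a family of $L$-Lipschitz functions is $L$-Lipschitz. For fixed $t$, the function $v\mapsto A(t,v)+B(u,1-t)$ is $1$-Lipschitz in $v$ by~\eqref{eq:incr}. Likewise, $u\mapsto A(t,v)+B(u,1-t)$ is $1$-Lipschitz in $u$. Hence $A\odot B$ is $1$-Lipschitz in $u$ and in $v$ separately. (One could also state it jointly: it is $1$-Lipschitz with respect to the $\ell^1$ metric on $(u,v)$.)

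\emph{Step 3 (continuity under weak convergence).} This is the only step that is not pure bookkeeping. Suppose permutons $P_n\to P$ weakly. The marginals are uniform, so every boundary line $\{u\}\times[0,1]$ or $[0,1]\times\{v\}$ has $P$-measure zero. Hence every rectangle $[0,u]\times[0,v]$ is a $P$-continuity set, and $C_{P_n}\to C_P$ pointwise by the portmanteau theorem.

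To upgrade this to uniform convergence, note that all copulas are $1$-Lipschitz in each variable by~\eqref{eq:incr}, so the family is equicontinuous. Pointwise convergence of an equicontinuous family on the compact square $[0,1]^2$ is uniform; concretely, one checks a finite $\varepsilon$-grid and interpolates using the Lipschitz bound.

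Conversely, a distribution function determines the measure. Together with tightness on $[0,1]^2$, uniform convergence of copulas is therefore equivalent to weak convergence of the permutons. Combining this with Step 1:
\[
P_n\to P,\ Q_n\to Q\ \text{weakly}\quad\Longrightarrow\quad \|C_{P_n}\odot C_{Q_n}-C_P\odot C_Q\|_\infty\to0 .
\]
This means $P_n\boxdot Q_n\to P\boxdot Q$ weakly, interpreted through copulas; whether these products are genuine permutons is addressed separately in Corollary~\ref{cor:limit}.
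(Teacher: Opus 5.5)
Your proof is correct and takes essentially the same route as the paper. You use the same three ingredients: the per-$t$ perturbation bound carried through the minimum over $t$, the $1$-Lipschitz property in $u$ and $v$ inherited from $B$ and $A$ (a minimum of $1$-Lipschitz functions is $1$-Lipschitz), and the fact that weak convergence of permutons is pointwise convergence of copulas, which equicontinuity upgrades to uniform convergence. Your Step 3 only spells out the portmanteau and subsequence details the paper compresses into one sentence, and your caveat that the product is known to be a permuton only after Corollary~\ref{cor:limit} matches the paper's own remark.
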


\begin{proof}
For each $t$ the expression under the minimum in~\eqref{eq:boxdot} changes by at most
$\|A-A'\|_\infty+\|B-B'\|_\infty$, hence so does the minimum over $t$; the Lipschitz property
in $u$ and in $v$ is inherited from $B$ and $A$ in the same way. Copulas are $1$-Lipschitz in
each argument, so their pointwise convergence is equivalent to uniform convergence, and for
permutons pointwise convergence of the distribution functions is weak convergence.
\end{proof}

\begin{corollary}[limit of Demazure products]\label{cor:limit}
If $u_n,v_n\in S_n$ and $P_{u_n}\to A$, $P_{v_n}\to B$ weakly, then $P_{u_n\star v_n}$ converges
weakly to the permuton with copula $C_A\odot C_B$. In particular, $C_A\odot C_B$ is the copula
of a permuton, and $\boxdot$ is defined on all permutons.
\end{corollary}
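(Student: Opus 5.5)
The plan is to combine Theorem~\ref{thm:tiskin}, which gives exact agreement at the grid nodes, with the Lipschitz estimate of Lemma~\ref{lem:lip} and the fact that all functions involved are $1$-Lipschitz in each argument. Write $A_n:=C_{P_{u_n}}$, $B_n:=C_{P_{v_n}}$ and $D_n:=C_{P_{u_n\star v_n}}$. By hypothesis $P_{u_n}\to A$ and $P_{v_n}\to B$ weakly. As noted in the proof of Lemma~\ref{lem:lip}, for copulas this is equivalent to uniform convergence, so $\|A_n-C_A\|_\infty\to0$ and $\|B_n-C_B\|_\infty\to0$. Lemma~\ref{lem:lip} then gives
\[
\|A_n\odot B_n-C_A\odot C_B\|_\infty\le\|A_n-C_A\|_\infty+\|B_n-C_B\|_\infty\to0 .
\]

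Next I compare $D_n$ with $A_n\odot B_n$. By Theorem~\ref{thm:tiskin} they coincide at the grid nodes $(a/n,b/n)$. Both functions are $1$-Lipschitz in each argument: $D_n$ because it is a copula, by~\eqref{eq:incr}, and $A_n\odot B_n$ by Lemma~\ref{lem:lip}. Any point $(u,v)\in[0,1]^2$ lies within $1/n$ of a grid node in each coordinate. Moving from $(u,v)$ to that node changes each function by at most $2/n$, and the two functions agree at the node. Hence $\|D_n-A_n\odot B_n\|_\infty\le4/n$, and therefore $D_n\to C_A\odot C_B$ uniformly.

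It remains to identify the limit as the copula of a permuton and to conclude weak convergence. The space of probability measures on the compact square $[0,1]^2$ is weakly compact, and uniform marginals are preserved under weak limits. So every subsequence of $P_{u_n\star v_n}$ has a further subsequence converging weakly to some permuton $Q$. Weak convergence to a permuton with continuous marginals gives pointwise convergence of distribution functions at every point, since the boundary of $[0,u]\times[0,v]$ has $Q$-measure zero. Therefore $C_Q=C_A\odot C_B$. A distribution function determines the measure, so $Q$ is the same for every subsequence, and the whole sequence converges weakly to this $Q$.

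This argument also proves the ``in particular'' part: $C_A\odot C_B$ is the copula of a permuton, provided that $A$ and $B$ arise as such limits. By Theorem~\ref{thm:sampling}, every permuton is almost surely the weak limit of $P_{\sigma_n}$ for $\mu$-random permutations $\sigma_n$. Choosing such approximating sequences for arbitrary permutons $A$ and $B$ shows that $\boxdot$ is defined on all permutons.

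The only delicate step is the compactness and identification argument. Uniform convergence of $D_n$ alone does not say that the limit is a permuton; that conclusion needs Prokhorov compactness together with the boundary-null property coming from the uniform marginals. Everything else is direct bookkeeping with the $1$-Lipschitz bounds.
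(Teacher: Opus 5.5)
Your proof is correct and follows the same route as the paper. Agreement at the grid nodes (Theorem~\ref{thm:tiskin}) plus the $1$-Lipschitz bounds give $\|D_n-A_n\odot B_n\|_\infty=O(1/n)$; the paper gets $2/n$ by using the nearest node, and your $4/n$ works equally well. Lemma~\ref{lem:lip} then gives uniform convergence to $C_A\odot C_B$, and the limit is a permuton by compactness with uniform marginals preserved, which you spell out more explicitly than the paper's one-line remark.
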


\begin{proof}
The functions $C_{P_{u_n}}\odot C_{P_{v_n}}$ and $C_{P_{u_n\star v_n}}$ are $1$-Lipschitz in each
argument (Lemma~\ref{lem:lip}) and coincide at the nodes of the grid with spacing $1/n$
(Theorem~\ref{thm:tiskin}), hence they differ by at most $2/n$; by Lemma~\ref{lem:lip} the first
of them converges uniformly to $C_A\odot C_B$. A weak limit of permutons is a permuton, since
uniformity of the marginals is preserved.
\end{proof}

\begin{remark}[monoid]\label{rem:monoid}
By Corollary~\ref{cor:limit} and Theorem~\ref{thm:sampling}, $\boxdot$ is an operation on all
permutons, and it is associative because the Demazure product is; together with
Lemma~\ref{lem:basic} this makes it a monoid with identity element $W$ and absorbing element
$M$, a continuous analogue of the $0$-Hecke monoid~\cite{PresnyakovThesis,PresnyakovTiskin,Defant2025}. In
general $\boxdot$ is not commutative. The notations $\star$ and $\boxdot$, like the names
``Demazure product'' and ``sticky product'', refer to essentially the same operation; we write
$\star$ for permutations and $\boxdot$ for permutons, and the difference in names is historical.
Neither closedness nor associativity is used below: every product is computed explicitly, and
within a geodesic the product reduces to addition of parameters.
\end{remark}

\section{The projection family $C^\rho$}\label{sec:family}

Throughout, $\rho$, the \emph{profile} of the family, is an \emph{absolutely continuous} ($\rho\ll\Leb$) \emph{rotation-invariant}
(invariant under $SO(2)$) probability measure on $\R^2$, $G=(G_1,G_2)\sim\rho$,
$u_\theta=(\cos\theta,\sin\theta)$, and $X_\theta:=\langle G,u_\theta\rangle$. In polar
coordinates $G=R\,(\cos\Theta,\sin\Theta)$, with $R>0$ almost surely.

\begin{lemma}[projections]\label{lem:proj}
\begin{enumerate}
  \item[\rm(i)] All $X_\theta$ have the same distribution function $F$; it is continuous and
    symmetric, $F(-t)=1-F(t)$. In particular $F^{-1}(\tfrac12)=0$ and $F^{-1}(1-p)=-F^{-1}(p)$,
    where $F^{-1}$ is the quantile function.
  \item[\rm(ii)] The joint distribution of the pair $(X_\theta,X_{\theta'})$ depends only on
    $\theta'-\theta$.
  \item[\rm(iii)] The direction $\Theta$ is uniformly distributed on $[0,2\pi)$.
\end{enumerate}
\end{lemma}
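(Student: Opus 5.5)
All three parts follow from rotation invariance of $\rho$ combined with absolute continuity. Let $R_\alpha\in SO(2)$ be rotation by $\alpha$. Since $\langle R_\alpha^{-1}G,u_\theta\rangle=\langle G,R_\alpha u_\theta\rangle=X_{\theta+\alpha}$ and $R_\alpha^{-1}G\eqd G$, the whole process $(X_\theta)_\theta$ is stationary in $\theta$: for any finite set of angles, $(X_{\theta_1+\alpha},\dots,X_{\theta_k+\alpha})\eqd(X_{\theta_1},\dots,X_{\theta_k})$. This gives (ii) at once, taking $\alpha=-\theta$ to reduce $(X_\theta,X_{\theta'})$ to $(X_0,X_{\theta'-\theta})$. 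It also gives the first claim of (i) with $k=1$. For symmetry, rotation by $\pi$ is in $SO(2)$ and sends $G\mapsto -G$, so $X_\theta\eqd -X_\theta$.

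For continuity of $F$ I would use absolute continuity. The event $\{X_\theta=t\}$ is the event that $G$ lies on the line $\{x:\langle x,u_\theta\rangle=t\}$, which has Lebesgue measure zero, so its $\rho$-probability vanishes. Hence $F$ has no atoms. Combined with symmetry this gives $F(-t)=\P\{X\le -t\}=\P\{X\ge t\}=1-F(t)$.

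The quantile statements need a little care, since $F$ may have flat stretches, for instance when $\rho$ vanishes near the origin. With $F^{-1}(p):=\inf\{t:F(t)\ge p\}$, I would show $F^{-1}(\tfrac12)=0$ by checking two things: $F(0)=\tfrac12$ from $F(0)=1-F(0)$, and $F(t)<\tfrac12$ for $t<0$. For the second, note that $F(t)=\tfrac12$ for some $t<0$ would force $\P\{t<X\le 0\}=0$, and by symmetry $X$ would also put no mass in $[0,-t)$. Then $G$ would avoid an open strip around the line $\langle x,u_\theta\rangle=0$, so $\rho$ would give zero mass to a neighbourhood of the origin, and by rotation invariance zero mass to some disk $\{|x|<r\}$. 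This is entirely possible, so strictly speaking $F^{-1}(\tfrac12)$ could be the left endpoint of a flat interval.

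This is the main obstacle: the intended meaning must be that the quantile is chosen symmetrically, for example as the midpoint convention, or that the identities hold for a.e. $p$, which is all that matters for copulas. I would state $F^{-1}(1-p)=-F^{-1}(p)$ for all $p$ at which $F^{-1}$ is continuous, that is, off the countable set of flat levels. At those $p$ the identity follows from $F(-t)=1-F(t)$ by taking generalized inverses. I would remark that it holds everywhere under the symmetric convention, and that for $p=\tfrac12$ any choice in the flat interval $[-r,r]$ is symmetric, with the midpoint convention giving $0$.

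For (iii), write $G=R(\cos\Theta,\sin\Theta)$, which is well defined a.s. since $R>0$ a.s. by absolute continuity, the origin being a null set. Rotation by $\alpha$ maps $\Theta\mapsto\Theta+\alpha \pmod{2\pi}$, so the law of $\Theta$ on the circle is invariant under all rotations. The only rotation-invariant probability on the circle is the uniform one. To see this, the Fourier coefficients satisfy $\hat\mu(k)=e^{ik\alpha}\hat\mu(k)$ for all $\alpha$, so $\hat\mu(k)=0$ for $k\neq0$.
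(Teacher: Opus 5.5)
Your arguments for stationarity (hence (ii) and the equidistribution in (i)), symmetry, continuity of $F$, and (iii) are correct and essentially the paper's. The gap is in the quantile claims. You conclude that $F$ may have flat stretches at levels in $(0,1)$ and that $F^{-1}(\tfrac12)=0$ can fail for the usual quantile. You then retreat to a midpoint convention, or to the identity off a countable set of levels. That weakening is unnecessary, and it means you do not prove the statement as written.

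The faulty step is passing from ``$G$ avoids the strip $\{|\langle z,u_\theta\rangle|<-t\}$'' to ``$\rho$ vanishes near the origin'' before you invoke rotation invariance; that discards most of the information. Apply rotation invariance to the strip itself. Every rotated strip $\{z:|\langle z,u_\alpha\rangle|<-t\}$ is $\rho$-null. Countably many of them cover the plane, since every $z$ is orthogonal to some $u_\alpha$ and $\alpha\mapsto\langle z,u_\alpha\rangle$ is continuous. Hence $\rho=0$, a contradiction. Your intended example fails too. For $\rho$ uniform on an annulus $\{r\le|z|\le1\}$, the points $(0,\pm y)$ with $r\le y\le1$ project to $0$, and the marginal has positive density on all of $(-1,1)$. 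This is why $F_r$ is a homeomorphism in Proposition~\ref{prop:continuum}.

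The same covering argument supplies the missing idea in general, which the paper states as ``$F$ is strictly increasing wherever $0<F<1$''. Suppose $\P\{t<X_0<t'\}=0$ with $t<t'$. The rotated strips cover every $z$ with $|z|>\max(t,-t')$. So either $R\le t$ a.s., giving $F(t)=1$, or $R\le -t'$ a.s., giving $F(t')=0$. The paper phrases this via $X_0=R\cos\Theta$: given $R=r$, the variable $r\cos\Theta$ has a positive density on $(-r,r)$.

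Thus $F$ restricts to a homeomorphism of a symmetric interval $(-b,b)$, $0<b\le\infty$, onto $(0,1)$, and the generalized inverse is its genuine inverse there. Then $F(-t)=1-F(t)$ gives $F^{-1}(\tfrac12)=0$ and $F^{-1}(1-p)=-F^{-1}(p)$ for every $p\in(0,1)$, with no exceptional levels and no choice of convention. Symmetry alone would not suffice, since a symmetric continuous $F$ can have flat stretches. Rotation invariance must be used a second time, and that is precisely what your proposal omits.
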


\begin{proof}
For a rotation $R_\omega$ we have $X_{\theta+\omega}=\langle R_{-\omega}G,u_\theta\rangle$ and
$R_{-\omega}G\eqd G$, hence $(X_{\theta+\omega},X_{\theta'+\omega})\eqd(X_\theta,X_{\theta'})$
--- this is (ii), and for $\theta'=\theta$ it gives the equidistribution of all $X_\theta$. From
$u_{\theta+\pi}=-u_\theta$ it follows that $-X_\theta\eqd X_\theta$, that is, $F$ is symmetric.
The projection of an absolutely continuous measure onto a line is absolutely continuous, so $F$ is
continuous; moreover, for each $R=r>0$ the variable $r\cos\Theta$ has a positive density on
$(-r,r)$, so that $F$ is strictly increasing wherever $0<F<1$, and $F^{-1}(\tfrac12)=0$. Finally,
the distribution of $\Theta$ is a probability measure on the circle invariant under all rotations,
that is, the uniform one.
\end{proof}

\begin{definition}[projection copula]\label{def:proj}
For $\beta\in[0,\pi]$ put
\[
  C_\beta^\rho(u,v):=\Prob{X_0\le F^{-1}(u),\ -X_\beta\le F^{-1}(v)},\qquad (u,v)\in[0,1]^2 .
\]
\end{definition}

This is the copula of the pair $(X_0,-X_\beta)$ --- the projections onto $u_0$ and onto the ray
$-u_\beta$, which makes the angle $\pi-\beta$ with $u_0$. The family
$\{C^\rho_\beta\}_{\beta\in[0,\pi]}$ is denoted by $C^\rho$.

\begin{remark}[invariance of the copula]\label{rem:inv}
The copula of a pair of random variables does not change under strictly increasing
transformations of each of them. In particular, the copula is unchanged under the homothety
$G\mapsto cG$, $c>0$, so the family $C^\rho$ depends on $\rho$ only up to scale.
\end{remark}

\begin{lemma}[$C_\beta^\rho$ is a permuton; endpoints of the family]\label{lem:perm}
$C_\beta^\rho$ is the copula of a permuton, $C_0^\rho=W$ and $C_\pi^\rho=M$.
\end{lemma}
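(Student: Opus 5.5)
The plan is to realize $C^\rho_\beta$ as the joint law of $(F(X_0),F(-X_\beta))$ and then compute the two endpoint cases directly.

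\emph{The permuton.} By Lemma~\ref{lem:proj}(i), $F$ is continuous and $-X_\beta\eqd X_\beta\eqd X_0$. Hence $U:=F(X_0)$ and $V:=F(-X_\beta)$ are both uniform on $[0,1]$ (probability integral transform). Let $P$ be the law of $(U,V)$; it is a probability measure on $[0,1]^2$ with uniform marginals, that is, a permuton. It remains to check that $C_P=C^\rho_\beta$. We need $\{F(X)\le u\}=\{X\le F^{-1}(u)\}$ up to null sets, where $F^{-1}(u)=\inf\{x:F(x)\ge u\}$. The inclusion $\supseteq$ follows from monotonicity together with $F(F^{-1}(u))=u$, which holds by continuity. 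For $\subseteq$, the difference is contained in $\{F^{-1}(u)<X,\ F(X)=u\}$, that is, $X$ lies in an interval on which $F$ is constant. This event has probability zero.

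The endpoints $u\in\{0,1\}$ need a separate check. For $u=1$ one may have $F^{-1}(1)=+\infty$, and the identity still holds with the obvious convention. For $u=0$, $F^{-1}(0)=-\infty$ (or the left end of the support), and both sides have probability $0$. Applying this in each coordinate gives $P([0,u]\times[0,v])=C^\rho_\beta(u,v)$.

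\emph{The endpoints of the family.} For $\beta=0$ we have $-X_0$ in the second coordinate. Since $F(-x)=1-F(x)$, this gives $V=1-U$ almost surely. The law of $(U,1-U)$ is uniform on the antidiagonal, and its distribution function is $\Prob{U\le u,\ 1-U\le v}=\max(u+v-1,0)=W$. For $\beta=\pi$ we have $-X_\pi=X_0$, so $V=U$, and $\Prob{U\le\min(u,v)}=\min(u,v)=M$.

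The only delicate point is the quantile bookkeeping: flat pieces of $F$ (where $\rho$ may give no mass to some annulus) and infinite quantiles at $u\in\{0,1\}$. Both are handled by the null-set argument above, using the symmetry and continuity from Lemma~\ref{lem:proj}(i).
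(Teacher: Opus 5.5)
Your proof is correct and follows the paper's argument. Like the paper, you realize $C^\rho_\beta$ as the distribution function of $(F(X_0),F(-X_\beta))$ using the null-set identification $\{X\le F^{-1}(u)\}=\{F(X)\le u\}$, then read off $V=1-U$ at $\beta=0$ and $V=U$ at $\beta=\pi$; you simply supply more of the quantile bookkeeping. One small inaccuracy: interior flat pieces of $F$ cannot occur even when $\rho$ vanishes on an annulus, since by Lemma~\ref{lem:proj}(i) $F$ is strictly increasing wherever $0<F<1$. Your null-set argument is valid regardless, so this does not affect the proof.
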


\begin{proof}
By Lemma~\ref{lem:proj}(i), $-X_\beta$ has distribution function $F$, and for continuous $F$ the
events $\{X\le F^{-1}(u)\}$ and $\{F(X)\le u\}$ coincide up to a set of measure zero. Therefore
$C_\beta^\rho$ is the distribution function of the pair $(F(X_0),F(-X_\beta))$ with uniform
marginals. For $\beta=0$: $F(-X_0)=1-F(X_0)$, and
$C_0^\rho(u,v)=\Prob{1-v\le F(X_0)\le u}=\max(u+v-1,0)=W$. For $\beta=\pi$: $-X_\pi=X_0$, the pair
lies on the diagonal, and $C_\pi^\rho=\min(u,v)=M$.
\end{proof}

\begin{remark}[correlation]\label{rem:corr}
If $\rho$ has finite variance $\sigma^2$, then by rotation invariance
$\operatorname{cov}(G)=\sigma^2I_2$ and $\corr(X_0,-X_\beta)=-\cos\beta$ for \emph{all} radial
profiles. The families differ not in their correlation but in the full copula.
\end{remark}

\begin{example}[the Gaussian and Archimedean families]\label{ex:families}
(a) Let $\cN:=\cN(0,I_2)$. The pair $(X_0,-X_\beta)$ is Gaussian with standard marginals and
correlation $-\langle u_0,u_\beta\rangle=-\cos\beta$, hence by Remark~\ref{rem:inv}
\[
  C^{\cN}_\beta=C_{-\cos\beta},\qquad C_r(u,v):=\Phi_r\big(\Phi^{-1}(u),\Phi^{-1}(v)\big),
\]
where $C_r$ is the standard Gaussian copula with correlation $r\in[-1,1]$, $\Phi$ is the
standard normal distribution function, and $\Phi_r$ is the bivariate one with correlation $r$;
in the limiting cases $C_{-1}=W$, $C_0=\Unif$, $C_1=M$~\cite{Nelsen2006}. At $\beta=\pi/2$ the
correlation vanishes and $C^{\cN}_{\pi/2}=\Unif$: the Gaussian family passes through
independence exactly at its midpoint. For $\beta\in(0,\pi)$ the permuton $C^{\cN}_\beta$ has a
positive density on the whole open square.

(b) The \emph{Archimedean measure} $\Arch$ is the projection of the uniform measure on the
sphere $S^2$ onto the equatorial disk; its density on the unit disk is
$\frac{1}{2\pi\sqrt{1-a^2-b^2}}$. It is rotation-invariant and absolutely continuous (the
singularity of the density at the boundary circle is integrable), and all its projections are
uniform on $[-1,1]$ (Archimedes' theorem), so that $F(x)=(x+1)/2$ and the permuton with copula
$C^{\Arch}_\beta$ is an affine image of the distribution of the pair $(X_0,-X_\beta)$,
supported in an ellipse inscribed in the square. In \S\ref{sec:rsn} we discuss the relationship
of this family with random sorting networks.

Fig.~\ref{fig:families} shows samples from both families at
$\beta=0,\tfrac\pi4,\tfrac\pi2,\tfrac{3\pi}4,\pi$: both morph from the antidiagonal $W$ into
the diagonal $M$, but only the Gaussian one passes through the uniform distribution.
\end{example}

\begin{figure}[ht]\centering
\includegraphics[width=\textwidth]{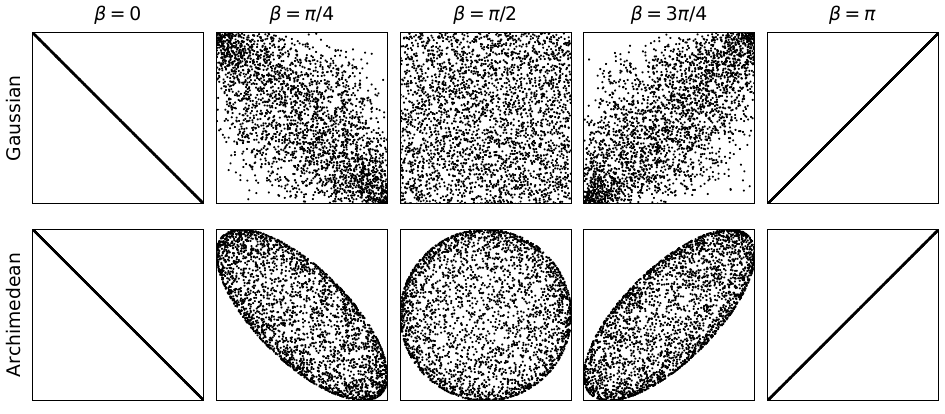}
\caption{Samples of $3000$ points from the permutons $C^{\cN}_\beta$ (top row) and
$C^{\Arch}_\beta$ (bottom row) at $\beta=0,\tfrac\pi4,\tfrac\pi2,\tfrac{3\pi}4,\pi$.
In both rows the antidiagonal morphs into the diagonal: the Gaussian family passes through the
uniform distribution at $\beta=\pi/2$, the Archimedean one stays supported in ellipses, at
$\beta=\pi/2$ in the inscribed disk. The sorting network of \S\ref{sec:rsn} traverses the
bottom row from right to left.}
\label{fig:families}
\end{figure}

\section{Angle addition}\label{sec:main}

\begin{theorem}[angle addition]\label{thm:main}
Let $\rho$ be an absolutely continuous rotation-invariant probability measure on $\R^2$.
Then for all $\beta_1,\beta_2\in[0,\pi]$
\[
  \boxed{\ C_{\beta_1}^\rho\odot C_{\beta_2}^\rho=C_{\min(\beta_1+\beta_2,\,\pi)}^\rho\ }
\]
as copulas (that is, as measures) on $[0,1]^2$.
\end{theorem}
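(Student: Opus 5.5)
The plan is to translate every term of the tropical product~\eqref{eq:boxdot} into the $\rho$-measure of an intersection of half-planes, and to use rotation invariance (Lemma~\ref{lem:proj}(ii)) to place all the half-planes in one common picture.

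\textbf{Setup.} Fix $(u,v)$ and put $a=F^{-1}(u)$, $b=F^{-1}(v)$. For $t\in[0,1]$ put $c=F^{-1}(t)$, so that $F^{-1}(1-t)=-c$ by Lemma~\ref{lem:proj}(i). Write $\beta=\beta_1+\beta_2$ and introduce
\[
A=\{X_0\le a\},\qquad B=\{-X_{\beta}\le b\},\qquad H^\pm=\{\pm(X_{\beta_2}-c)\ge0\}.
\]
By Definition~\ref{def:proj},
\[
C_{\beta_2}^\rho(u,1-t)=\Prob{X_0\le a,\ X_{\beta_2}\ge c}=\P(A\cap H^+).
\]
Rotating by $\beta_2$, which preserves joint laws by Lemma~\ref{lem:proj}(ii),
\[
C^\rho_{\beta_1}(t,v)=\Prob{X_{\beta_2}\le c,\ -X_{\beta_2+\beta_1}\le b}=\P(B\cap H^-).
\]
So the function being minimized equals $\P(A\cap H^+)+\P(B\cap H^-)$. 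Here $H^\pm$ are the two sides of a line $L_c=\{X_{\beta_2}=c\}$, and $L_c$ is null because $\rho\ll\Leb$.

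\textbf{Case $\beta\le\pi$.}
\begin{itemize}
\item \emph{Lower bound.} Since $A\cap B\cap H^+\subseteq A\cap H^+$ and $A\cap B\cap H^-\subseteq B\cap H^-$, the sum is at least $\P(A\cap B)=C^\rho_{\beta}(u,v)$ for every $t$. This holds for all $\beta\le\pi$, and also for $\beta>\pi$ with $B$ defined as above.
\item \emph{Upper bound.} I must choose $c$ so that the two discarded sets $(A\setminus B)\cap H^+$ and $(B\setminus A)\cap H^-$ are null. For $\beta<\pi$ the boundary lines of $A$ and $B$ have normals $u_0$ and $-u_\beta$, which are not parallel, so they meet at a vertex $p$. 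I take $c=\langle p,u_{\beta_2}\rangle$, i.e.\ $L_c$ passes through $p$. Then $t=F(c)\in[0,1]$ is admissible.
\item \emph{Main obstacle: the angular check.} I need the direction $u_{\beta_2}$, lying between $u_0$ and $u_\beta$ since $0\le\beta_2\le\beta$, to make $L_c$ separate the wedge $A\setminus B$ from $B\setminus A$. Concretely, the claim is that $A\setminus B$ lies in the closed side $\{X_{\beta_2}\le c\}$ and $B\setminus A$ in $\{X_{\beta_2}\ge c\}$. Each of these wedges has apex $p$, and it is bounded by rays along the two boundary lines. So the check reduces to computing the signs of $\langle\cdot,u_{\beta_2}\rangle$ on the four boundary rays, which are parallel to $u_0^\perp$ and $u_\beta^\perp$. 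I would do this in coordinates, taking care with orientation.
\item \emph{Boundary value $\beta=\pi$.} Here the lines are parallel. I would pass to the limit $\beta\uparrow\pi$ instead. The map $\beta\mapsto C^\rho_\beta$ is pointwise continuous because $\rho\ll\Leb$ and half-planes vary continuously. Lemma~\ref{lem:lip} then gives continuity of the product, so the identity extends to $\beta=\pi$.
\end{itemize}

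\textbf{Case $\beta>\pi$.} I avoid geometry and use monotonicity.
\begin{itemize}
\item \emph{Monotonicity of $\odot$.} If $C\le C'$ pointwise, then $C\odot D\le C'\odot D$, since both are minima of pointwise ordered expressions; the same holds in the second argument.
\item \emph{Monotonicity in $\beta$.} From the settled case, for $\beta+\delta\le\pi$,
\[
C^\rho_{\beta+\delta}=C^\rho_\delta\odot C^\rho_\beta\ge W\odot C^\rho_\beta=C^\rho_\beta,
\]
using~\eqref{eq:frechet} and Lemma~\ref{lem:basic}(i).
\item \emph{Conclusion.} Since $\beta_1>\pi-\beta_2$, we get
\[
C^\rho_{\beta_1}\odot C^\rho_{\beta_2}\ \ge\ C^\rho_{\pi-\beta_2}\odot C^\rho_{\beta_2}=C^\rho_\pi=M.
\]
For the reverse inequality, the choices $t=0$ and $t=1$ in~\eqref{eq:boxdot} give values $u$ and $v$, so every product is at most $M$. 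Hence the product equals $M=C^\rho_\pi$.
\end{itemize}
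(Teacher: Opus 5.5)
Your proof is correct. In the interior regime $\beta_1+\beta_2<\pi$ it is the paper's argument rotated by $\beta_2$: your $X_0$, $X_{\beta_2}$, $-X_\beta$ are the paper's $A$, $B$, $C$. Your lower bound is the first identity of Lemma~\ref{lem:atoms}, and the angular check you defer is Lemma~\ref{lem:sides}(a).

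That check closes in one line. For $0<\beta<\pi$ one has $u_{\beta_2}=\frac{\sin\beta_1}{\sin\beta}\,u_0+\frac{\sin\beta_2}{\sin\beta}\,u_\beta$ with nonnegative coefficients. Hence $X_{\beta_2}-c$ is a nonnegative combination of $X_0-a$ and $X_\beta+b$, which is $\le0$ on $A\setminus B$ and $\ge0$ on $B\setminus A$. One small slip: at $\beta=0$ your two boundary lines are parallel, contrary to what you state. That case is simply Lemma~\ref{lem:basic}(i).

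You genuinely depart from the paper in the regimes $\beta_1+\beta_2\ge\pi$. The paper stays geometric, using the same three identities:
\begin{itemize}
\item for $\beta_1+\beta_2=\pi$ the lines $\{A=x\}$ and $\{C=y\}$ coincide, so one mixed sector is empty;
\item for $\beta_1+\beta_2>\pi$, Lemma~\ref{lem:sides}(b) shows the mixed sectors swap sides, and the second or third identity of Lemma~\ref{lem:atoms} gives $g(s)\ge\min(u,v)$ for every $s$.
\end{itemize}
You instead handle these regimes as follows:
\begin{itemize}
\item you get $\beta=\pi$ by a limit, using continuity of $\beta\mapsto C^\rho_\beta$ (dominated convergence, which needs only continuity of $F$) together with Lemma~\ref{lem:lip};
\item you get the absorption regime with no geometry at all. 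Monotonicity of $\odot$ in each argument, plus $W$ being both the least element and the identity, makes $\beta\mapsto C^\rho_\beta$ nondecreasing. Squeezing $C^\rho_{\beta_1}\odot C^\rho_{\beta_2}\ge C^\rho_{\pi-\beta_2}\odot C^\rho_{\beta_2}=M$ against the trivial bound $\odot\le M$ then finishes.
\end{itemize}

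Your route shows that absorption is a formal consequence of the additive law for sums $\le\pi$ together with $C_\pi=M$, the Fr\'echet bounds and Lemma~\ref{lem:basic}. It would therefore transfer to any family known to satisfy the law below $\pi$. The paper's route is self-contained and uniform across all three regimes, and it needs no limiting argument in $\beta$.
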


Both sides are continuous on $[0,1]^2$, so it suffices to verify the equality at interior points
$(u,v)\in(0,1)^2$. For $\beta_m=0$ the claim is Lemma~\ref{lem:basic}(i)
($C_0^\rho=W$), for $\beta_m=\pi$ it is Lemma~\ref{lem:basic}(ii) ($C_\pi^\rho=M$). From now on
$\beta_1,\beta_2\in(0,\pi)$ and $(u,v)\in(0,1)^2$ are fixed; denote the quantiles $x:=F^{-1}(u)$,
$y:=F^{-1}(v)$; by continuity of $F$, $x\le y$ is equivalent to $u\le v$.

\emph{Plan of the proof.} In \S\ref{sec:reduction} the product is reduced to the minimization
over $s\in\R$ of a function $g(s)$, and the three events involved are realized as half-planes
determined by a single random vector $G$; three identities express $g$ through the
probabilities of the four pieces into which the line $\{B=s\}$ cuts the two angular sectors
between the other two lines. In \S\ref{sec:geometry} we determine on which side of that line
the sectors lie: for $\beta_1+\beta_2<\pi$ the minimum is attained at a single point and equals
$C^\rho_{\beta_1+\beta_2}(u,v)$, while for $\beta_1+\beta_2\ge\pi$ it is not attained: the
infimum equals $M(u,v)$ and is approached as $s\to\pm\infty$.

\subsection{Reduction to three lines}\label{sec:reduction}

We realize both factors by projections of \emph{one} vector $G\sim\rho$. Put
\begin{equation}\label{eq:triple}
  A:=X_{-\beta_2},\qquad B:=X_0,\qquad C:=-X_{\beta_1}=X_{\beta_1+\pi};
\end{equation}
all three variables have distribution function $F$. The events $\{A\le x\}$, $\{B\le s\}$,
$\{C\le y\}$ are half-planes with boundary lines $\{A=x\}$, $\{B=s\}$, $\{C=y\}$ and
outward normals $u_{-\beta_2}$, $u_0$, $u_{\beta_1+\pi}$ (Fig.~\ref{fig:wedges}).

\begin{lemma}[reduction]\label{lem:reduce}
For $s\in\R$ put
\begin{equation}\label{eq:g}
  g(s):=\Prob{B\le s,\ C\le y}+\Prob{A\le x,\ B\ge s}.
\end{equation}
Then $g(s)=C_{\beta_1}^\rho(F(s),v)+C_{\beta_2}^\rho(u,F(-s))$ for all $s$, and
\[
  (C_{\beta_1}^\rho\odot C_{\beta_2}^\rho)(u,v)=\inf_{s\in\R}g(s)\ \le\ \min(u,v).
\]
If $\beta_1+\beta_2\le\pi$, then moreover $\Prob{A\le x,\ C\le y}=C_{\beta_1+\beta_2}^\rho(u,v)$.
\end{lemma}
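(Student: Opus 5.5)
The plan is to reduce everything to the rotation invariance in Lemma~\ref{lem:proj}(ii) and the symmetry of $F$ in Lemma~\ref{lem:proj}(i), together with the change of variable $t=F(s)$ in \eqref{eq:boxdot}.

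\emph{First term of $g$.} By Lemma~\ref{lem:proj}(ii), the pair $(B,C)=(X_0,-X_{\beta_1})$ is exactly the pair in Definition~\ref{def:proj} with $\beta=\beta_1$. Hence
\[
\Prob{B\le s,\ C\le y}=C^\rho_{\beta_1}(F(s),v).
\]
Here I use that $\{X_0\le s\}$ and $\{X_0\le F^{-1}(F(s))\}$ differ by a null set, since $F$ is continuous and $X_0$ has no atoms.

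\emph{Second term of $g$.} Since $B$ has no atoms, $\{B\ge s\}$ may be replaced by $\{-B\le -s\}$. The pair $(A,-B)=(X_{-\beta_2},-X_0)$ is obtained from $(X_0,-X_{\beta_2})$ by rotating by $-\beta_2$, so by Lemma~\ref{lem:proj}(ii) it has the same law. Therefore
\[
\Prob{A\le x,\ B\ge s}=C^\rho_{\beta_2}(u,F(-s)).
\]

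\emph{Infimum formula.} Substitute $t=F(-s)=1-F(s)$ into \eqref{eq:boxdot}, after swapping the role of the variable. As $s$ ranges over $\R$, $F(s)$ ranges over $(0,1)$, with the endpoints $0$ and $1$ possibly attained only on intervals where $F$ is constant; those intervals do not change the value of $g$. The minimand in \eqref{eq:boxdot}, namely $C_{\beta_1}(t,v)+C_{\beta_2}(u,1-t)$ with $t=F(s)$, is continuous on $[0,1]$. So the minimum over $[0,1]$ equals the infimum over $(0,1)$, which is $\inf_s g(s)$.

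\emph{Upper bound $\min(u,v)$.} As $s\to-\infty$, the value $g(s)\to 0+C^\rho_{\beta_2}(u,1)=u$. As $s\to+\infty$, $g(s)\to v+0=v$. Hence $\inf_s g\le\min(u,v)$.

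\emph{Last claim, for $\beta_1+\beta_2\le\pi$.} Here $(A,C)=(X_{-\beta_2},-X_{\beta_1})$. Rotating by $+\beta_2$ gives the same law as $(X_0,-X_{\beta_1+\beta_2})$. Since $\beta_1+\beta_2\in[0,\pi]$, Definition~\ref{def:proj} applies and
\[
\Prob{A\le x,\ C\le y}=C^\rho_{\beta_1+\beta_2}(u,v).
\]

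\emph{Main obstacle.} There is no real difficulty; the only care needed is bookkeeping. One has to check that $\{X\le F^{-1}(u)\}=\{F(X)\le u\}$ almost surely, including when $F$ has flat pieces. One also has to check the endpoint handling in the substitution $t=F(s)$, which is justified by continuity of the minimand in $t$.
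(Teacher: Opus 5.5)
Your proof is correct and follows the paper's argument essentially step for step. You use the same rotations via Lemma~\ref{lem:proj}(ii) for the pairs $(B,C)$, $(A,B)$, $(A,C)$, the same null-set replacement of $s$ by $F^{-1}(F(s))$, the substitution $t=F(s)$, $1-t=F(-s)$, and the limits $g(s)\to u,v$ as $s\to\mp\infty$ for the upper bound. The only differences are small: you handle the endpoints $t\in\{0,1\}$ by continuity of the minimand rather than by those limits, which is equivalent, and your phrase ``substitute $t=F(-s)$'' should read $t=F(s)$, as you in fact use one line later.
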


\begin{proof}
The pair $(B,C)=(X_0,-X_{\beta_1})$ gives $\Prob{B\le s,C\le y}=C_{\beta_1}^\rho(F(s),v)$: the
events $\{X_0\le s\}$ and $\{X_0\le F^{-1}(F(s))\}$ differ by a set of probability zero. The pair
$(A,B)=(X_{-\beta_2},X_0)\eqd(X_0,X_{\beta_2})$ (Lemma~\ref{lem:proj}(ii)) gives
$\Prob{A\le x,B\ge s}=\Prob{X_0\le x,\,-X_{\beta_2}\le-s}$, which equals
$C_{\beta_2}^\rho(u,F(-s))$, since $F(-s)=\Prob{-X_{\beta_2}\le -s}$ and $F$ is continuous. Finally,
$(A,C)=(X_{-\beta_2},-X_{\beta_1})\eqd(X_0,-X_{\beta_1+\beta_2})$, whence the last equality.

The image $F(\R)$ contains $(0,1)$, and for $t=F(s)$ we have $1-t=F(-s)$, so that the expression
under the minimum in~\eqref{eq:boxdot} equals $g(s)$. The values $t\in\{0,1\}$ give
$C_{\beta_2}^\rho(u,1)=u$ and $C_{\beta_1}^\rho(1,v)=v$; these are the limits of $g(s)$ as
$s\to\mp\infty$ (the event $\{B\le s\}$ decreases to the empty event, $\{B\ge s\}$ increases to
the sure event). Hence $\min_{t\in[0,1]}=\inf_{s\in\R}g$, and $\inf g\le\min(u,v)$.
\end{proof}

The lines $\{A=x\}$ and $\{C=y\}$ divide the plane into four angular sectors; we need the two
\emph{mixed} ones:
\[
  M_+:=\{A\le x,\ C>y\},\qquad M_-:=\{A>x,\ C\le y\}.
\]
The line $\{B=s\}$ cuts each of them in two:
\[
  R_1:=M_-\cap\{B\le s\},\quad R_4:=M_-\cap\{B>s\},\qquad
  R_2:=M_+\cap\{B<s\},\quad R_3:=M_+\cap\{B\ge s\}.
\]

\begin{lemma}[three identities]\label{lem:atoms}
For all $s$
\begin{equation}\label{eq:atomid}
\begin{aligned}
  g(s)-\Prob{A\le x,\ C\le y}&=\Prob{R_1}+\Prob{R_3},\\
  g(s)-u&=\Prob{R_1}-\Prob{R_2},\\
  g(s)-v&=\Prob{R_3}-\Prob{R_4}.
\end{aligned}
\end{equation}
\end{lemma}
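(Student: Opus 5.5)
The three identities are pure inclusion--exclusion on events determined by the single vector $G$, so the plan is to decompose each probability appearing in $g(s)$ and in $u,v$ along the partition of the plane cut out by the three lines. Since $\rho$ is absolutely continuous, each line $\{A=x\}$, $\{B=s\}$, $\{C=y\}$ has probability zero. Hence strict and non-strict inequalities may be interchanged freely, and I will do so without comment. In particular the boundary conventions in $R_1,\dots,R_4$ play no role.

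\emph{First identity.} Write $\Prob{B\le s,\,C\le y}=\Prob{A\le x,\,B\le s,\,C\le y}+\Prob{R_1}$, splitting $\{B\le s,C\le y\}$ according to $A\le x$ or $A>x$. The part with $A>x$ is exactly $M_-\cap\{B\le s\}=R_1$. Similarly, $\Prob{A\le x,\,B\ge s}=\Prob{A\le x,\,B\ge s,\,C\le y}+\Prob{R_3}$, splitting on $C$. The two three-fold events $\{A\le x,B\le s,C\le y\}$ and $\{A\le x,B\ge s,C\le y\}$ partition $\{A\le x,C\le y\}$ up to the null line $\{B=s\}$. Adding the two equalities therefore gives $g(s)=\Prob{A\le x,\,C\le y}+\Prob{R_1}+\Prob{R_3}$.

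\emph{Second and third identities.} These follow from the first by subtracting the marginal masses $u$ and $v$. We have $u=\Prob{A\le x}=\Prob{A\le x,\,C\le y}+\Prob{M_+}$, and $\Prob{M_+}=\Prob{R_2}+\Prob{R_3}$ up to the null line. Subtracting this from the first identity gives $g(s)-u=\Prob{R_1}-\Prob{R_2}$. Symmetrically, $v=\Prob{C\le y}=\Prob{A\le x,\,C\le y}+\Prob{M_-}$ with $\Prob{M_-}=\Prob{R_1}+\Prob{R_4}$, which yields $g(s)-v=\Prob{R_3}-\Prob{R_4}$. Here $u=\Prob{A\le x}$ and $v=\Prob{C\le y}$ hold because $A$ and $C$ have the continuous distribution function $F$ (Lemma~\ref{lem:proj}(i)), so $F(F^{-1}(u))=u$ for $u\in(0,1)$.

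There is no real obstacle here. The only point needing care is the bookkeeping of which sector each piece belongs to, together with the remark that all boundary lines are $\rho$-null. That remark rests on absolute continuity and makes the choice of strict versus non-strict inequalities in the definitions of $R_i$ immaterial. Note that the identities hold for every $s$ and every configuration of the lines; the geometry of the angles $\beta_1,\beta_2$ is not used at this stage.
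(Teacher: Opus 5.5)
Your proof is correct and is essentially the paper's argument. The first identity is obtained exactly as in the paper, by splitting each term of $g(s)$ on the missing coordinate. For the second and third identities you reorganize the same bookkeeping: you subtract $u=\Prob{A\le x,\,C\le y}+\Prob{R_2}+\Prob{R_3}$ (resp.\ $v=\Prob{A\le x,\,C\le y}+\Prob{R_1}+\Prob{R_4}$) from the first identity, whereas the paper decomposes $u$ and $v$ directly against $g(s)$; since $M_+=R_2\sqcup R_3$ and $M_-=R_1\sqcup R_4$ hold exactly, your ``up to the null line'' is not even needed there.
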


\begin{proof}
The event $\{B=s\}$ has probability zero. Split the terms of~\eqref{eq:g} according to the
missing coordinate: $\Prob{B\le s,C\le y}=\Prob{A\le x,B\le s,C\le y}+\Prob{R_1}$ and
$\Prob{A\le x,B\ge s}=\Prob{A\le x,B\ge s,C\le y}+\Prob{R_3}$; the sum of the first terms equals
$\Prob{A\le x,C\le y}$. Next,
\[
  u=\Prob{A\le x}=\Prob{A\le x,B<s,C\le y}+\Prob{R_2}+\Prob{A\le x,B\ge s},
\]
and subtracting from $g(s)$ gives the second identity; the third follows symmetrically from
$v=\Prob{B\le s,C\le y}+\Prob{A\le x,B>s,C\le y}+\Prob{R_4}$.
\end{proof}

\subsection{Interior and absorption regimes}\label{sec:geometry}

\begin{lemma}[sides of the mixed sectors]\label{lem:sides}
\begin{enumerate}
  \item[\rm(a)] If $\beta_1+\beta_2<\pi$, then there exists $s^\ast\in\R$ such that
    $M_-\subset\{B>s^\ast\}$ and $M_+\subset\{B<s^\ast\}$.
  \item[\rm(b)] If $\beta_1+\beta_2>\pi$, then there exists $s^\ast\in\R$ such that
    $M_+\subset\{B>s^\ast\}$ and $M_-\subset\{B<s^\ast\}$.
  \item[\rm(c)] If $\beta_1+\beta_2=\pi$, then $A=C$, and $M_+=\varnothing$ for $x\le y$,
    $M_-=\varnothing$ for $x>y$.
\end{enumerate}
\end{lemma}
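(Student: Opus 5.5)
The plan is pure plane geometry: each mixed sector is a translated convex cone, and I would compare the linear functional $B(g)=\langle g,u_0\rangle$ on that cone with its value at the apex. Write $a(g)=\langle g,u_{-\beta_2}\rangle$ and $c(g)=-\langle g,u_{\beta_1}\rangle$, so that $A=a(G)$, $B=\langle G,u_0\rangle$, $C=c(G)$. The boundary lines $\{a=x\}$ and $\{c=y\}$ have normals $u_{-\beta_2}$ and $u_{\beta_1+\pi}$. These are parallel iff $\beta_1+\beta_2\equiv0\pmod\pi$. Since $\beta_1+\beta_2\in(0,2\pi)$, this happens only when $\beta_1+\beta_2=\pi$.

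\emph{Case (c).} If $\beta_1+\beta_2=\pi$, then $u_{\beta_1+\pi}=u_{-\beta_2}$, hence $C=A$ identically. Then $M_+=\{y<A\le x\}$ is empty when $x\le y$, and $M_-=\{x<A\le y\}$ is empty when $x>y$.

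\emph{Setup for (a) and (b).} Otherwise the two lines meet at a single point $P$, and I would put $s^\ast:=\langle P,u_0\rangle$. Every point of $M_-$ has the form $P+d$ with $d\ne0$; note $d=0$ is excluded because $a(P)=x$, while $M_-$ requires $a>x$. Such $d$ lies in the closed cone
\[
K_-=\{d:\ \langle d,u_{-\beta_2}\rangle\ge0,\ \langle d,u_{\beta_1}\rangle\ge0\}.
\]
Similarly every point of $M_+$ is $P+d$ with $d\ne0$ in $K_+=-K_-$; here the apex is excluded because $M_+$ requires $c>y$. Hence it suffices to show that $\langle d,u_0\rangle$ has a constant strict sign on $K_-\setminus\{0\}$. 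A nonzero $d=r\,u_\theta$ lies in $K_-$ iff $\theta$ is within angular distance $\pi/2$ of both $-\beta_2$ and $\beta_1$.

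\emph{Case (a), $\beta_1+\beta_2<\pi$.} The two arcs overlap in $\theta\in[\beta_1-\pi/2,\ \pi/2-\beta_2]$. Because $\beta_1,\beta_2>0$, this arc lies strictly inside $(-\pi/2,\pi/2)$, so $\cos\theta>0$. Therefore $B>s^\ast$ on $M_-$, and $B<s^\ast$ on $M_+$.

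\emph{Case (b), $\beta_1+\beta_2>\pi$.} The short way between the centres $\beta_1$ and $2\pi-\beta_2$ passes through $\pi$ and has length $2\pi-\beta_1-\beta_2<\pi$. The overlap is therefore $\theta\in[3\pi/2-\beta_2,\ \beta_1+\pi/2]$. Because $\beta_1,\beta_2<\pi$, this lies strictly inside $(\pi/2,3\pi/2)$, so $\cos\theta<0$, and the inclusions swap.

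\emph{Main obstacle.} The only delicate step is the angular bookkeeping in (b). One must identify the correct overlap of the two arcs modulo $2\pi$; the other possible overlap, near $0$, is empty precisely because $\beta_1+\beta_2>\pi$. One must also use the strict inequalities $0<\beta_m<\pi$, so that the overlap avoids the endpoints $\pm\pi/2$ where $\cos\theta=0$. That strictness is exactly what makes the inclusions hold with strict inequalities in $B$.
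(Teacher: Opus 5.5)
Your proof is correct. It shares the paper's skeleton: the same treatment of case (c), and the same $s^\ast$, namely the value of $B$ at the intersection point of the lines $\{A=x\}$ and $\{C=y\}$. Where you differ is in how you establish the signs in (a) and (b); your argument is the dual of the paper's.

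The paper writes $u_0=\lambda u_{-\beta_2}+\mu u_{\beta_1}$ with $\lambda=\sin\beta_1/\sin(\beta_1+\beta_2)$ and $\mu=\sin\beta_2/\sin(\beta_1+\beta_2)$. This gives the identity $B-s^\ast=\lambda(A-x)-\mu(C-y)$. Since $\lambda$ and $\mu$ share the sign of $\sin(\beta_1+\beta_2)$, the sign of $B-s^\ast$ on $M_\pm$ follows directly from the strict and non-strict inequalities that define the sectors.

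You instead translate each sector to the cone $K_-$ (or $-K_-$) and compute its angular extent. The two arguments are equivalent by cone duality: $\langle d,u_0\rangle>0$ on $K_-\setminus\{0\}$ exactly when $u_0$ is a strictly positive combination of $u_{-\beta_2}$ and $u_{\beta_1}$.

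The paper's route handles (a) and (b) uniformly through the sign of a single denominator. It also avoids the modulo-$2\pi$ arc bookkeeping that you identified as the main obstacle. Your route needs no coefficient computation and makes the wedge geometry explicit, at the cost of that case analysis.

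Your overlap computations are right. That each overlap is a single arc is automatic: two closed semicircles whose centres are at angular distance in $(0,\pi)$ meet in exactly one arc. So in (a) there is no second piece to rule out, and in (b) your remark about the empty overlap near $0$ is the same fact.
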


\begin{proof}
(c) For $\beta_1+\beta_2=\pi$ we have $u_{\beta_1+\pi}=u_{2\pi-\beta_2}=u_{-\beta_2}$, that is,
$C=A$; then $M_+=\{y<A\le x\}$ and $M_-=\{x<A\le y\}$.

(a), (b) For $\beta_1+\beta_2\ne\pi$ the vectors $u_{-\beta_2}$ and $u_{\beta_1}$ form a basis,
and a direct check of coordinates gives
\[
  u_0=\lambda\,u_{-\beta_2}+\mu\,u_{\beta_1},\qquad
  \lambda=\frac{\sin\beta_1}{\sin(\beta_1+\beta_2)},\quad
  \mu=\frac{\sin\beta_2}{\sin(\beta_1+\beta_2)} .
\]
Since $C=-\langle G,u_{\beta_1}\rangle$, this yields the identity $B=\lambda A-\mu C$. Put
$s^\ast:=\lambda x-\mu y$; then
\[
  B-s^\ast=\lambda\,(A-x)-\mu\,(C-y).
\]
On $M_+$ the bracket $A-x$ is nonpositive and $C-y$ is positive, so $B-s^\ast$ has the sign
opposite to that of $\lambda,\mu$; on $M_-$ it has the same sign. It remains to note that
$\lambda,\mu>0$ for $\beta_1+\beta_2<\pi$ and $\lambda,\mu<0$ for $\beta_1+\beta_2>\pi$
(the numerators are positive for $\beta_1,\beta_2\in(0,\pi)$). Geometrically, $s^\ast$ is the
value of $B$ at the intersection point $\mathbf p$ of the lines $\{A=x\}$ and $\{C=y\}$
(Fig.~\ref{fig:wedges}).
\end{proof}

\begin{figure}[ht]\centering
\begin{tikzpicture}[scale=1.05,>=Latex]
  \begin{scope}
    \clip (-2.1,-2.1) rectangle (2.1,2.1);
    \fill[blue!12] (0,0) -- (140:5) -- (210:5) -- cycle;
    \fill[red!12]  (0,0) -- (-40:5) -- (30:5) -- cycle;
    \draw[thick] (30:3) -- (210:3);
    \draw[thick] (140:3) -- (-40:3);
    \draw[dashed] (0,-2.1) -- (0,2.1);
  \end{scope}
  \node[right] at (2.1,1.21) {\small $\{A=x\}$};
  \node[right] at (2.1,-1.76) {\small $\{C=y\}$};
  \node[above] at (0,2.1) {\small $\{B=s^\ast\}$};
  \node[blue!70!black] at (175:1.3) {\small $M_+$};
  \node[red!70!black] at (-5:1.3) {\small $M_-$};
  \draw[->,blue!60] (0,0) -- (-60:0.8) node[below,xshift=3pt]{\tiny $u_{-\beta_2}$};
  \draw[->,blue!60] (0,0) -- (230:0.8) node[below,xshift=-3pt]{\tiny $u_{\beta_1+\pi}$};
  \draw[->,blue!60] (0,0) -- (0:0.8) node[above]{\tiny $u_0$};
  \fill (0,0) circle (1.2pt);
  \node at (0,-2.5) {\small (a) $\beta_1+\beta_2<\pi$: $R_1=R_3=\varnothing$ for $s=s^\ast$};
\end{tikzpicture}
\hspace{3mm}
\begin{tikzpicture}[scale=1.05,>=Latex]
  \begin{scope}
    \clip (-2.1,-2.1) rectangle (2.1,2.1);
    \fill[blue!12] (0,0) -- (-10:5) -- (30:5) -- cycle;
    \fill[red!12]  (0,0) -- (170:5) -- (210:5) -- cycle;
    \draw[thick] (-10:3) -- (170:3);
    \draw[thick] (30:3) -- (210:3);
    \draw[dashed] (0,-2.1) -- (0,2.1);
  \end{scope}
  \node[right] at (2.1,-0.37) {\small $\{A=x\}$};
  \node[right] at (2.1,1.21) {\small $\{C=y\}$};
  \node[above] at (0,2.1) {\small $\{B=s^\ast\}$};
  \node[blue!70!black] at (10:1.5) {\small $M_+$};
  \node[red!70!black] at (190:1.5) {\small $M_-$};
  \draw[->,blue!60] (0,0) -- (-100:0.8) node[below left,inner sep=1pt]{\tiny $u_{-\beta_2}$};
  \draw[->,blue!60] (0,0) -- (300:0.8) node[right]{\tiny $u_{\beta_1+\pi}$};
  \draw[->,blue!60] (0,0) -- (0:0.8) node[above]{\tiny $u_0$};
  \fill (0,0) circle (1.2pt);
  \node at (0,-2.5) {\small (b) $\beta_1+\beta_2>\pi$: $M_\pm$ have swapped sides};
\end{tikzpicture}
\caption{The mixed sectors $M_+=\{A\le x,C>y\}$ and $M_-=\{A>x,C\le y\}$ and the vertical line
$\{B=s^\ast\}$ through the point $\mathbf p$. In the interior regime (a) both ``extra'' pieces
$R_1=M_-\cap\{B\le s^\ast\}$ and $R_3=M_+\cap\{B\ge s^\ast\}$ are empty. In the absorption
regime (b), for every $s$ one of the sectors lies entirely on the ``right'' side:
$R_2=\varnothing$ for $s\le s^\ast$, $R_4=\varnothing$ for $s\ge s^\ast$.}
\label{fig:wedges}
\end{figure}

\begin{proof}[Proof of Theorem~\ref{thm:main}]
The cases $\beta_m\in\{0,\pi\}$ were settled right after the statement; let
$\beta_1,\beta_2\in(0,\pi)$.

\emph{Interior regime} $\beta_1+\beta_2<\pi$. Take $s=s^\ast$. By Lemma~\ref{lem:sides}(a),
$R_1\subset M_-\cap\{B\le s^\ast\}=\varnothing$ and $R_3\subset M_+\cap\{B\ge s^\ast\}=\varnothing$,
so that by the first identity in~\eqref{eq:atomid}
$g(s^\ast)=\Prob{A\le x,C\le y}=C_{\beta_1+\beta_2}^\rho(u,v)$
(Lemma~\ref{lem:reduce}). The same identity gives $g(s)\ge\Prob{A\le x,C\le y}$ for all $s$,
hence $\inf g=g(s^\ast)$, and by Lemma~\ref{lem:reduce}
$(C_{\beta_1}^\rho\odot C_{\beta_2}^\rho)(u,v)=C_{\beta_1+\beta_2}^\rho(u,v)$.

\emph{Absorption regime} $\beta_1+\beta_2>\pi$. For $s\le s^\ast$, by Lemma~\ref{lem:sides}(b),
$R_2\subset M_+\cap\{B<s\}\subset\{B>s^\ast\}\cap\{B<s\}=\varnothing$, and the second
identity in~\eqref{eq:atomid} gives $g(s)\ge u$. For $s\ge s^\ast$, similarly, $R_4=\varnothing$
and $g(s)\ge v$. Thus $g\ge\min(u,v)$ everywhere, while $\inf g\le\min(u,v)$ by
Lemma~\ref{lem:reduce}; hence
$(C_{\beta_1}^\rho\odot C_{\beta_2}^\rho)(u,v)=\min(u,v)=M(u,v)=C_\pi^\rho(u,v)$.

\emph{Boundary case} $\beta_1+\beta_2=\pi$. By Lemma~\ref{lem:sides}(c), for $x\le y$ (that is,
$u\le v$) $M_+$ is empty, whence $R_2=\varnothing$ and $g\ge u=\min(u,v)$ for all $s$; for $x>y$
$M_-$ is empty, $R_4=\varnothing$ and $g\ge v=\min(u,v)$. As above, $\inf g=\min(u,v)=M(u,v)$.
\end{proof}

\begin{remark}[what was used]\label{rem:hyp}
Of the absolute continuity of $\rho$, the proof used only the continuity of $F$
(needed so that the event $\{B=s\}$ has probability zero); for a rotation-invariant
measure this is equivalent to $\rho(\{0\})=0$, since $X_0=R\cos\Theta$ with $\Theta$ uniform, and
$F$ can have an atom only if $R=0$. The coincidence of the lines in case (c) requires
no measure-theoretic considerations: one of the sectors is simply empty. Absolute continuity is
essentially needed only in the rigidity theorem (\S\ref{sec:rigidity}).
\end{remark}

\subsection{Injectivity, roots, inversion number}\label{sec:cors}

By Theorem~\ref{thm:main} and Lemma~\ref{lem:perm}, the family $C^\rho$ is a geodesic
(Definition~\ref{def:geo}). We show that the map $\beta\mapsto C^\rho_\beta$ is injective,
describe the roots, and give the parameter $\beta/\pi$ a probabilistic meaning.

\begin{lemma}[value at the square's center; injectivity]\label{lem:center}
$C_\beta^\rho(\tfrac12,\tfrac12)=\beta/2\pi$. In particular, $\beta\mapsto C_\beta^\rho$ is injective.
\end{lemma}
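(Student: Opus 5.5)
By Lemma~\ref{lem:proj}(i), $F^{-1}(\tfrac12)=0$. Substituting into Definition~\ref{def:proj} therefore gives
\[
C_\beta^\rho(\tfrac12,\tfrac12)=\Prob{X_0\le0,\ -X_\beta\le0}=\Prob{\langle G,u_0\rangle\le0,\ \langle G,u_\beta\rangle\ge0}.
\]
The plan is to compute this probability in polar coordinates. Write $G=R(\cos\Theta,\sin\Theta)$ with $R>0$ almost surely. Both conditions are invariant under positive scaling, so they depend only on $\Theta$, which is uniform on $[0,2\pi)$ by Lemma~\ref{lem:proj}(iii). The events on the boundary rays $\{\cos\Theta=0\}$ and $\{\cos(\Theta-\beta)=0\}$ have probability zero, so strict and non-strict inequalities may be used interchangeably.

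The event is the intersection of two arcs on the circle. The condition $\cos\Theta\le0$ gives the closed half-circle $[\pi/2,3\pi/2]$. The condition $\cos(\Theta-\beta)\ge0$ gives the half-circle $[\beta-\pi/2,\beta+\pi/2]$. For $\beta\in[0,\pi]$ these two half-circles overlap exactly in $[\pi/2,\beta+\pi/2]$, an arc of length $\beta$. The lower endpoint of the second half-circle, $\beta-\pi/2$, lies in $[-\pi/2,\pi/2]$, so it never re-enters $[\pi/2,3\pi/2]$ modulo $2\pi$. Dividing the arc length by $2\pi$ gives $\beta/2\pi$.

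As a consistency check, the endpoint cases agree with Lemma~\ref{lem:perm}: $W(\tfrac12,\tfrac12)=0$ and $M(\tfrac12,\tfrac12)=\tfrac12$. Injectivity of $\beta\mapsto C_\beta^\rho$ is then immediate, because the evaluation $C\mapsto C(\tfrac12,\tfrac12)$ is already injective on the family.

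The only delicate step is the arc bookkeeping modulo $2\pi$, that is, confirming that the two half-circles overlap in a single arc of length exactly $\beta$ for all $\beta\in[0,\pi]$.
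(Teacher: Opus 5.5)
Your proposal is correct and follows the paper's proof essentially line for line. You use $F^{-1}(\tfrac12)=0$ to reduce to $\Prob{\cos\Theta\le0,\ \cos(\Theta-\beta)\ge0}$ with $\Theta$ uniform, identify the arc $[\tfrac\pi2,\tfrac\pi2+\beta]$ of length $\beta$, and read off injectivity from $\beta\mapsto\beta/2\pi$; your extra care with the arc bookkeeping modulo $2\pi$ and with the null boundary events is fine but not essential.
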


\begin{proof}
$F^{-1}(\tfrac12)=0$, hence
\[
  C_\beta^\rho(\tfrac12,\tfrac12)=\Prob{X_0\le0,\ X_\beta\ge0}
  =\Prob{\cos\Theta\le0,\ \cos(\Theta-\beta)\ge0}.
\]
This is the arc $\Theta\in[\tfrac\pi2,\tfrac\pi2+\beta]$ of opening $\beta$, whose probability is
$\beta/2\pi$ (Lemma~\ref{lem:proj}(iii)).
\end{proof}

\begin{corollary}[roots within the family]\label{cor:roots}
For $\beta\in(0,\pi)$ and $n\ge1$, the copula $C_\beta^\rho$ has a unique $n$-th $\odot$-root
within the family $C^\rho$, namely $C_{\beta/n}^\rho$.
\end{corollary}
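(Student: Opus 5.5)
Existence comes from Theorem~\ref{thm:main}; uniqueness comes from Lemma~\ref{lem:center}. The $n$-fold product is meant as the iterated product $C\odot(C\odot(\cdots\odot C))$, with any fixed bracketing; within the family every bracketing gives the same result, since there the product reduces to truncated addition of parameters.

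\emph{Existence.} I prove by induction on $k=1,\dots,n$ that the $k$-fold $\odot$-power of $C^\rho_{\gamma}$ equals $C^\rho_{\min(k\gamma,\pi)}$. The inductive step is Theorem~\ref{thm:main} applied with $\beta_1=\gamma$ and $\beta_2=\min((k-1)\gamma,\pi)$, together with the identity
\[
\min\bigl(\gamma+\min((k-1)\gamma,\pi),\pi\bigr)=\min(k\gamma,\pi).
\]
Taking $\gamma=\beta/n$, we have $k\gamma\le\beta<\pi$ for every $k\le n$, so the $n$-th power of $C^\rho_{\beta/n}$ is $C^\rho_\beta$.

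\emph{Uniqueness.} Let $C^\rho_\gamma$ with $\gamma\in[0,\pi]$ satisfy $(C^\rho_\gamma)^{\odot n}=C^\rho_\beta$. By the same induction, $(C^\rho_\gamma)^{\odot n}=C^\rho_{\min(n\gamma,\pi)}$. This holds also for $\gamma\in\{0,\pi\}$, where it follows from Lemma~\ref{lem:basic}(i),(ii), which the theorem already covers. Hence $C^\rho_{\min(n\gamma,\pi)}=C^\rho_\beta$. Injectivity of $\beta\mapsto C^\rho_\beta$ (Lemma~\ref{lem:center}) then gives $\min(n\gamma,\pi)=\beta$. Since $\beta<\pi$, the minimum is not $\pi$, so $n\gamma=\beta$ and $\gamma=\beta/n$.

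The only point needing care is the truncation in the uniqueness step: a root with $n\gamma\ge\pi$ would produce $M$. This is excluded because $\beta<\pi$, so $C^\rho_\beta\ne M$ by injectivity. The hypothesis $\beta>0$ excludes $\beta=0$, where $C^\rho_0=W$ and the unique root is trivially $W$. Beyond this, I expect no obstacle; the statement is a direct consequence of the angle-addition law and the value at the centre of the square.
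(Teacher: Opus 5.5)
Your proof is correct and follows the paper's own argument: existence comes from Theorem~\ref{thm:main}, since the partial sums $k\beta/n$ stay below $\pi$, and uniqueness comes from reducing $(C^\rho_\gamma)^{\odot n}=C^\rho_\beta$ to $C^\rho_{\min(n\gamma,\pi)}=C^\rho_\beta$ and applying the injectivity of Lemma~\ref{lem:center}. Your explicit induction, the truncation identity, and the bracketing remark simply spell out steps the paper leaves implicit.
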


\begin{proof}
$(C_{\beta/n}^\rho)^{\odot n}=C_\beta^\rho$ by Theorem~\ref{thm:main}, since all partial sums
$k\beta/n<\pi$. If $(C_{\beta'}^\rho)^{\odot n}=C_\beta^\rho$, then
$C_{\min(n\beta',\pi)}^\rho=C_\beta^\rho$, and by injectivity $n\beta'=\beta$. (For $\beta=\pi$
uniqueness is lost: $M=(C_{\beta'}^\rho)^{\odot n}$ for all $\beta'\ge\pi/n$.)
\end{proof}

\begin{lemma}[direct sums]\label{lem:multi}
Let $0=a_0<a_1<\dots<a_m=1$, $w_k:=a_k-a_{k-1}$, $I_k:=[a_{k-1},a_k]$, and
$J_k:=[1-a_k,\,1-a_{k-1}]$, $k=1,\dots,m$; the blocks $I_k\times J_k$ are arranged along the
antidiagonal (Figure~\ref{fig:blocks}). For copulas $A_1,\dots,A_m$ denote by $\bigoplus_kA_k$ the copula of the permuton
that carries, in the block $I_k\times J_k$, mass $w_k$ distributed as a copy of the permuton
of $A_k$ scaled down by the factor $w_k$. Then for any copulas $A_k,B_k$
\[
  \Big(\bigoplus_kA_k\Big)\odot\Big(\bigoplus_kB_k\Big)=\bigoplus_k\,(A_k\odot B_k).
\]
\end{lemma}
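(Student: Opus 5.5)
The plan is to compute both sides pointwise from the tropical formula~\eqref{eq:boxdot}, using an explicit description of the copula of a direct sum. First I write $D=\bigoplus_kA_k$ explicitly. A point $(u,v)$ with $u\in I_k$ and $v\in J_l$ is treated as follows. The block $I_j\times J_j$ lies to the left of $u$ exactly for $j<k$. It lies below $v$ exactly for $j>l$, because the blocks descend along the antidiagonal. Hence
\[
  D(u,v)=\sum_{j:\,j<k,\ j>l} w_j\;+\;[k=l]\,w_k\,A_k\!\Big(\tfrac{u-a_{k-1}}{w_k},\tfrac{v-(1-a_k)}{w_k}\Big).
\]
For $k\ne l$ the last term is replaced by the appropriate boundary values: $w_k\cdot 1$ or $0$ at the edges, which reduce to the marginal values $A_k(\cdot,1)$, $A_k(1,\cdot)$. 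Equivalently, $D(u,v)=\sum_j w_j A_j(\bar u_j,\bar v_j)$, where $\bar u_j\in[0,1]$ is the clamped relative position of $u$ in $I_j$ and $\bar v_j$ is that of $v$ in $J_j$. I will use this clamped-sum form, since it makes every block contribute uniformly.

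Next I substitute into $(D\odot D')(u,v)=\min_t[D(t,v)+D'(u,1-t)]$. The key observation is that $1-t\in J_j$ if and only if $t\in I_j$, with relative position $\overline{(1-t)}_j=1-\bar t_j$ computed in $J_j$. This gives
\[
  D(t,v)+D'(u,1-t)=\sum_j w_j\big[A_j(\bar t_j,\bar v_j)+B_j(\bar u_j,1-\bar t_j)\big].
\]
The variable $t$ is a single parameter, and the numbers $\bar t_j$ are all determined by it: they are $1$ for blocks entirely below $t$ and $0$ for blocks entirely above it. So the minimum over $t$ is not simply the sum of the blockwise minima. This is the main obstacle, and I handle it as follows. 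For a block $j$ not containing the relevant point, I show that the bracket is minimized at the boundary value forced by $t$, or at least that the forced value is harmless. When $\bar v_j\in\{0,1\}$ or $\bar u_j\in\{0,1\}$, the bracket reduces via $A(t,0)=0$, $A(t,1)=t$, $B(0,\cdot)=0$, $B(1,s)=s$ to one of the functions $0$, $\bar t$, $1-\bar t$, or $1$.

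I then split into cases according to $u\in I_k$ and $v\in J_l$. In every block $j\ne k,l$, both $\bar u_j$ and $\bar v_j$ are extreme. The bracket is then either constant, or equal to $\bar t_j$ with $\bar v_j=1$ and $\bar u_j=0$, or to $1-\bar t_j$, or to $1$. Monotonicity in $\bar t_j$ tells me where each such block wants $t$ to be. The sign pattern coming from the antidiagonal arrangement makes these preferences consistent. If $k=l$, blocks with $j<k$ have $\bar u_j=1$ and $\bar v_j=0$, so the bracket is $1-\bar t_j$ and wants $\bar t_j=1$, that is, $t$ to the right. Blocks with $j>k$ have $\bar u_j=0$ and $\bar v_j=1$, so the bracket is $\bar t_j$ and wants $t$ to the left. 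Both preferences are achieved by $t\in I_k$. Restricting $t$ to $I_k$ therefore loses nothing, and the minimum becomes $\sum_{j<k}w_j\cdot0+\dots+w_k(A_k\odot B_k)(\bar u_k,\bar v_k)$. This matches $\bigoplus_j(A_j\odot B_j)$ evaluated by the clamped-sum formula, using Lemma~\ref{lem:basic}-type boundary values of $A_j\odot B_j$, namely $(A\odot B)(\bar u,1)=\bar u$ and similar.

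If $k\ne l$, the only nonconstant brackets are in blocks $k$ and $l$, together with the blocks lying between them. The blocks between them contribute a piecewise linear term. I check that the total as a function of $t$ is minimized with $t$ at a block boundary or inside $I_k$ or $I_l$. At such a $t$, the value reduces to the marginal values $(A\odot B)(\bar u,1)=\bar u$ and $(A\odot B)(1,\bar v)=\bar v$, which hold because the product is a copula by Corollary~\ref{cor:limit}, or directly by Lemma~\ref{lem:reduce}-style endpoint evaluation. Each case is a short monotonicity check, and I expect the bookkeeping in the $k\ne l$ case to be the only delicate part.
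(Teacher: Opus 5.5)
Your argument is correct, but it takes a genuinely different route from the paper's. The paper never works with the $(\min,+)$ formula here. It notes that the Demazure product of block permutations $\sigma_1\oplus\dots\oplus\sigma_m$ is computed blockwise: the reduced words contain no generators joining different blocks, and generators of different blocks commute. It then realizes $A_k,B_k$ as limits of $\mu$-random permutations (Theorem~\ref{thm:sampling}) and passes to the limit on both sides via Corollary~\ref{cor:limit}, hence via Theorem~\ref{thm:tiskin}, using continuity of $\oplus$ in the blocks and in the partition.

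You work pointwise instead. Put $\phi_j(\tau)=A_j(\tau,\bar v_j)+B_j(\bar u_j,1-\tau)$. Your objective is $\sum_j w_j\phi_j(\bar t_j)$, which trivially dominates $\sum_j w_j\min_\tau\phi_j=\sum_j w_j(A_j\odot B_j)(\bar u_j,\bar v_j)$. The whole content is then that a single $t$ minimizes every bracket at once. Phrased this way, you do not need the marginal values of $A_j\odot B_j$ at all. Corollary~\ref{cor:limit} enters only to guarantee that $A_k\odot B_k$ is a copula, so that $\bigoplus_k(A_k\odot B_k)$ is defined as in the statement.

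What each route buys: yours is elementary and self-contained (only \eqref{eq:boxdot} and \eqref{eq:incr}), it exhibits the minimizing $t$, and it fits the paper's stated preference for computing products explicitly. The paper's proof is shorter and more conceptual, but it rests on Tiskin's theorem and the sampling theorem.

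There is one slip in your $k\ne l$ sketch. The nonconstant brackets belong to the outer blocks: $1-\bar t_j$ for $j<\min(k,l)$ and $\bar t_j$ for $j>\max(k,l)$. The blocks strictly between $k$ and $l$ have constant brackets, $0$ if $k<j<l$ and $1$ if $l<j<k$. The mixed blocks $k,l$ have brackets that are monotone by \eqref{eq:incr}:
\begin{itemize}
\item for $k>l$, $A_l(\tau,\bar v_l)+1-\tau$ is nonincreasing and $\tau+B_k(\bar u_k,1-\tau)$ is nondecreasing;
\item for $k<l$, $B_k(\bar u_k,1-\tau)$ is nonincreasing and $A_l(\tau,\bar v_l)$ is nondecreasing.
\end{itemize}
Hence $t=a_{\min(k,l)}$ minimizes all brackets simultaneously, and the check is indeed routine.
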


\begin{proof}
For $\sigma_k\in S_{n_k}$ denote by $\sigma_1\oplus\dots\oplus\sigma_m\in S_n$,
$n=n_1+\dots+n_m$, the permutation acting as $\sigma_k$ on the $k$-th block of positions; the
blocks of its matrix lie along the antidiagonal, so the copula of
$P_{\sigma_1\oplus\dots\oplus\sigma_m}$ is $\bigoplus_kC_{P_{\sigma_k}}$ with the partition
$a_k=(n_1+\dots+n_k)/n$. The Demazure product of block permutations is computed blockwise,
\[
  (\sigma_1\oplus\dots\oplus\sigma_m)\star(\tau_1\oplus\dots\oplus\tau_m)
  =(\sigma_1\star\tau_1)\oplus\dots\oplus(\sigma_m\star\tau_m),
\]
since the reduced words of the factors contain no generators connecting different blocks, and
generators of different blocks commute.

Let $n\to\infty$, $n_k/n\to w_k$, and let $\sigma_k^{(n)},\tau_k^{(n)}\in S_{n_k}$ be
$\mu$-random permutations for the permutons with copulas $A_k$ and $B_k$; by
Theorem~\ref{thm:sampling}, almost surely $P_{\sigma_k^{(n)}}\to A_k$ and
$P_{\tau_k^{(n)}}\to B_k$. Put $u_n=\bigoplus_k\sigma_k^{(n)}$, $v_n=\bigoplus_k\tau_k^{(n)}$.
The operation $\oplus$ is continuous in the blocks and in the partition, so the copulas of
$P_{u_n}$ and $P_{v_n}$ converge to $\bigoplus_kA_k$ and $\bigoplus_kB_k$, and by
Corollary~\ref{cor:limit} the copulas of $P_{u_n\star v_n}$ converge to
$(\bigoplus_kA_k)\odot(\bigoplus_kB_k)$. On the other hand,
$u_n\star v_n=\bigoplus_k(\sigma_k^{(n)}\star\tau_k^{(n)})$, and by the same corollary applied in
each block, rescaled to the unit square, the copulas of $P_{u_n\star v_n}$ converge to
$\bigoplus_k(A_k\odot B_k)$. The limit is unique.
\end{proof}

\begin{remark}[multi-parameter submonoids]\label{rem:multi}
For a fixed partition $\{a_k\}$ and profiles $\rho_1,\dots,\rho_m$, the copulas
$\bigoplus_kC^{\rho_k}_{\beta_k}$ form, by Lemma~\ref{lem:multi}, an $m$-parameter submonoid in
which angles add coordinatewise and roots are unique whenever all $\beta_k<\pi$ (Figure~\ref{fig:blocks}); this is a
continuous analogue of the parabolic submonoids
$\mathcal H_{n_1}\times\dots\times\mathcal H_{n_m}$.
\end{remark}

\begin{figure}[ht]\centering
\begin{tikzpicture}[scale=0.62]

  \begin{scope}[xshift=0cm]
    \fill[blue!8] (0,1.6) rectangle (2.4,4);
    \fill[blue!8] (2.4,0) rectangle (4,1.6);
    \draw (0,0) rectangle (4,4);
    \draw[gray!60] (0,1.6) rectangle (2.4,4);
    \draw[gray!60] (2.4,0) rectangle (4,1.6);
    \draw[very thick,blue!70!black,rotate around={-45:(1.200,2.800)}] (1.200,2.800) ellipse (1.568 and 0.649);
    \node at (3.200,2.800) {\tiny $\beta_1{=}\pi/4$};
    \draw[very thick,blue!70!black,rotate around={-45:(3.200,0.800)}] (3.200,0.800) ellipse (0.800 and 0.800);
    \node at (1.200,0.800) {\tiny $\beta_2{=}\pi/2$};
    \node at (2,-0.7) {\small $(\pi/4,\pi/2)$};
  \end{scope}

  \begin{scope}[xshift=6cm]
    \fill[blue!8] (0,1.6) rectangle (2.4,4);
    \fill[blue!8] (2.4,0) rectangle (4,1.6);
    \draw (0,0) rectangle (4,4);
    \draw[gray!60] (0,1.6) rectangle (2.4,4);
    \draw[gray!60] (2.4,0) rectangle (4,1.6);
    \draw[very thick,blue!70!black,rotate around={-45:(1.200,2.800)}] (1.200,2.800) ellipse (1.200 and 1.200);
    \node at (3.200,2.800) {\tiny $\beta_1{=}\pi/2$};
    \draw[very thick,blue!70!black,rotate around={-45:(3.200,0.800)}] (3.200,0.800) ellipse (1.045 and 0.433);
    \node at (1.200,0.800) {\tiny $\beta_2{=}\pi/4$};
    \node at (2,-0.7) {\small $(\pi/2,\pi/4)$};
  \end{scope}

  \begin{scope}[xshift=12cm]
    \fill[blue!8] (0,1.6) rectangle (2.4,4);
    \fill[blue!8] (2.4,0) rectangle (4,1.6);
    \draw (0,0) rectangle (4,4);
    \draw[gray!60] (0,1.6) rectangle (2.4,4);
    \draw[gray!60] (2.4,0) rectangle (4,1.6);
    \draw[very thick,blue!70!black,rotate around={-45:(1.200,2.800)}] (1.200,2.800) ellipse (0.649 and 1.568);
    \node at (3.200,2.800) {\tiny $3\pi/4$};
    \draw[very thick,blue!70!black,rotate around={-45:(3.200,0.800)}] (3.200,0.800) ellipse (0.433 and 1.045);
    \node at (1.200,0.800) {\tiny $3\pi/4$};
    \node at (2,-0.7) {\small $(3\pi/4,3\pi/4)$};
  \end{scope}
  \node at (5,2) {$\odot$};
  \node at (11,2) {$=$};
\end{tikzpicture}
\caption{Two elements of the two-parameter submonoid of Remark~\ref{rem:multi} with $m=2$,
$a_1=0.6$, $\rho_1=\rho_2=\Arch$, and their product. The blocks are $I_1\times J_1$ (top left) and
$I_2\times J_2$ (bottom right), with no mass outside the blocks; inside the blocks are Archimedean
copulas $C^{\Arch}_{\beta_k}$ (supported on ellipses). Below the squares are the parameters
$(\beta_1,\beta_2)$: the product is computed blockwise, and the angles add coordinatewise,
$(\pi/4,\pi/2)\odot(\pi/2,\pi/4)=(3\pi/4,3\pi/4)$.}
\label{fig:blocks}
\end{figure}

\begin{definition}[normalized inversion number]\label{def:inv}
Let $(X,Y),(X',Y')$ be independent points with distribution $P$. The pair forms an
\emph{inversion} if it is ordered the same way in both coordinates, and the \emph{normalized
inversion number} of the permuton $P$ is the probability of this event:
\[
  \inv(P):=\Prob{(X-X')(Y-Y')>0}.
\]
From now on we drop the word ``normalized''.
\end{definition}

The definition extends the discrete one. For $\sigma\in S_n$ the inversion number $\inv(\sigma)$
is the number of pairs $i<j$ with $\sigma(i)>\sigma(j)$; in the context of Coxeter groups, this
is the same as the permutation's length. Associate with $\sigma$ the permuton
$P_\sigma$ of \S\ref{sec:defs}: squares of side $1/n$ in place of the unit entries of the
permutation matrix, row $i$ counted from the top. Two points of $P_\sigma$ from different
squares are ordered the same way in both coordinates exactly when the corresponding pair of
indices is an inversion of $\sigma$, hence $\inv(P_\sigma)=2\inv(\sigma)/n^2+O(1/n)$: this is
the inversion number normalized by $\binom n2$, up to $O(1/n)$. The functional $\inv$ is
continuous in the weak topology on permutons (the boundary of the event $\{(X-X')(Y-Y')>0\}$
lies on the lines $X=X'$, $Y=Y'$, which have $P\otimes P$-measure zero), so $\inv$ of a limit
shape is the limit of the normalized inversion numbers. The limit shapes of the identity and
the reverse identity permutations are the permutons $W$ and $M$: $\inv(W)=0$, $\inv(M)=1$, as it should be;
$\inv(\Unif)=\tfrac12$. Under the identification of $\sigma$ with its graph
$\{(i/n,\sigma(i)/n)\}$, adopted in~\cite{AHRV2007} and in \S\ref{sec:rsn}, the sign is
reversed and $\inv$ counts the non-inverted pairs.

\begin{lemma}[geodesic parameter $=$ inversion number]\label{lem:kendall}
$\inv(C_\beta^\rho)=\beta/\pi$.
\end{lemma}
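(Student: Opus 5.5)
We need $\inv(C_\beta^\rho)=\Prob{(X-X')(Y-Y')>0}$, where $(X,Y)$ and $(X',Y')$ are independent copies of $(F(X_0),F(-X_\beta))$. Since $F$ is nondecreasing and continuous, and ties have probability zero, the signs of $X-X'$ and $Y-Y'$ coincide almost surely with the signs of $X_0-X_0'$ and $-(X_\beta-X_\beta')$. So the plan is to realize both points through two independent vectors $G,G'\sim\rho$ and pass to the difference $D:=G-G'$. Then
\[
  \inv(C^\rho_\beta)=\Prob{\langle D,u_0\rangle\,\langle D,u_\beta\rangle<0}.
\]

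The key observation is that $D$ is again rotation-invariant and has no atom at $0$. Rotation invariance holds because $(G,G')\mapsto(R_\omega G,R_\omega G')$ preserves the law of the pair. The absence of an atom holds because $\rho$ is absolutely continuous, so $\Prob{G=G'}=0$. Writing $D=|D|(\cos\Psi,\sin\Psi)$, the angle $\Psi$ is therefore uniform on $[0,2\pi)$, exactly as in Lemma~\ref{lem:proj}(iii). The same reasoning shows that the events $\langle D,u_0\rangle=0$ and $\langle D,u_\beta\rangle=0$ are null, which justifies ignoring ties.

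It remains to compute $\Prob{\cos\Psi\cos(\Psi-\beta)<0}$ for uniform $\Psi$, in the same spirit as Lemma~\ref{lem:center}. The product is negative exactly when the line $\{\langle\cdot,u_0\rangle=0\}$ and the line $\{\langle\cdot,u_\beta\rangle=0\}$ separate the directions. The first line is at angle $\pi/2$ and the second at angle $\beta+\pi/2$, so the product is negative precisely when $\Psi$ lies in one of the two opposite arcs $(\pi/2,\pi/2+\beta)$ and $(3\pi/2,3\pi/2+\beta)$. Their total length is $2\beta$, which gives probability $2\beta/2\pi=\beta/\pi$.

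The only delicate step is the sign and tie bookkeeping, namely that the quantile transform preserves the strict orders almost surely. This follows because $F$ is strictly increasing on the support (Lemma~\ref{lem:proj}) and the relevant lines carry zero mass under the law of $D$. The endpoints $\beta=0$ and $\beta=\pi$ are consistent with $\inv(W)=0$ and $\inv(M)=1$.
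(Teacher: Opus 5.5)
Your proof is correct and follows the paper's argument essentially verbatim. Both pass to the rotation-invariant, atomless difference $G-G'$, whose direction is uniform, and measure the two opposite arcs $(\tfrac\pi2,\tfrac\pi2+\beta)\cup(\tfrac{3\pi}2,\tfrac{3\pi}2+\beta)$, of total length $2\beta$, on which $\cos\Psi\cos(\Psi-\beta)<0$. Your check that the quantile transform $F$ preserves strict orders almost surely is a detail the paper leaves implicit.
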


\begin{proof}
Let $G,G'\sim\rho$ be independent; a pair of points of the permuton is $(X_0,-X_\beta)$ and
$(X_0',-X_\beta')$; by Definition~\ref{def:inv}, an inversion means
$(X_0-X_0')\big((-X_\beta)-(-X_\beta')\big)>0$, that is, $(X_0-X_0')(X_\beta-X_\beta')<0$. The
difference $\Delta:=G-G'$ is rotation-invariant and absolutely continuous, so its direction
$\Theta$ is uniform and $\Delta\ne0$ almost surely. Since
$X_\theta-X_\theta'=|\Delta|\cos(\Theta-\theta)$, the sign of the product equals the sign of
$\cos\Theta\cos(\Theta-\beta)$, which is negative exactly for
$\Theta\in(\tfrac\pi2,\tfrac\pi2+\beta)\cup(\tfrac{3\pi}2,\tfrac{3\pi}2+\beta)$ --- a set of
total length $2\beta$. Hence $\inv(C_\beta^\rho)=\Prob{(X_0-X_0')(X_\beta-X_\beta')<0}=\beta/\pi$.
\end{proof}

Thus the geodesic parameter is the inversion number, and within the family $C^\rho$ it holds
that $\inv(C_{\beta_1}^\rho\odot C_{\beta_2}^\rho)=\min\big(\inv(C_{\beta_1}^\rho)+\inv(C_{\beta_2}^\rho),1\big)$.

\section{The Gaussian family and roots of independence}\label{sec:gauss}

By Example~\ref{ex:families}(a), the family $C^{\cN}$ consists of the standard Gaussian copulas,
$C^{\cN}_\beta=C_{-\cos\beta}$, and $C^{\cN}_{\pi/2}=\Unif$.

\begin{corollary}[sticky roots of $\Unif$]\label{cor:unifroots}
For every $n\ge1$, the permuton $\Unif$ has a unique $n$-th sticky root within the Gaussian
family, the Gaussian copula with correlation $-\cos(\pi/(2n))$:
\[
  \boxed{\ \Unif^{\boxdot 1/n}=C_{\pi/(2n)}^{\cN}=C_{-\cos(\pi/(2n))}\ };
\]
in particular $\sqrt{\Unif}:=\Unif^{\boxdot1/2}=C_{-1/\sqrt2}$. As $n$ grows, the root tends to
the identity element $W$.
\end{corollary}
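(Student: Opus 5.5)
The corollary follows by specializing Corollary~\ref{cor:roots} to the Gaussian profile $\rho=\cN$ and $\beta=\pi/2$. First, by Example~\ref{ex:families}(a), $C^{\cN}_\beta=C_{-\cos\beta}$ and in particular $C^{\cN}_{\pi/2}=C_0=\Unif$. Since $\cN$ is absolutely continuous and rotation-invariant, Theorem~\ref{thm:main} applies. For $n\ge1$, every partial sum $k\pi/(2n)$ with $k\le n$ is at most $\pi/2<\pi$, so the absorption regime is never reached. Iterating the angle-addition law then gives $(C^{\cN}_{\pi/(2n)})^{\odot n}=C^{\cN}_{\pi/2}=\Unif$. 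Hence $C^{\cN}_{\pi/(2n)}=C_{-\cos(\pi/(2n))}$ is an $n$-th sticky root. For $n=2$ this root is $C_{-\cos(\pi/4)}=C_{-1/\sqrt2}$.

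For uniqueness within the family, suppose $(C^{\cN}_{\beta'})^{\odot n}=\Unif$ for some $\beta'\in[0,\pi]$. Theorem~\ref{thm:main} turns the left side into $C^{\cN}_{\min(n\beta',\pi)}$. By the injectivity in Lemma~\ref{lem:center}, $\min(n\beta',\pi)=\pi/2$, so $\beta'=\pi/(2n)$. The value $\pi$ is excluded because $C^{\cN}_\pi=M\ne\Unif$; for instance, their values at $(\tfrac12,\tfrac12)$ are $\tfrac12$ and $\tfrac14$. This is exactly Corollary~\ref{cor:roots} with $\beta=\pi/2\in(0,\pi)$, which can be cited directly.

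For the last claim, as $n\to\infty$ the correlation $-\cos(\pi/(2n))\to-1$. Convergence of the roots to $W$ can be shown without any Gaussian-copula continuity facts. The pair $(X_0,-X_\beta)$ depends continuously on $\beta$ almost surely, and $F$ is continuous, so $(F(X_0),F(-X_\beta))\to(F(X_0),F(-X_0))$ almost surely as $\beta\to0$. Almost sure convergence implies weak convergence of the corresponding permutons, so $C^{\cN}_{\pi/(2n)}\to C^{\cN}_0=W$ by Lemma~\ref{lem:perm}. A weaker quantitative check comes from Lemma~\ref{lem:kendall}: $\inv(C^{\cN}_{\pi/(2n)})=1/(2n)\to0$.

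No step here is a real obstacle. The only point needing care is the convergence argument, where one should invoke almost-sure continuity in $\beta$ rather than appealing to unstated properties of Gaussian copulas.
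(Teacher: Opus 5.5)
Your proposal is correct and matches the paper's proof, which simply applies Corollary~\ref{cor:roots} with $\beta=\pi/2$ and then uses Example~\ref{ex:families}(a) to identify $C^{\cN}_{\pi/(2n)}$ with $C_{-\cos(\pi/(2n))}$. Your argument that $C^{\cN}_{\pi/(2n)}\to W$ uses the pathwise continuity of $(F(X_0),F(-X_\beta))$ in $\beta$ together with Lemma~\ref{lem:perm}. It is valid and fills in the final claim of the corollary, which the paper states without proof.
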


\begin{proof}
Corollary~\ref{cor:roots} with $\beta=\pi/2$ and Example~\ref{ex:families}(a).
\end{proof}

\begin{remark}[a tropical twin of the Ornstein--Uhlenbeck semigroup]\label{rem:OU}
The Gaussian scale $\{C_r\}$ is also closed under the Markov $*$-product of copulas of
Darsow--Nguyen--Olsen~\cite{Darsow1992}, under which correlations multiply: $C_r*C_{r'}=C_{rr'}$.
The sticky product is its tropical twin: the law $r\mapsto rr'$ is replaced by angle addition
at $r=-\cos\beta$.
\end{remark}

\section{Rigidity: only the Gaussian family passes through $\Unif$}\label{sec:rigidity}

\begin{lemma}[Herschel--Maxwell]\label{lem:HM}
Let the distribution of a vector $Z=(Z_1,Z_2)$ be rotation-invariant with independent
coordinates $Z_1,Z_2$. Then $Z\sim\cN(0,\sigma^2I_2)$ for some $\sigma\ge0$; if the distribution
is absolutely continuous, then $\sigma>0$.
\end{lemma}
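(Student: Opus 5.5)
The plan is to use characteristic functions. Let $\varphi_1,\varphi_2$ be the characteristic functions of $Z_1,Z_2$ and $\psi(\xi)=\E e^{i\langle \xi,Z\rangle}$ the joint one. Independence gives $\psi(\xi_1,\xi_2)=\varphi_1(\xi_1)\varphi_2(\xi_2)$. Rotation invariance gives $\psi(\xi)=h(|\xi|)$ for some function $h$.

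First I will record some symmetries. Rotation by $\pi/2$ and by $\pi$ shows $Z_1\eqd Z_2\eqd -Z_1$. Hence $\varphi_1=\varphi_2=:\varphi$, and $\varphi$ is real and even. Setting $\xi_2=0$ gives $\varphi(t)=h(|t|)$. We therefore obtain the functional equation
\[
  \varphi(a)\varphi(b)=\varphi\big(\sqrt{a^2+b^2}\big)\qquad(a,b\in\R).
\]
Put $f(r):=\varphi(\sqrt r)$ for $r\ge0$. Then $f(r)f(s)=f(r+s)$, with $f$ continuous and $f(0)=1$.

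Next I show that $f$ never vanishes. Suppose $r_0$ is the smallest zero of $f$. Then $f(r_0/2)^2=f(r_0)=0$ gives a smaller zero, a contradiction. So $f>0$ everywhere, and $\log f$ is a continuous additive function on $[0,\infty)$. Hence $f(r)=e^{-cr}$, so $\varphi(t)=e^{-ct^2}$. Boundedness $|\varphi|\le1$ forces $c\ge0$. Writing $c=\sigma^2/2$, uniqueness of characteristic functions gives $Z_1\sim\cN(0,\sigma^2)$. By independence $Z\sim\cN(0,\sigma^2I_2)$, where $\sigma=0$ means the point mass at $0$.

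Finally, if the distribution is absolutely continuous it cannot be the point mass $\delta_0$, so $\sigma>0$.

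The only delicate step is the Cauchy-type equation. Continuity of $\varphi$ handles regularity, and the halving argument excludes zeros. No moment assumptions are needed, which is why I work with characteristic functions rather than densities.
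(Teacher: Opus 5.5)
Your proof is correct and follows essentially the same route as the paper: characteristic functions, reduction to the multiplicative Cauchy equation $f(r+s)=f(r)f(s)$ for $f(r)=\varphi(\sqrt r)$, exclusion of zeros by halving, and $c\ge0$ from $|\varphi|\le1$, with absolute continuity ruling out $\sigma=0$. The differences are cosmetic. You pass through the marginal characteristic function, whereas the paper works with the radial profile of the joint one. You also argue via a ``smallest zero,'' which exists because the zero set of $f$ is closed and does not contain $0$; the paper instead lets the halved zeros accumulate at $0$.
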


\begin{proof}
This is a classical characterization~\cite[Theorem~0.0.1]{Bryc1995}; we give a derivation without
any moment assumptions. The characteristic function $\phi(\xi)=\E e^{i\langle\xi,Z\rangle}$
is radial: $\phi(\xi)=\psi(|\xi|)$, $\psi$ is continuous, $\psi(0)=1$, and real-valued, since
rotation by $\pi$ gives $Z\eqd-Z$. Independence of the coordinates gives
$\psi(\sqrt{s^2+t^2})=\psi(|s|)\psi(|t|)$, that is, $f(x):=\psi(\sqrt x)$ satisfies
$f(a+b)=f(a)f(b)$ for $a,b\ge0$. From $f(x)=f(x/2)^2\ge0$ and $f(x_0)=0\Rightarrow f(x_0/2^k)=0\to
f(0)=0$ it follows that $f>0$, so $\log f$ is a continuous additive function, $f(x)=e^{-\lambda x}$,
with $\lambda\ge0$ because $|\phi|\le1$. Thus $\phi(\xi)=e^{-\lambda|\xi|^2}$ is the
characteristic function of $\cN(0,2\lambda I_2)$. Absolute continuity rules out
$\lambda=0$.
\end{proof}

\begin{theorem}[rigidity]\label{thm:rigidity}
For an absolutely continuous rotation-invariant $\rho$,
\[
  \Unif\in\{C_\beta^\rho:\beta\in[0,\pi]\}\quad\Longleftrightarrow\quad
  \rho=\cN(0,\sigma^2 I_2),\ \sigma>0,
\]
and in this case $\Unif=C_{\pi/2}^\rho$.
\end{theorem}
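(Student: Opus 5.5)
The plan is to prove the two directions separately, locating the parameter with Lemma~\ref{lem:center} and obtaining Gaussianity from Herschel--Maxwell (Lemma~\ref{lem:HM}).

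\emph{The direction $\Leftarrow$.} If $\rho=\cN(0,\sigma^2I_2)$, then by Remark~\ref{rem:inv} the family $C^\rho$ depends on $\rho$ only up to scale. Hence $C^\rho=C^{\cN}$, and Example~\ref{ex:families}(a) gives $C^\rho_{\pi/2}=C_0=\Unif$.

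\emph{The direction $\Rightarrow$.} Suppose $C^\rho_\beta=\Unif$ for some $\beta$.

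First I pin down the parameter. By Lemma~\ref{lem:center}, $C^\rho_\beta(\tfrac12,\tfrac12)=\beta/2\pi$, while $\Unif(\tfrac12,\tfrac12)=\tfrac14$. Therefore $\beta=\pi/2$. This also gives the final clause of the theorem, since injectivity makes this the only such $\beta$. Alternatively, Lemma~\ref{lem:kendall} gives the same conclusion, because $\inv(\Unif)=\tfrac12$.

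Next I translate the statement about the copula into independence. $C^\rho_{\pi/2}$ is the copula of the pair $(X_0,-X_{\pi/2})=(G_1,-G_2)$, and its marginals are continuous with distribution function $F$ (Lemma~\ref{lem:proj}(i)). Since the copula is the product copula, Sklar's identity for continuous marginals gives
\[
\Prob{G_1\le a,\,-G_2\le b}=\Unif(F(a),F(b))=F(a)F(b)
\]
for all $a,b$. Concretely, the events $\{G_1\le a\}$ and $\{F(G_1)\le F(a)\}$ coincide up to null sets, as in the proof of Lemma~\ref{lem:perm}. So $G_1$ and $-G_2$ are independent, hence $G_1$ and $G_2$ are independent.

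Finally I apply Herschel--Maxwell. The vector $G$ is rotation-invariant, absolutely continuous, and has independent coordinates. Lemma~\ref{lem:HM} then yields $\rho=\cN(0,\sigma^2I_2)$ with $\sigma>0$.

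\emph{Main obstacle.} The only step needing any care is the passage from ``copula $=\Unif$'' to genuine independence of $G_1$ and $G_2$. This uses continuity of $F$, so that the probability integral transform is valid, and it holds without any strict monotonicity assumption. It must be phrased via distribution functions on rectangles, which determine the joint law. Everything else is immediate from the earlier lemmas.
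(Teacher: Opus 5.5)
Your proof is correct and follows essentially the same route as the paper. For $\Rightarrow$, you pin down $\beta=\pi/2$ from $C^\rho_\beta(\tfrac12,\tfrac12)=\beta/2\pi$, turn $C^\rho_{\pi/2}=\Unif$ into independence of $G_1$ and $G_2$ using continuity of $F$, and conclude with Herschel--Maxwell; for $\Leftarrow$, you use scale invariance and $C^{\cN}_{\pi/2}=\Unif$, and your explicit probability-integral-transform justification spells out what the paper states in one line.
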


\begin{proof}
($\Leftarrow$) The copula does not change under the scaling $G\mapsto\sigma G$
(Remark~\ref{rem:inv}), and
$C_{\pi/2}^{\cN}=\Unif$ (Example~\ref{ex:families}). ($\Rightarrow$) Let
$C_{\beta_0}^\rho=\Unif$. At the square's center, $\beta_0/2\pi=\Unif(\tfrac12,\tfrac12)=\tfrac14$
(Lemma~\ref{lem:center}), that is, $\beta_0=\pi/2$, and the copula of the pair
$(X_0,-X_{\pi/2})=(G_1,-G_2)$ is $\Unif$. For continuous marginals this is equivalent to the
independence $G_1\perp G_2$, and Lemma~\ref{lem:HM} gives $\rho=\cN(0,\sigma^2I_2)$, $\sigma>0$.
\end{proof}

The sticky product plays no part in the proof: rigidity is a property of the projection family
itself. There are no moment assumptions. For instance, the rotation-invariant Cauchy distribution has no
variance and Remark~\ref{rem:corr} does not apply to it, but the theorem does, and $\Unif$ is
not in its family: for $\beta=\pi/2$ the copula equals $\Unif$ only if the coordinates are
independent, and for the Cauchy distribution they are dependent.

\section{Non-uniqueness: a singular root of independence}\label{sec:nonuniqueness}

The uniqueness of roots in Corollaries~\ref{cor:roots} and~\ref{cor:unifroots} is a within-family
statement, and rigidity forbids only \emph{projection} non-Gaussian families through $\Unif$.
Globally the root is not unique: we exhibit a singular square root of $\Unif$.

Let $\varphi(s):=|s-\tfrac12|$, $h(z):=\tfrac12\max(z,0)^2$, and
$\Gamma(t):=\int_0^t\varphi$, so that $\Gamma(t)=\tfrac t2-\tfrac{t^2}2$ for $t\le\tfrac12$,
$\Gamma(t)=\tfrac18+\tfrac{(t-1/2)^2}2$ for $t\ge\tfrac12$, $\Gamma(1)=\tfrac14$. Put
\begin{equation}\label{eq:PQ}
  P^\ast:=\ind_{\{0<|u-v|<1/2\}}\,\rd u\,\rd v+\varphi(s)\,\rd s\big|_{\{v=u\}},\qquad
  Q:=\ind_{\{0<|u+v-1|<1/2\}}\,\rd u\,\rd v+\varphi(s)\,\rd s\big|_{\{u+v=1\}},
\end{equation}
where the singular parts are the images of the measure $\varphi(s)\,\rd s$ on $[0,1]$ under
$s\mapsto(s,s)$ and $s\mapsto(s,1-s)$. The measure $Q$ is the image of $P^\ast$ under the
reflection $(x,y)\mapsto(x,1-y)$ (Fig.~\ref{fig:pstar}).

\begin{figure}[ht]\centering
\begin{tikzpicture}[scale=2.0,>=Latex]
  \fill[blue!12] (0,0.5)--(0.5,0)--(1,0)--(1,0.5)--(0.5,1)--(0,1)--cycle;
  \draw (0,0) rectangle (1,1);
  \draw[very thick,red] (0,1)--(1,0);
  \node[red] at (0.8,0.4) {\small $\varphi$};
  \node[blue!60] at (0.3,0.45) {\small $1$};
  \node at (0.5,-0.12) {\small $Q$};
\end{tikzpicture}
\caption{The singular root $Q$: density $1$ on the antidiagonal strip
$\{0<|u+v-1|<\tfrac12\}$ (total mass of the strip $\tfrac34$) and mass $\varphi(s)=|s-\tfrac12|$
on the antidiagonal (total mass $\tfrac14$).}
\label{fig:pstar}
\end{figure}

\begin{lemma}[copulas of $P^\ast$ and $Q$]\label{lem:Cpstar}
$P^\ast$ and $Q$ are permutons,
\begin{equation}\label{eq:Cpstar}
  C_{P^\ast}(u,v)=uv-h(u-\tfrac12)-h(v-\tfrac12)+h(u-\tfrac12-v)+h(v-\tfrac12-u)+\Gamma(\min(u,v)),
\end{equation}
$C_{P^\ast}$ is symmetric, $C_Q(u,v)=u-C_{P^\ast}(u,1-v)$, and $Q(\{u+v=1\})=\tfrac14$, that is,
$Q$ is singular.
\end{lemma}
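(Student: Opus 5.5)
The plan is a direct computation. First, $P^\ast$ is a permuton: for fixed $u$, the vertical section of the absolutely continuous part has length $\Leb\{v:0<|u-v|<\tfrac12\}=1-|u-\tfrac12|$. Indeed, the interval $(u-\tfrac12,u+\tfrac12)$ clipped to $[0,1]$ has length $\tfrac12+\min(u,\tfrac12,1-u)$, and this equals $1-\varphi(u)$ in both cases $u\le\tfrac12$ and $u\ge\tfrac12$. The singular part contributes density $\varphi(u)$ to the first marginal, so the marginal density is $1$. By the symmetry $u\leftrightarrow v$ of $P^\ast$ the second marginal is uniform as well. The reflection $(x,y)\mapsto(x,1-y)$ preserves uniform marginals, so $Q$ is a permuton. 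The identity $C_Q(u,v)=Q([0,u]\times[0,v])=P^\ast([0,u]\times[1-v,1])=u-C_{P^\ast}(u,1-v)$ follows from $P^\ast(\{y=1-v\})=0$. Finally, $Q(\{u+v=1\})=\int_0^1\varphi=\Gamma(1)=\tfrac14$, so $Q$ is singular.

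For \eqref{eq:Cpstar} I will compute $P^\ast([0,u]\times[0,v])$ as the sum of two parts.
\begin{itemize}
\item The singular part contributes $\int_0^{\min(u,v)}\varphi=\Gamma(\min(u,v))$.
\item For the absolutely continuous part I write $\ind_{\{0<|u-v|<1/2\}}=1-\ind_{\{u-v\ge1/2\}}-\ind_{\{v-u\ge1/2\}}$ almost everywhere, since the diagonal is Lebesgue-null. The area of $[0,u]\times[0,v]$ gives $uv$. It remains to compute the areas of $T_1:=[0,u]\times[0,v]\cap\{x-y\ge\tfrac12\}$ and of its mirror image $T_2$.
\end{itemize}

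The key step is the area of $T_1$ by inclusion–exclusion on the triangle $\{x\le u,\ y\ge0,\ x-y\ge\tfrac12\}$, which is a right isosceles triangle of area $h(u-\tfrac12)$. From it I remove the part with $y>v$, namely $\{x\le u,\ y>v,\ x-y\ge\tfrac12\}$. This is again a right isosceles triangle, with area $h(u-v-\tfrac12)$. There is a subtlety in this removal. The constraint $y\ge0$ is automatic there because $v\ge0$, and the constraint $x\ge0$ is automatic inside the $T_1$ region because $x\ge y+\tfrac12$. Hence
\[
  \Leb(T_1)=h(u-\tfrac12)-h(u-\tfrac12-v),
\]
and symmetrically $\Leb(T_2)=h(v-\tfrac12)-h(v-\tfrac12-u)$. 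Summing the pieces yields exactly \eqref{eq:Cpstar}.

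Symmetry of $C_{P^\ast}$ is then visible from the formula, or directly from the symmetry of $P^\ast$. The main obstacle is only the careful bookkeeping of the triangle regions and the clipping at the square's boundary. I will sanity-check the result at the corners: at $v=1$, all terms cancel to give $u$, using $\Gamma(u)$ against the $h$ terms. I will also check at $u=v=\tfrac12$, where the value should be $\tfrac14+\Gamma(\tfrac12)=\tfrac38$.
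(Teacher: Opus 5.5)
Your proposal is correct and follows the paper's route: the singular part contributes $\Gamma(\min(u,v))$, the strip area is $uv$ minus the two corner triangles $h(u-\tfrac12)-h(u-\tfrac12-v)$ and $h(v-\tfrac12)-h(v-\tfrac12-u)$, and $C_Q$ and the singular mass $\tfrac14$ of $Q$ come from the reflection. The only difference is cosmetic: you check the uniform marginals directly from the section length $1-\varphi(u)$ plus the singular density $\varphi(u)$, whereas the paper reads them off from $C_{P^\ast}(u,1)=u$ in the formula (which you use only as a sanity check).
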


\begin{proof}
The singular mass of $P^\ast$ in $[0,u]\times[0,v]$ equals $\Gamma(\min(u,v))$; the area of the
strip $\{|x-y|<\tfrac12\}$ inside the same rectangle equals $uv$ minus the areas of two corner
triangles, $h(u-\tfrac12)-h(u-\tfrac12-v)$ and $h(v-\tfrac12)-h(v-\tfrac12-u)$. This
gives~\eqref{eq:Cpstar}; substituting $v=1$ gives $C_{P^\ast}(u,1)=u$ in both cases
$u\lessgtr\tfrac12$, so $P^\ast$ is a permuton (symmetry is obvious). The reflection preserves
the Lebesgue marginals, so $Q$ is a permuton, and
$C_Q(u,v)=P^\ast([0,u]\times[1-v,1])=u-C_{P^\ast}(u,1-v)$. The mass of $Q$ on the antidiagonal
equals $\Gamma(1)=\tfrac14$.
\end{proof}

By formula~\eqref{eq:boxdot} and Lemma~\ref{lem:Cpstar},
\begin{equation}\label{eq:QQ}
  (C_Q\odot C_Q)(u,v)=\min_t\big[t-C_{P^\ast}(t,1-v)+u-C_{P^\ast}(u,t)\big]
  =u-\max_{t\in[0,1]}\Psi_{u,\,1-v}(t),
\end{equation}
where for $a,b\in[0,1]$ we put $\Psi_{a,b}(t):=C_{P^\ast}(t,a)+C_{P^\ast}(t,b)-t$ (the symmetry
of $C_{P^\ast}$ was used). Thus $Q\boxdot Q=\Unif$ is equivalent to the identity
$\max_t\Psi_{a,b}=ab$.

\begin{lemma}[concavity]\label{lem:concave}
For each $w\in[0,1]$ the function $t\mapsto C_{P^\ast}(t,w)$ is concave on $[0,1]$.
\end{lemma}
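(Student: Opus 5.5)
The cleanest route avoids the explicit formula~\eqref{eq:Cpstar} and works with the derivative of $t\mapsto C_{P^\ast}(t,w)$, read off from the measure. For a permuton $P$ with no atoms, $\partial_t C_P(t,w)$ at a point $t$ equals the conditional probability $P\{Y\le w\mid X=t\}$, that is, the mass that the disintegration of $P$ along the vertical line $\{x=t\}$ puts on $[0,w]$. A continuous function whose right derivative is nonincreasing is concave. We therefore plan to show that $k_w(t):=P^\ast\{Y\le w\mid X=t\}$ is nonincreasing in $t$, which says that the conditional laws of $Y$ given $X=t$ increase stochastically with $t$.

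The first step is to compute the conditional law of $Y$ given $X=t$ from~\eqref{eq:PQ}. The absolutely continuous part contributes Lebesgue measure on the set $\{y\in[0,1]:0<|y-t|<\tfrac12\}$. For $t\le\tfrac12$ this is the interval $[0,t+\tfrac12]$, and for $t\ge\tfrac12$ it is $[t-\tfrac12,1]$; in both cases it has length $1-\varphi(t)$. The singular part contributes an atom of mass $\varphi(t)$ at $y=t$, and the two masses add up to $1$. So the conditional law is
\[
\nu_t=\Leb|_{[\max(0,t-1/2),\,\min(1,t+1/2)]}+\varphi(t)\,\delta_t ,
\]
and $C_{P^\ast}(t,w)=\int_0^t\nu_s([0,w])\,\rd s$. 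This formula needs only a quick check that it reproduces the mass of $[0,t]\times[0,w]$, which is immediate from~\eqref{eq:PQ} by Fubini.

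The second step is monotonicity, and it is the main obstacle. We need $k_w(t)=\nu_t([0,w])$ to be nonincreasing in $t$ for each fixed $w$, and we split into the two halves.

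\emph{Case $t\le\tfrac12$.} The measure $\nu_t$ is uniform on $[0,t+\tfrac12]$ plus an atom of mass $\tfrac12-t$ at $t$. For $w\ge t$ this gives $k_w(t)=\min(w,t+\tfrac12)+\tfrac12-t$. If $w\ge t+\tfrac12$ the value is $1$, which is constant. Otherwise it is $w+\tfrac12-t$, which is decreasing in $t$. For $w<t$ the atom is not counted and $k_w(t)=w$. Moving $t$ past $w$ drops the value from $w+\tfrac12-t$ to $w$, so the jump is downward.

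\emph{Case $t\ge\tfrac12$.} The measure $\nu_t$ is uniform on $[t-\tfrac12,1]$ plus an atom of mass $t-\tfrac12$ at $t$. For $w<t$ we get $k_w(t)=\max(0,w-t+\tfrac12)$, which is nonincreasing in $t$. For $w\ge t$ we get $w-t+\tfrac12+t-\tfrac12=w$, a constant. At the crossing $t=w$ the value drops from $w$ to $\tfrac12$ or less, again downward.

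The two halves also glue correctly at $t=\tfrac12$, where the atom has mass zero and both descriptions coincide. Hence $k_w$ is nonincreasing on $[0,1]$.

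The last step is to conclude. Since $C_{P^\ast}(\cdot,w)$ is the integral of a nonincreasing function, it is concave on $[0,1]$. As a sanity check, one can instead differentiate~\eqref{eq:Cpstar} in $u$: the second derivatives of the $h$-terms and of $\Gamma$ are step functions, and their signs should confirm the same piecewise pattern.
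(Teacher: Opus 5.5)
Your proof is correct and is essentially the paper's argument: both identify $\partial_t C_{P^\ast}(t,w)$ with the length of the strip's cross-section plus the atom term $\varphi(t)\ind[t\le w]$. Both then deduce concavity from this function of $t$ being nonincreasing, with its only jump downward by $\varphi(w)$ at $t=w$. The differences are cosmetic: you get the derivative by disintegrating $P^\ast$ along vertical lines (Fubini) rather than by differentiating \eqref{eq:Cpstar}, and you check monotonicity through explicit closed forms in the cases $t\lessgtr\tfrac12$, $t\lessgtr w$ rather than through the sign of the a.e.\ second derivative.
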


\begin{proof}
From~\eqref{eq:Cpstar}, for $t\ne w$
$\partial_tC_{P^\ast}(t,w)=L_w(t)+\varphi(t)\ind[t<w]$, where
$L_w(t)=\big(\min(w,t+\tfrac12)-\max(0,t-\tfrac12)\big)^+$ is the length of the cross-section of
the strip. For $t>w+\tfrac12$ the cross-section is empty and $\partial_tC_{P^\ast}=0$; for
$t<w+\tfrac12$, almost everywhere,
$\partial_t^2C_{P^\ast}=\ind[t<w-\tfrac12]-\ind[t>\tfrac12]+\varphi'(t)\ind[t<w]\le0$ (consider
$t\lessgtr\tfrac12$ with $\varphi'=\mp1$), and the only jump of the derivative is at $t=w$,
downward by $\varphi(w)\ge0$. The derivative is nonincreasing, so the function is concave.
\end{proof}

\begin{proposition}\label{prop:pstarroot}
$\max_{t\in[0,1]}\Psi_{a,b}(t)=ab$ for all $a,b\in[0,1]$.
\end{proposition}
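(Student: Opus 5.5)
Since $\Psi_{a,b}$ is a sum of two concave functions (Lemma~\ref{lem:concave}) and the linear function $-t$, it is concave on $[0,1]$. The plan is therefore to find, for each $(a,b)$, a point $t^\ast\in[0,1]$ at which $0$ lies in the superdifferential of $\Psi_{a,b}$, that is, $\Psi'_{a,b}(t^\ast-)\ge0\ge\Psi'_{a,b}(t^\ast+)$ (with the obvious one-sided conventions at the endpoints). Then verify $\Psi_{a,b}(t^\ast)=ab$ directly from~\eqref{eq:Cpstar}. Concavity turns the local condition into the global statement $\max_t\Psi_{a,b}=\Psi_{a,b}(t^\ast)$, so no separate upper bound is needed.

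\textbf{Reductions.} $\Psi_{a,b}$ is symmetric in $(a,b)$, so assume $a\le b$. The edge cases give a sanity check:
\begin{itemize}
\item $a=0$: $\Psi=-t$, maximized at $t^\ast=0$.
\item $a=b=1$: $\Psi=t$, maximized at $t^\ast=1$.
\item $a=b=\tfrac12$: $\Psi=t-t^2$, maximized at $t^\ast=\tfrac12$ with value $\tfrac14$.
\end{itemize}

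\textbf{Derivative.} From the proof of Lemma~\ref{lem:concave}, for $t\notin\{a,b\}$,
\[
\Psi'_{a,b}(t)=L_a(t)+L_b(t)+\varphi(t)\big(\ind[t<a]+\ind[t<b]\big)-1 .
\]
This is piecewise linear with breakpoints among $t\in\{a,b,\tfrac12,a\pm\tfrac12,b\pm\tfrac12\}$. It has downward jumps of size $\varphi(a)$ and $\varphi(b)$ at $t=a$ and $t=b$.

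\textbf{Case split and candidate maximizer.} Split according to the positions of $a$ and $b$ relative to $\tfrac12$: $a\le b\le\tfrac12$, then $a\le\tfrac12\le b$, then $\tfrac12\le a\le b$. A secondary split by the sign of $b-a-\tfrac12$ is needed in the middle case. In each region, write $\Psi'$ explicitly and locate its sign change.

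I expect that in many regions the sign change happens at one of the jumps, i.e.\ $t^\ast\in\{a,b\}$, where the singular diagonal mass makes the subgradient condition an inequality rather than an equation. Elsewhere it should happen at an interior zero of a linear piece. The first attempt will be to test $t^\ast=a$ and $t^\ast=b$ against the jump inequalities and to fall back on solving the linear equation only where these fail.

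Having $t^\ast$, substitute into~\eqref{eq:Cpstar}. The terms $h(\cdot)$ and $\Gamma(\min(t^\ast,\cdot))$ are explicit quadratics, and the claim reduces to a polynomial identity checked region by region.

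\textbf{Main obstacle.} The main obstacle is the bookkeeping of which $h$-terms are active in each region, especially in the mixed case $a<\tfrac12<b$ with $b-a$ near $\tfrac12$. Here the strip cross-sections $L_a,L_b$ change form at $t=b-\tfrac12$ and $t=a+\tfrac12$. To control errors, I will check continuity of the resulting formula for $\max\Psi$ across region boundaries and compare with the three edge cases above.
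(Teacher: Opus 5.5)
Your plan matches the paper's proof: concavity of $\Psi_{a,b}$ from Lemma~\ref{lem:concave}, then a point $t^\ast$ with $\Psi_{a,b}(t^\ast)=ab$ and $0$ in the superdifferential, split by the position of $a\le b$ relative to $\tfrac12$. Your search lands on the paper's explicit choice $t^\ast=\operatorname{median}\{a,b,a+b-\tfrac12\}$: $t^\ast=a$ for $b\le\tfrac12$ and $t^\ast=b$ for $a\ge\tfrac12$. In the mixed case $t^\ast$ is the zero of $\Psi_{a,b}'(t)=a+b-\tfrac12-t$, which holds on all of $(\max(a,b-\tfrac12),\min(b,a+\tfrac12))$, so the split by the sign of $b-a-\tfrac12$ is unnecessary. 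One small slip: for $a=0$ one has $\Psi_{0,b}(t)=C_{P^\ast}(t,b)-t$, not $-t$, but the conclusion survives since this is $\le 0=\Psi_{0,b}(0)$.
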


\begin{proof}
By Lemma~\ref{lem:concave}, $\Psi_{a,b}$ is concave, so it suffices to exhibit a point $t^\ast$
with $\Psi_{a,b}(t^\ast)=ab$ and $0\in\partial\Psi_{a,b}(t^\ast)$. Let $a\le b$ and
$t^\ast:=\operatorname{median}\{a,\,b,\,a+b-\tfrac12\}\in[a,b]$. The one-sided derivatives are
$\Psi'_\pm(t^\ast)=L_a(t^\ast)+L_b(t^\ast)-1+\varphi(t^\ast)\big(\ind_\pm[t^\ast<a]+\ind_\pm[t^\ast<b]\big)$.
We consider three regimes according to the signs of $a-\tfrac12$, $b-\tfrac12$: for
$a\le b\le\tfrac12$ ($t^\ast=a$)
$\Psi(a)=(ab+\tfrac a2-\tfrac{a^2}2)+(\tfrac{a^2}2+\tfrac a2)-a=ab$, $\Psi'_+(a)=b-\tfrac12\le0$,
$\Psi'_-(a)=b-a\ge0$; for $a\le\tfrac12\le b$ ($t^\ast=a+b-\tfrac12$, a point of
differentiability) $\Psi'(t^\ast)=0$ and $\Psi(t^\ast)=ab$; the regime $a,b\ge\tfrac12$ is
symmetric to the first. The case analysis was carried out for pairs off the regime boundaries
($a,b\ne\tfrac12$, $a\ne b$, $a+b\ne1$); these form a dense set, and both sides of the identity
are continuous, hence it holds everywhere.
\end{proof}

\begin{theorem}[global non-uniqueness of $\sqrt{\Unif}$]\label{thm:nonuniqueness}
The permuton $\Unif$ has at least two distinct sticky square roots: the singular $Q$ and the
absolutely continuous Gaussian permuton with copula $C_{-1/\sqrt2}$.
\end{theorem}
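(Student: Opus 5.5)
The proof assembles results already established. First, the Gaussian root: by Corollary~\ref{cor:unifroots} with $n=2$ (that is, Theorem~\ref{thm:main} with $\beta_1=\beta_2=\pi/4$ together with Example~\ref{ex:families}(a)), we have $C_{-1/\sqrt2}\odot C_{-1/\sqrt2}=C^{\cN}_{\pi/4}\odot C^{\cN}_{\pi/4}=C^{\cN}_{\pi/2}=\Unif$. Example~\ref{ex:families}(a) also gives that this permuton has a positive density on the open square, so it is absolutely continuous.

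Second, the singular root: by Lemma~\ref{lem:Cpstar}, $Q$ is a permuton. Substituting $C_Q(u,v)=u-C_{P^\ast}(u,1-v)$ into Definition~\ref{def:box} gives \eqref{eq:QQ}, namely $(C_Q\odot C_Q)(u,v)=u-\max_t\Psi_{u,1-v}(t)$. I would check this reduction carefully. The first factor contributes $C_Q(t,v)=t-C_{P^\ast}(t,1-v)$. The second contributes $C_Q(u,1-t)=u-C_{P^\ast}(u,t)$, and by symmetry of $C_{P^\ast}$ this equals $u-C_{P^\ast}(t,u)$. Summing, the expression under the minimum is $u-\Psi_{u,1-v}(t)$.

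Proposition~\ref{prop:pstarroot} with $a=u$, $b=1-v$ then yields
\[
  (C_Q\odot C_Q)(u,v)=u-u(1-v)=uv=\Unif(u,v).
\]
Hence $Q\boxdot Q=\Unif$.

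Third, the two roots are distinct. By Lemma~\ref{lem:Cpstar}, $Q$ puts mass $\tfrac14$ on the antidiagonal $\{u+v=1\}$, a Lebesgue-null set. The Gaussian permuton gives that set zero mass. Copulas determine measures, so $C_Q\ne C_{-1/\sqrt2}$.

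For a quick extra check, one can compare values at the center. Lemma~\ref{lem:center} gives $C_{-1/\sqrt2}(\tfrac12,\tfrac12)=\tfrac18$. Formula~\eqref{eq:Cpstar} gives $C_{P^\ast}(\tfrac12,\tfrac12)=\tfrac14+\Gamma(\tfrac12)=\tfrac38$, hence $C_Q(\tfrac12,\tfrac12)=\tfrac12-\tfrac38=\tfrac18$. The values coincide, so the center is not a distinguishing point and the singular mass is the argument to use.

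The main obstacle lies in the already-proved Proposition~\ref{prop:pstarroot}. Here the only delicate point is bookkeeping: confirming that the symmetry and reflection substitutions produce exactly $\Psi_{u,1-v}$, with the correct arguments, over the full range $t\in[0,1]$.
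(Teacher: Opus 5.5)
Your proposal is correct and matches the paper's proof: the Gaussian root comes from Corollary~\ref{cor:unifroots}, and $Q\boxdot Q=\Unif$ follows from \eqref{eq:QQ} and Proposition~\ref{prop:pstarroot} with $a=u$, $b=1-v$. Distinctness uses the mass $\tfrac14$ that $Q$ puts on the antidiagonal, and your checks of the reduction to $\Psi_{u,1-v}$ and of the common center value $\tfrac18$ (so the center cannot distinguish the roots) are accurate.
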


\begin{proof}
$Q\boxdot Q=\Unif$ by~\eqref{eq:QQ} and Proposition~\ref{prop:pstarroot}: $u-u(1-v)=uv$. The
second root is Corollary~\ref{cor:unifroots}. The roots are distinct: $Q$ carries mass $\tfrac14$
on the antidiagonal, whereas the Gaussian permuton is absolutely continuous.
\end{proof}

\begin{remark}
The uniqueness of roots in Corollaries~\ref{cor:roots} and~\ref{cor:unifroots} is a statement
about roots within a geodesic; outside it the root is not unique already for $\Unif$.
\end{remark}

\section{The Archimedean family and random sorting networks}\label{sec:rsn}

Let $\rho=\Arch$ be the Archimedean measure of Example~\ref{ex:families}(b),
$G\sim\Arch$, and $X_\theta=\langle G,u_\theta\rangle$ as in \S\ref{sec:family}. For
$s\in[0,1]$ denote by $\Arch_s$ the joint distribution of the pair of projections
$(X_0,X_{\pi s})$ onto directions at angle $\pi s$; in the coordinates $G=(X,Y)$ this is the
pair $(X,\ X\cos\pi s+Y\sin\pi s)$.

\begin{theorem}[Angel--Holroyd--Romik--Vir\'ag, Dauvergne~\cite{AHRV2007,Dauvergne2022}]\label{thm:ahrv}
For each $s\in[0,1]$, the permutation of the uniformly random sorting network of size $n$ at
time $s$, regarded as a permuton, converges as $n\to\infty$, weakly in probability, to the
affine image of $\Arch_s$ in the unit square.
\end{theorem}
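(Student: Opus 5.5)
This theorem is not proved in the paper; it is quoted from~\cite{AHRV2007,Dauvergne2022}. If I had to reconstruct its proof, I would follow Dauvergne's strategy and reduce it to three ingredients: a symmetry of the uniform sorting network, a deterministic description of particle trajectories, and an identification of the resulting measure via its projections.

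\emph{Step 1: compactness and reduction to trajectories.} Permutons form a compact space in the weak topology, so it suffices to show that every subsequential limit in distribution of the random permuton at time $s$ is the deterministic affine image of $\Arch_s$.

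Label particle $i$ by its rescaled starting position $x_i=2i/n-1\in[-1,1]$, and let $\gamma_i(s)$ be its rescaled position at time $s$. The permuton at time $s$ is then the empirical measure of the pairs $(\gamma_i(0),\gamma_i(s))$.

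\emph{Step 2: the symmetries.} I will use two facts.
\begin{enumerate}
  \item[(a)] The law of the uniformly random sorting network is stationary. By AHRV, the cyclic shift of a reduced word of $w_0$, with the first letter conjugated by $w_0$ and moved to the end, is a bijection on sorting networks. Consequently, the swap process is stationary in time and symmetric under $i\mapsto n+1-i$.
  \item[(b)] Every time-$s$ marginal of the empirical trajectory measure is uniform on $[-1,1]$, because positions form a permutation.
\end{enumerate}

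\emph{Step 3: the sine-curve theorem, which I expect to be the main obstacle.} The goal is to show that, with probability tending to one, almost all trajectories are uniformly close to curves of the form $\gamma(s)=X\cos\pi s+Y\sin\pi s$. Here $(X,Y)$ ranges over the disk, since $|\gamma|\le1$.

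I would try Dauvergne's route. First, use the local limit of the swap process and the AHRV octagon bound to get equicontinuity of trajectories, so that subsequential limits of the random trajectory measure exist as measures on paths. Next, combine stationarity with a conservation or ``minimal energy'' inequality for the limit paths. The key estimate is that the average of $\int(\gamma')^2$ is at most $\pi^2$ times the average of $\int\gamma^2$; the paper's Remark~\ref{rem:whygeo}(iii) and~\cite{RVV} point to this mechanism. Equality in this Wirtinger-type bound forces $\gamma''=-\pi^2\gamma$, that is, sine curves. This is where most of the technical work lies.

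\emph{Step 4: identifying the measure.} Each sine curve determines the vector $G=(X,Y)$. The time-$s$ permuton is the law of $(\langle G,u_0\rangle,\langle G,u_{\pi s}\rangle)$, which is exactly $\Arch_s$ once the law of $G$ is known.
\begin{enumerate}
  \item[(i)] Stationarity under the time shift, transported to the limit, rotates the phase. This should make the law of $G$ invariant under all rotations: first by multiples of small angles, then by continuity under all of them.
  \item[(ii)] By Step 2(b), every projection $\langle G,u_\theta\rangle$ is uniform on $[-1,1]$.
  \item[(iii)] A measure on the plane is determined by its one-dimensional projections (Cram\'er--Wold, via characteristic functions restricted to lines).
  \item[(iv)] By Archimedes' theorem (Example~\ref{ex:families}(b)), $\Arch$ is rotation-invariant with all projections uniform on $[-1,1]$, so the law of $G$ must be $\Arch$.
\end{enumerate}

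Since the limit is deterministic, convergence in distribution upgrades to convergence in probability. Combined with Step 1, this gives the theorem.
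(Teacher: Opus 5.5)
The paper does not prove this theorem; it is quoted from~\cite{AHRV2007,Dauvergne2022}, so your sketch can only be measured against that literature. Your architecture is right, and it is the Rahman--Vir\'ag--Vizer framework~\cite{RVV} on which Dauvergne's proof rests. For a limit path with $\gamma(1)=-\gamma(0)$ and $\gamma'\in L^2$, the antiperiodic Wirtinger inequality gives $\int_0^1(\gamma')^2\ge\pi^2\int_0^1\gamma^2$, with equality only for $X\cos\pi t+Y\sin\pi t$. Uniform time marginals give $\E\int_0^1\gamma^2=\tfrac13$. Hence sine curves follow as soon as $\E\int_0^1(\gamma')^2\le\pi^2/3$. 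Steps~1, 2 and~4 are fine. Step~4(i) is not even needed: uniform projections, Cram\'er--Wold and Archimedes already force the law of $G$ to be $\Arch$.

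The genuine gap is Step~3. The upper bound $\E\int_0^1(\gamma')^2\le\pi^2/3$ is the entire content of the theorem, and the mechanism you suggest cannot give it. The minimal-energy property of~\cite{RVV} (Remark~\ref{rem:whygeo}(iii)) is the Wirtinger \emph{lower} bound, which points the wrong way. The soft properties you draw on are stationarity, uniform marginals, passage from identity to reversal, every pair of paths crossing exactly once, and regular paths. All of these also hold for the limits of the geometric sorting networks of Proposition~\ref{prop:geom}, for any profile $\rho$. For $\rho=\cN$ the limit paths are $\gamma(t)=2\Phi(\langle G,u_{\pi t}\rangle)-1$. Since $\langle G,u_{\pi t}\rangle$ and $\langle G,u_{\pi t+\pi/2}\rangle$ are independent standard normals, $\E\int_0^1(\gamma')^2=4\pi^2\int_\R(\Phi')^3=2\pi/\sqrt3>\pi^2/3$, and the limit is the Gaussian family, not $\Arch_s$. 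So the bound must use information specific to the \emph{uniform} measure, and the local limit has to enter the energy estimate itself, not only the equicontinuity.

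In the literature this input is the local limit of uniform sorting networks (Angel--Dauvergne--Holroyd--Vir\'ag; Gorin--Rahman). There particles have asymptotic local speeds with the arcsine law, at position $x$ on $[-\pi\sqrt{1-x^2},\pi\sqrt{1-x^2}]$. Its mean square $\pi^2(1-x^2)/2$ averages over uniform $x$ to exactly $\pi^2/3$. Dauvergne's main work is the local-to-global step showing that derivatives of macroscopic limit paths are governed by these local speeds, so that convexity yields the energy bound. That step, together with the $\pi$-Lipschitz bound of Dauvergne--Vir\'ag, also supplies $\gamma'\in L^2$. The H\"older-$\tfrac12$ equicontinuity from the octagon bound does not give this.
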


We denote this limit by $\mu_s$: it is the limit shape of the permutation of the network at time
$s$. The theorem is due to the cited authors; our contribution is Corollary~\ref{cor:rsn}.

\begin{lemma}\label{lem:mutau}
The copula of $\mu_s$ equals $C_{\pi(1-s)}^{\Arch}$; in particular $\mu_0=M$ and $\mu_1=W$.
\end{lemma}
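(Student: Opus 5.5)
We compute the copula of $\mu_s$ directly from the description of $\mu_s$ as an image of $\Arch_s$, then identify that pair of projections with the one in Definition~\ref{def:proj}.

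\emph{Fixing the convention.} By Theorem~\ref{thm:ahrv}, $\mu_s$ is the image of the pair $(X_0,X_{\pi s})$ under a coordinatewise affine map onto the unit square. The convention here is the graph convention of~\cite{AHRV2007}, so no reflection $v\mapsto1-v$ is applied. This matches the claimed endpoint $\mu_0=M$: the identity permutation has a diagonal graph.

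Since all projections of $\Arch$ are uniform on $[-1,1]$ (Example~\ref{ex:families}(b)), the affine map is $(x,y)\mapsto\big(F(x),F(y)\big)$ with $F(x)=(x+1)/2$. This map is strictly increasing in each coordinate. By Remark~\ref{rem:inv}, the copula of $\mu_s$ is therefore the copula of the pair $(X_0,X_{\pi s})$. It remains to show that this pair has the same joint law as $(X_0,-X_{\pi(1-s)})$, whose copula is $C^{\Arch}_{\pi(1-s)}$ by Definition~\ref{def:proj}.

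\emph{Identifying the pair.} First, $u_{\theta+\pi}=-u_\theta$ gives the pointwise identity $X_{\pi s}=-X_{\pi s+\pi}$. Next, the density of $\Arch$ is radial, so $\Arch$ is invariant under the reflection $(G_1,G_2)\mapsto(G_1,-G_2)$. This reflection sends $X_\theta$ to $X_{-\theta}$, so
\[
  (X_0,X_{\pi s+\pi})\eqd(X_0,X_{-\pi s-\pi})=(X_0,X_{\pi-\pi s}),
\]
where the last equality holds because angles are taken mod $2\pi$. Combining the two steps,
\[
  (X_0,X_{\pi s})=(X_0,-X_{\pi s+\pi})\eqd(X_0,-X_{\pi(1-s)}).
\]
Hence the copula of $\mu_s$ equals $C^{\Arch}_{\pi(1-s)}$. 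The endpoints then follow from Lemma~\ref{lem:perm}: $s=0$ gives $C_\pi=M$, and $s=1$ gives $C_0=W$.

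\emph{Main obstacle.} The delicate point is orientation. Lemma~\ref{lem:proj}(ii) alone only allows shifting both angles by the same amount. Using it, $(X_0,X_{\pi s-\pi})$ becomes $(X_{\pi-\pi s},X_0)$, which has the coordinates in the wrong order. The extra reflection symmetry is what fixes this, and it is available because $\Arch$ is radial. I would also state explicitly that $\mu_s$ is taken in the graph convention without the $v\mapsto1-v$ flip. If the flip were applied, the result would be $C_{\pi s}$ instead, so checking the endpoint $\mu_0=M$ is the right consistency test.
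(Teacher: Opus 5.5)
Your proof is correct and follows the same route as the paper: you reduce via Remark~\ref{rem:inv} to the copula of $(X_0,X_{\pi s})$ and identify this pair with $(X_0,-X_{\pi(1-s)})$ from Definition~\ref{def:proj}. The paper does that identification in one line (``$C^\rho_\beta$ corresponds to the angle $\pi-\beta$''), tacitly using that the joint law depends only on the unsigned angle; your reflection step is what justifies this, since rotations alone (Lemma~\ref{lem:proj}(ii)) only identify $(X_0,-X_{\pi(1-s)})=(X_0,X_{-\pi s})$ with the coordinate swap of $(X_0,X_{\pi s})$.
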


\begin{proof}
By Remark~\ref{rem:inv}, the copula of $\mu_s$ is the copula of the pair $(X_0,X_{\pi s})$, the projections onto directions at angle
$\pi s$. By Definition~\ref{def:proj}, the copula $C_\beta^\rho$ corresponds to the angle
$\pi-\beta$; hence $\beta=\pi(1-s)$.
\end{proof}

\begin{corollary}[sticky products of the limit shapes]\label{cor:rsn}
For all $s_1,s_2\in[0,1]$
\[
  \boxed{\ \mu_{s_1}\boxdot\mu_{s_2}=\mu_{\max(s_1+s_2-1,\,0)}\ }.
\]
That is, the limit shapes of the permutations of the sorting network form a geodesic, traversed
in reverse time
from the identity element $\mu_1=W$ to the absorbing element $\mu_0=M$, with $\inv(\mu_s)=1-s$.
\end{corollary}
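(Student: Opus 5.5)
The corollary is a direct translation of Theorem~\ref{thm:main} through Lemma~\ref{lem:mutau}. Since $\boxdot$ and $\odot$ are one operation in two notations ($C_{P\boxdot Q}=C_P\odot C_Q$), and a copula determines its permuton, it suffices to compare copulas. By Lemma~\ref{lem:mutau}, $C_{\mu_s}=C^{\Arch}_{\pi(1-s)}$. Put $\beta_m:=\pi(1-s_m)\in[0,\pi]$ for $m=1,2$.

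Theorem~\ref{thm:main} applies because $\Arch$ is absolutely continuous and rotation-invariant (Example~\ref{ex:families}(b)). It gives
\[
  C_{\mu_{s_1}}\odot C_{\mu_{s_2}}=C^{\Arch}_{\beta_1}\odot C^{\Arch}_{\beta_2}=C^{\Arch}_{\min(\beta_1+\beta_2,\,\pi)} .
\]
It remains to match parameters. We have $\min(\beta_1+\beta_2,\pi)=\pi\min(2-s_1-s_2,\,1)=\pi\bigl(1-\max(s_1+s_2-1,0)\bigr)$. Applying Lemma~\ref{lem:mutau} again with $s=\max(s_1+s_2-1,0)\in[0,1]$ identifies the right-hand side as $C_{\mu_{\max(s_1+s_2-1,0)}}$, which is the boxed identity.

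For the remaining assertions, the endpoints $\mu_1=W$ and $\mu_0=M$ are already in Lemma~\ref{lem:mutau}. The inversion number follows from Lemma~\ref{lem:kendall}: $\inv(\mu_s)=\inv(C^{\Arch}_{\pi(1-s)})=1-s$. Reparametrizing by $\beta=\pi(1-s)$ turns the family $\{\mu_s\}$ into the geodesic $C^{\Arch}$ of Definition~\ref{def:geo}, traversed as $s$ decreases from $1$ to $0$.

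There is no real obstacle here; the only care needed is the bookkeeping of $s\leftrightarrow\beta$ and checking that the truncation $\min(\cdot,\pi)$ becomes $\max(\cdot,0)$ under $s=1-\beta/\pi$. One should also note that the identification in \S\ref{sec:rsn} uses graphs rather than matrices. This reflection is already absorbed in Lemma~\ref{lem:mutau}, which yields $\mu_0=M$ and $\mu_1=W$ consistently with the paper's conventions.
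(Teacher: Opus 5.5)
Your proposal is correct and is essentially the paper's own argument. It applies Theorem~\ref{thm:main} with $\rho=\Arch$, translates the parameters through Lemma~\ref{lem:mutau} via $\min(\pi(1-s_1)+\pi(1-s_2),\pi)=\pi\bigl(1-\max(s_1+s_2-1,0)\bigr)$, and reads off $\inv(\mu_s)=1-s$ from Lemma~\ref{lem:kendall}.
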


\begin{proof}
Theorem~\ref{thm:main} with $\rho=\Arch$ and Lemma~\ref{lem:mutau}:
$\min(\pi(1-s_1)+\pi(1-s_2),\pi)=\pi\big(1-\max(s_1+s_2-1,0)\big)$. By Lemma~\ref{lem:kendall},
$\inv(\mu_s)=\pi(1-s)/\pi=1-s$.
\end{proof}

The following construction is known. A network obtained by rotating a set of points in the
plane is called stretchable in~\cite{AHRV2007} and a \emph{geometric sorting network} by
Dauvergne~\cite{Dauvergne2022}. By Dauvergne's subnetwork theorem, a random $m$-out-of-$n$
subnetwork of the uniformly random sorting network converges in distribution, as $n\to\infty$,
to the geometric network of $m$ independent points with distribution $\Arch$. The
uniformly random network itself is, with high probability, not geometric~\cite{AGH2012}. We
consider the same construction for an arbitrary profile $\rho$: item (i) below is known, while
items (ii) and (iii) give the limit shapes of its permutations and single out the Gaussian case.

\begin{proposition}[geometric sorting network]\label{prop:geom}
Let $p_1,\dots,p_n$ be independent points with distribution $\rho$, and for each
$\theta\in[0,\pi]$ order the points by the value of the projection $\langle p_i,u_\theta\rangle$;
let $\sigma_n(\theta)\in S_n$ be the permutation taking the order at $\theta=0$ to the order at
$\theta$. Then almost surely:
\begin{enumerate}
  \item[\rm(i)] as $\theta$ increases from $0$ to $\pi$, the order changes by adjacent
    transpositions, each pair swaps exactly once, and $\sigma_n(\pi)$ is the reverse identity
    permutation; in other words, $\theta\mapsto\sigma_n(\theta)$ is a sorting network;
  \item[\rm(ii)] the permuton of the permutation $\sigma_n(\theta)$ built from its graph
    $\{(i/n,\sigma_n(\theta)(i)/n)\}$, as in Theorem~\ref{thm:ahrv}, converges, as $n\to\infty$,
    to the permuton with copula $C^\rho_{\pi-\theta}$, and the fraction of transpositions
    performed by angle $\theta$ converges to $\theta/\pi$;
  \item[\rm(iii)] for $\rho=\cN(0,I_2)$ the permutation $\sigma_n(\pi/2)$ is uniformly
    distributed on $S_n$ for every $n$; among absolutely continuous rotation-invariant $\rho$,
    the limit of $\sigma_n(\pi/2)$ equals $\Unif$ only for the Gaussian one.
\end{enumerate}
\end{proposition}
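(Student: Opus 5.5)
For (i) I will use the elementary geometry of projections. The order of $\langle p_i,u_\theta\rangle$ and $\langle p_j,u_\theta\rangle$ changes exactly when $\langle p_i-p_j,u_\theta\rangle=0$. Writing $p_i-p_j=r_{ij}(\cos\omega_{ij},\sin\omega_{ij})$, this happens when $\theta\equiv\omega_{ij}+\pi/2\pmod\pi$, so each pair crosses exactly once in $(0,\pi)$. Almost surely, three facts hold:
\begin{itemize}
\item no two points coincide;
\item no $\omega_{ij}\equiv\pm\pi/2$, so no crossing happens at $\theta=0$ or $\pi$;
\item the $\binom n2$ angles $\omega_{ij}\bmod\pi$ are pairwise distinct.
\end{itemize}
For pairs sharing an index, distinctness follows because three absolutely continuous points are almost surely not collinear. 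For disjoint pairs it follows because the differences are independent and absolutely continuous, hence have a.s.\ distinct directions. Thus crossings occur one at a time. A crossing pair must be adjacent in the current order: two points with equal projections have nothing strictly between them, and ties with a third point are excluded. At $\theta=\pi$ all projections change sign, so the order is reversed. Hence $\theta\mapsto\sigma_n(\theta)$ is a reduced word of $w_0$, that is, a sorting network.

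For (ii) I will reduce to Theorem~\ref{thm:sampling}. The graph permuton of $\sigma_n(\theta)$ places the $k$-th point in the $\theta=0$ order at height equal to its rank in the $\theta$ order. Its empirical shape is therefore the rank-normalized sample of the pair $(\langle p_i,u_0\rangle,\langle p_i,u_\theta\rangle)$, i.e.\ of $n$ i.i.d.\ copies of $(X_0,X_\theta)$. This is a $\mu$-random permutation for the permuton of the copula of $(X_0,X_\theta)$, up to the row-reflection convention of \S\ref{sec:defs}. As in the proof of Lemma~\ref{lem:mutau}, that copula, in the graph convention, is $C^\rho_{\pi-\theta}$. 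Theorem~\ref{thm:sampling} then gives almost-sure weak convergence.

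The fraction of transpositions performed by angle $\theta$ is the number of pairs $i<j$ with $\omega_{ij}+\pi/2\bmod\pi\in(0,\theta)$, divided by $\binom n2$. This is a U-statistic with kernel bounded by $1$. By the strong law for U-statistics (Hoeffding), it converges almost surely to its mean. The mean is $\theta/\pi$, because the direction of $p_1-p_2$ is uniform: its law is rotation-invariant, exactly as in Lemma~\ref{lem:kendall}. Alternatively, one can get this from continuity of $\inv$ and Lemma~\ref{lem:kendall}, since the number of performed transpositions equals the inversion count of $\sigma_n(\theta)$.

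For (iii), take $\rho=\cN(0,I_2)$. The first coordinates $(G_1^{(i)})$ and second coordinates $(G_2^{(i)})$ are independent i.i.d.\ families with continuous law. The permutation $\sigma_n(\pi/2)$ maps the rank order of the $G_1$'s to the rank order of the $G_2$'s. For that I will use a symmetry argument: conditionally on the $G_1$'s, the $G_2$'s are exchangeable and almost surely distinct, so their ranks along the $G_1$ order form a uniform permutation. The limit statement follows from (ii) with $\theta=\pi/2$, since the limit has copula $C^\rho_{\pi/2}$, together with Theorem~\ref{thm:rigidity}. The graph reflection $v\mapsto1-v$ fixes $\Unif$, and by Lemma~\ref{lem:center} $\Unif$ can only occur at $\beta=\pi/2$. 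The main obstacle is keeping the orientation conventions straight (matrix versus graph, and $\beta$ versus $\pi-\theta$); this is bookkeeping already done in Lemma~\ref{lem:mutau}.
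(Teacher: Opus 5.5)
Your proposal is correct and follows the paper's proof essentially step by step. Part (i) rests on the almost surely pairwise distinct directions of the $p_i-p_j$, which you justify more carefully than the paper via non-collinearity and independence of disjoint differences. Part (ii) uses the permuton sampling theorem plus a U-statistic whose mean is $\theta/\pi$ by uniformity of the direction of $p_1-p_2$, and part (iii) uses independence of the Gaussian coordinates together with Theorem~\ref{thm:rigidity}. Your alternative route to the transposition fraction, via the inversion count, continuity of $\inv$ and Lemma~\ref{lem:kendall}, also works, provided you track the sign reversal between the matrix and graph conventions. The graph permuton has $\inv=1-\theta/\pi$, which counts the non-inverted pairs.
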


\begin{proof}
(i) A pair $i,j$ changes order exactly at the $\theta_{ij}\in[0,\pi)$ for which
$u_{\theta}\perp p_i-p_j$; the directions $p_i-p_j$ are almost surely pairwise distinct, and at
the moment $\theta_{ij}$ no third point has the same projection, so two adjacent points swap.
At $\theta=\pi$ all projections change sign.
(ii) $\sigma_n(\theta)$ is the rank permutation of $n$ independent pairs
$(\langle p_i,u_0\rangle,\langle p_i,u_\theta\rangle)$, that is, a $\mu$-random permutation
for the permuton $\mu$ whose copula is that of the pair of projections at angle $\theta$, namely
$C^\rho_{\pi-\theta}$ (Definition~\ref{def:proj}); convergence is Theorem~\ref{thm:sampling}
applied to the reflected permuton: the permuton built from the graph and the permuton $P_\sigma$
of \S\ref{sec:defs} differ by the reflection $v\mapsto1-v$.
The angle $\theta_{ij}$ is the direction of the vector $p_i-p_j$ modulo $\pi$, uniform on
$[0,\pi)$ by rotation invariance; the fraction of pairs with $\theta_{ij}\le\theta$ is a
$U$-statistic converging to $\theta/\pi$.
(iii) For Gaussian points the $x$ and $y$ coordinates are independent, so the rank permutations
by $x$ and by $y$ are independent and uniform, and so is their composition $\sigma_n(\pi/2)$. The
converse is Theorem~\ref{thm:rigidity} with $\beta=\pi/2$.
\end{proof}

That is, the limit shapes of the permutations of the geometric network on points with distribution $\rho$ form the
geodesic $C^\rho$, and Theorem~\ref{thm:main} describes their sticky products. For
$\rho=\Arch$ these limit shapes are exactly the $\mu_s$ of Lemma~\ref{lem:mutau}, in agreement
with Dauvergne's subnetwork theorem. For $\rho=\cN$ one
obtains a sorting network whose permutation after rotation by the angle $\pi/2$ is uniformly
distributed on $S_n$ (the number of transpositions performed by that angle is random and close
to $\tfrac12\binom n2$).

\begin{corollary}[the Gaussian and Archimedean geodesics are distinct]\label{cor:manygeo}
The Gaussian and Archimedean families $C^{\cN}$ and $C^{\Arch}$ are two distinct geodesics in the
projection class $\{C^\rho\}$: by Theorem~\ref{thm:rigidity}, only the Gaussian one passes through $\Unif$.
\end{corollary}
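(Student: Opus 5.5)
The statement has two parts: both families are geodesics in the projection class, and they are distinct. I will prove them in that order.

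First, both families are geodesics. The Gaussian measure $\cN$ and the Archimedean measure $\Arch$ are absolutely continuous and rotation-invariant (Example~\ref{ex:families}). So Theorem~\ref{thm:main} together with Lemma~\ref{lem:perm} shows that each of $C^{\cN}$ and $C^{\Arch}$ is a geodesic in the sense of Definition~\ref{def:geo}.

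Second, they are distinct. I would compare them at the midpoint value $\Unif$. By Example~\ref{ex:families}(a), $C^{\cN}_{\pi/2}=\Unif$, so $\Unif$ lies in the Gaussian family. To show $\Unif\notin\{C^{\Arch}_\beta\}$, I invoke Theorem~\ref{thm:rigidity}: it suffices to check that $\Arch$ is not $\cN(0,\sigma^2I_2)$ for any $\sigma>0$. This is immediate because $\Arch$ is supported on the unit disk, whereas every nondegenerate Gaussian has unbounded support. One could equally note that the projections of $\Arch$ are uniform on $[-1,1]$ by Archimedes' theorem, not normal. Hence the two families differ as sets, and in particular as parametrized families.

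As a sanity check independent of rigidity, I would also give a direct one-line argument. If $C^{\Arch}_\beta=\Unif$ for some $\beta$, then Lemma~\ref{lem:center} forces $\beta=\pi/2$. But $C^{\Arch}_{\pi/2}$ is supported in the inscribed disk, so it assigns zero mass to the corner square $[0,\varepsilon]^2$ for small $\varepsilon$, whereas $\Unif$ gives that square mass $\varepsilon^2$.

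There is no real obstacle here. The only point needing care is that distinctness of two geodesics should mean distinctness of the families as sets, not merely as parametrizations. Both arguments above establish the stronger, set-level statement, because they exhibit the element $\Unif$ that belongs to one family and not the other.
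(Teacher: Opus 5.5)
Your proposal is correct and follows the paper's argument: both families are geodesics by Theorem~\ref{thm:main} and Lemma~\ref{lem:perm}, and they differ because $\Unif=C^{\cN}_{\pi/2}$, while Theorem~\ref{thm:rigidity} excludes $\Unif$ from $C^{\Arch}$ since $\Arch$, being supported in the unit disk, is not a nondegenerate Gaussian (a check the paper leaves implicit). Your direct support argument, that the corner square $[0,\varepsilon]^2$ lies outside the inscribed disk carrying $C^{\Arch}_{\pi/2}$, is also a valid rigidity-free confirmation.
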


\begin{proposition}[a continuum of geodesics]\label{prop:continuum}
There is a continuum of distinct geodesics of the form $C^\rho$.
\end{proposition}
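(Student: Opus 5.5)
By Theorem~\ref{thm:main}, every absolutely continuous rotation-invariant profile $\rho$ yields a geodesic $C^\rho$. So it suffices to exhibit a continuum of profiles $\rho_\alpha$ whose families are pairwise distinct as sets of copulas. Since every family takes the same value at the center of the square (Lemma~\ref{lem:center}), and the inversion number is $\beta/\pi$ in every family (Lemma~\ref{lem:kendall}), a geodesic cannot be told apart by these invariants. Instead I will compare the single element at $\beta=\pi/2$. That element is the copula of $(G_1,-G_2)$, i.e.\ of the pair of coordinates of $G$ up to the reflection $v\mapsto 1-v$.

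First I check that $\beta=\pi/2$ is canonical. Suppose $C^{\rho}=C^{\rho'}$ as families, meaning as parametrized maps $[0,\pi]\to$ copulas, which is how the geodesic determines its parameter. The value at $(\tfrac12,\tfrac12)$ identifies $\beta$ by Lemma~\ref{lem:center}, so $C^\rho_{\pi/2}=C^{\rho'}_{\pi/2}$. Even if one compares only the images, the same lemma matches the elements with equal center value, so equality of images again forces $C^\rho_{\pi/2}=C^{\rho'}_{\pi/2}$. Hence it is enough to produce a continuum of profiles $\rho$ for which the copulas of the coordinate pair $(G_1,G_2)$ are pairwise distinct.

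For the concrete family I would take the uniform measure on the unit disk raised to a power: let $\rho_\alpha$ be the rotation-invariant law with radius density proportional to $r^{\alpha}$ on $[0,1]$, for $\alpha>-1$. A simpler option is the radial family $\rho_\alpha$ with $R$ having density $\propto r^{\alpha-1}$ on $(0,1]$, for $\alpha>0$. Each of these is absolutely continuous and supported in the unit disk. For every $\alpha$ the support of the $\beta=\pi/2$ permuton is the image of the disk under $(F_\alpha\otimes F_\alpha)$, where $F_\alpha$ is the marginal distribution function. The boundary of that image is the curve $\{(F_\alpha(\cos\theta),F_\alpha(\sin\theta))\}$.

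The main obstacle is proving that these supports, or the copulas, genuinely differ for different $\alpha$. Looking only at the boundary curve's endpoint behavior is too crude. Instead I would compute one explicit functional. A natural choice is $C_{\pi/2}^{\rho_\alpha}(u,u)$ at a fixed $u$ near $0$, or better a quantity such as $\Prob{G_1\le F_\alpha^{-1}(u),\,G_2\le F_\alpha^{-1}(u)}$. Alternatively I could use Spearman's rho of $(G_1,G_2)$, which is a single real number depending continuously on $\alpha$, and show it is strictly monotone, or at least non-constant and analytic in $\alpha$. Non-constant and analytic would already give a continuum of distinct values, because an analytic function takes each value on a discrete set of parameters only.

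To establish non-constancy, I plan to evaluate two limits. As $\alpha\to\infty$ the mass concentrates on the unit circle, giving the circle copula $(\cos\Theta,\sin\Theta)$ with $\Theta$ uniform. That copula is singular, and one can check its Spearman's rho is $0$ by symmetry. This symmetry argument is a problem: by the reflection symmetry $G_2\mapsto -G_2$, Spearman's rho and every other signed concordance measure vanish for every $\alpha$. So I would instead use an even functional such as $\Prob{|G_1|\le q,\,|G_2|\le q}$ at quantile levels. At the circle limit this probability is $0$ for small $q$, because the circle avoids the central square. As $\alpha\to 0$, with $R$ concentrating at $0$ and the law becoming scale-mixed heavy, I would evaluate it explicitly.

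Continuity of the functional in $\alpha$ together with distinct limits gives an interval of values. Therefore uncountably many $\alpha$ produce distinct $C_{\pi/2}$, and hence distinct geodesics. As a backup, Theorem~\ref{thm:rigidity} already separates the Gaussian profile from all the others.
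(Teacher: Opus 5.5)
Your reduction is correct and matches the paper. By Lemma~\ref{lem:center}, two families that are equal, even only as sets, must have the same member at $\beta=\pi/2$, so it suffices to separate the copulas $C^{\rho}_{\pi/2}$ along a continuum of profiles.

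The gap is that you never actually separate them. After the reduction, the whole content of the proposition is that your invariant is non-constant in $\alpha$, and this is left as a plan:
\begin{itemize}
\item the limit as $R$ concentrates at $0$ is announced (``I would evaluate it explicitly'') but not computed;
\item continuity of the functional in $\alpha$, and its convergence to the circle value, are asserted rather than checked;
\item the conclusion ``distinct limits give an interval of values'' therefore rests on a limit you did not compute.
\end{itemize}
The rigidity ``backup'' is not a fallback: it separates only the Gaussian family from the rest, which gives two geodesics, not a continuum. You also switch parametrization midway: the lower end is $\alpha\to-1$ in the first one and $\alpha\to0$ in the second.

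The missing step is short, and it does not need the second limit at all. Fix $\epsilon\in(0,\tfrac14)$, and put $q_\alpha:=F_\alpha^{-1}(\tfrac12+\epsilon)$ and $f(\alpha):=\Prob{|G_1|\le q_\alpha,\ |G_2|\le q_\alpha}$; this is a functional of $C^{\rho_\alpha}_{\pi/2}$.
\begin{itemize}
\item $f(\alpha)\ge\Prob{R\le q_\alpha}>0$ for every finite $\alpha$.
\item $f$ is continuous, because $\rho_\alpha$ varies continuously in total variation and $F_\alpha$ is continuous and strictly increasing.
\item As $\alpha\to\infty$, $q_\alpha\to\sin(\pi\epsilon)<1/\sqrt2$ (the arcsine quantile). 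Hence for a fixed $q'$ between the two, eventually $f(\alpha)\le\Prob{R\le\sqrt2\,q'}\to0$.
\end{itemize}
A positive continuous function tending to $0$ takes every value in $(0,f(\alpha_0)]$, so you get a continuum of distinct copulas. This is exactly the mechanism of the paper's proof. The paper uses annuli $\rho_r$ so that the invariant can be read off the support with no computation: the section of the support of $C^{\rho_r}_{\pi/2}$ by $v=\tfrac12$ has a hole of length $\ell(r)>0$, and $\ell(r)\to0$.

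You also dismissed the support too quickly. For your family, the support is the image of the disk under $(x,y)\mapsto(F_\alpha(x),1-F_\alpha(y))$. Its intersection with the antidiagonal $\{u+v=1\}$, the image of $\{x=y\}$, has $u$-range $[1-F_\alpha(1/\sqrt2),\,F_\alpha(1/\sqrt2)]$. Since $R$ is stochastically increasing in $\alpha$, $F_\alpha(1/\sqrt2)$ is strictly decreasing (to $3/4$). That separates all members of your family at once.
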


\begin{proof}
Two families $C^\rho$, $C^{\rho'}$ coincide as sets only if $C_\beta^\rho=C_\beta^{\rho'}$ for
all $\beta$, since $\beta$ is recovered from the value at the square's center (Lemma~\ref{lem:center}).
Let $\rho_r$, $r\in(0,1)$, be the uniform measure on the annulus $\{r\le|z|\le1\}$, and $F_r$
its projection marginal; $F_r$ is a homeomorphism of $[-1,1]$ onto $[0,1]$. The permuton with
copula $C_{\pi/2}^{\rho_r}$ is the image of $\rho_r$ under the homeomorphism
$(x,y)\mapsto(F_r(x),1-F_r(y))$ of the square $[-1,1]^2$ onto $[0,1]^2$, so its support is the
image of the annulus, and the section of the support by the line $v=\tfrac12$ (the image of the
axis $y=0$) is the complement of an interval of length $\ell(r)=\rho_r\{|x|<r\}$. The function
$\ell$ is continuous on $(0,1)$, positive, and tends to zero as $r\to0$, hence takes a continuum
of values; distinct values of $\ell$ correspond to distinct supports, that is, to distinct
copulas $C_{\pi/2}^{\rho_r}$.
\end{proof}

\begin{remark}[Demazure product and the oriented swap process]\label{rem:osp}
The oriented swap process of Angel, Holroyd, and Romik~\cite{AHR2009} is another random
dynamics on $S_n$:
every adjacent pair standing in increasing order swaps at unit rate until the reverse identity
permutation is reached. Defant~\cite{Defant2025} showed that the permutation of this process at
time $\alpha n$ has a limiting permuton $\lambda^\alpha$, and observed that the Demazure product
acts on limit shapes: if permutations $v_n$ converge to a permuton $\mu$, then the Demazure
products of $v_n$ with the permutation of the process converge to $\mu\boxdot\lambda^\alpha$;
applying this to an independent copy of the process, one gets
$\lambda^{\alpha'}\boxdot\lambda^{\alpha}=\lambda^{\alpha+\alpha'}$, that is, the family
$\{\lambda^\alpha\}$ is a semigroup under the sticky product. Corollary~\ref{cor:rsn} is a
statement of the same type for a different dynamics, the uniformly random sorting network, and
a different semigroup: the permuton $\lambda^\alpha$ carries a singular part and differs from
$\mu_s$; whether it is the projection copula of some profile is unknown.
\end{remark}

\section{Context and open questions}\label{sec:context}

\paragraph{The arctic ellipse.} If $\rho$ is supported in the unit disk, then the pair of
projections $(X_0,X_{\pi-\beta})$ is supported in the ellipse
$\{x^2-2xy\cos(\pi-\beta)+y^2\le\sin^2(\pi-\beta)\}$, the image of the disk under the linear map
$G\mapsto(X_0,X_{\pi-\beta})$; these are the ellipses of the bottom row of
Fig.~\ref{fig:families}. The same curve is the arctic ellipse of the domain-wall six-vertex
model~\cite{Colomo2010}: the boundary between the frozen and disordered regions. The
coincidence of the shape of the support is an observation, not a theorem; a connection between
the two models is unknown to us.

\paragraph{Lax--Oleinik semigroups.} The sticky product is the $(\min,+)$ convolution of
distribution functions. The same convolutions, only in the $(\max,+)$ semiring, underlie the theory of
surface growth:
the metric composition of the directed landscape~\cite{DOV2022},
\[
  L(x,r;y,t)=\max_z\bigl[L(x,r;z,\tau)+L(z,\tau;y,t)\bigr],
\]
and the Lax--Oleinik semigroup of the KPZ
fixed point~\cite{MQR2021}. There a one-parameter semigroup is generated by a single random
function; here the geodesic $C^\rho$ is generated by a single profile $\rho$.

\paragraph{The $q\to0$ limit.} At the level of integrable models, the Demazure product, and
with it $\boxdot$, arises as the $q\to0$ limit of the colored stochastic six-vertex model, in
which permutations are built from $q$-deformed Demazure products~\cite{Tikhonov2026}. A natural
question is whether a $q$-deformation of the geodesics $C^\rho$ exists.

\paragraph{Open questions.}
\begin{itemize}
  \item Does there exist a one-parameter $\boxdot$-submonoid from $W$ to $M$ passing through
    $\Unif$ and different from the Gaussian one (for instance, containing the singular root
    $Q$)? The projection construction does not provide one (Theorem~\ref{thm:rigidity}). A
    natural candidate is the family of Mallows permutons~\cite{Starr2009}: the continuous-time
    Mallows process~\cite{Corsini2022,AdamczakKotowski2024} goes from the identity permutation
    to the reverse identity permutation, is exactly uniform at time $t=1$, and its limit shapes
    form a path from $M$ to $W$ through $\Unif$. Whether this path is a $\boxdot$-geodesic is
    unknown.
  \item By Rahman--Vir\'ag--Vizer~\cite{RVV}, the Archimedean path is the unique
    minimal-energy path between the diagonal and the antidiagonal in the space of permutons with
    the Wasserstein distance; here it also turns out to be a
    $\boxdot$-geodesic. Are these structures compatible for other profiles?
  \item \emph{An intrinsic Gaussian sorting network.} The geometric network of
    Proposition~\ref{prop:geom} defines a distribution on sorting networks through external
    random points. For the Archimedean measure the same limit is attained by an intrinsic
    measure --- the uniform measure on all sorting networks. Is there an intrinsic random
    sorting network (uniform on a natural class, or Markov in time) with a Gaussian limit, that
    is, uniformly random in the middle? The Mallows process is uniform at time $t=1$ but uses
    all transpositions, not only adjacent ones~\cite{Corsini2022}.
  \item Is the limiting permuton $\lambda^\alpha$ of the oriented swap process a projection
    copula of some, not necessarily absolutely continuous, rotation-invariant profile
    (Remark~\ref{rem:hyp})?
\end{itemize}


\begin{thebibliography}{99}

\bibitem{AdamczakKotowski2024} R.~Adamczak, M.~Kotowski.
  \emph{The global and local limit of the continuous-time Mallows process.} arXiv:2404.08554 (2024).

\bibitem{AGH2012} O.~Angel, V.~Gorin, A.~E.~Holroyd.
  \emph{A pattern theorem for random sorting networks.}
  Electron.\ J.\ Probab.\ \textbf{17} (2012), no.~99, 1--16.

\bibitem{AHR2009} O.~Angel, A.~E.~Holroyd, D.~Romik.
  \emph{The oriented swap process.} Ann.\ Probab.\ \textbf{37}(5) (2009), 1970--1998.

\bibitem{AHRV2007} O.~Angel, A.~E.~Holroyd, D.~Romik, B.~Vir\'ag.
  \emph{Random sorting networks.} Adv.\ Math.\ \textbf{215}(2) (2007), 839--868.

\bibitem{BjornerBrenti2005} A.~Bj\"orner, F.~Brenti. \emph{Combinatorics of Coxeter Groups.}
  Graduate Texts in Mathematics \textbf{231}, Springer, New York, 2005.

\bibitem{Bryc1995} W.~Bryc. \emph{The Normal Distribution: Characterizations with Applications.}
  Lecture Notes in Statistics \textbf{100}, Springer, 1995.

\bibitem{Colomo2010} F.~Colomo, A.~G.~Pronko.
  \emph{The arctic curve of the domain-wall six-vertex model.}
  J.~Stat.\ Phys.\ \textbf{138} (2010), 662--700.

\bibitem{Corsini2022} B.~Corsini. \emph{Continuous-time Mallows processes.} arXiv:2205.04967 (2022).

\bibitem{Darsow1992} W.~F.~Darsow, B.~Nguyen, E.~T.~Olsen.
  \emph{Copulas and Markov processes.} Illinois J.~Math.\ \textbf{36}(4) (1992), 600--642.

\bibitem{Dauvergne2022} D.~Dauvergne.
  \emph{The Archimedean limit of random sorting networks.}
  J.~Amer.\ Math.\ Soc.\ \textbf{35} (2022), 1215--1267.

\bibitem{DOV2022} D.~Dauvergne, J.~Ortmann, B.~Vir\'ag.
  \emph{The directed landscape.} Acta Math.\ \textbf{229}(2) (2022), 201--285.

\bibitem{Defant2025} C.~Defant. \emph{Permutons from Demazure products.} arXiv:2505.15630 (2025).

\bibitem{Ehresmann1934} C.~Ehresmann. \emph{Sur la topologie de certains espaces homog\`enes.}
  Ann.\ of Math.\ (2) \textbf{35}(2) (1934), 396--443.

\bibitem{HKMRS2013} C.~Hoppen, Y.~Kohayakawa, C.~G.~Moreira, B.~R\'ath, R.~M.~Sampaio.
  \emph{Limits of permutation sequences.} J.~Combin.\ Theory Ser.~A \textbf{120}(1) (2013), 93--113.

\bibitem{MQR2021} K.~Matetski, J.~Quastel, D.~Remenik.
  \emph{The KPZ fixed point.} Acta Math.\ \textbf{227}(1) (2021), 115--203.

\bibitem{Pflueger2022} N.~Pflueger. \emph{An extended Demazure product on integer permutations via
  min-plus matrix multiplication.} arXiv:2206.14227 (2022).

\bibitem{PresnyakovThesis} S.~Presnyakov. \emph{Continualization of the Hecke monoid} (in Russian).
  Bachelor's thesis, St.~Petersburg State University, 2025. Advisor: A.~Tiskin.

\bibitem{PresnyakovTiskin} S.~Presnyakov, A.~Tiskin.
  \emph{The Permuton Hecke Monoid: An Algebraic Framework for Weighted Sequence Alignment.}
  In: International Conference Polynomial Computer Algebra~'2025, St.~Petersburg,
  September~29 -- October~4, 2025. Euler International Mathematical Institute, 2025,
  pp.~97--99. \url{https://pca.conf-pdmi.ru/2025/}

\bibitem{Nelsen2006} R.~B.~Nelsen. \emph{An Introduction to Copulas.} 2nd ed., Springer, 2006.

\bibitem{RVV} M.~Rahman, B.~Vir\'ag, M.~Vizer.
  \emph{Geometry of permutation limits.} Combinatorica \textbf{39}(4) (2019), 933--960.

\bibitem{Starr2009} S.~Starr. \emph{Thermodynamic limit for the Mallows model on $S_n$.}
  J.~Math.\ Phys.\ \textbf{50}(9) (2009), 095208.

\bibitem{tiskin2008semi} A.~Tiskin. \emph{Semi-local string comparison: algorithmic techniques
  and applications.} Math.\ Comput.\ Sci.\ \textbf{1} (2008), 571--603.

\bibitem{tiskin2015fast} A.~Tiskin. \emph{Fast distance multiplication of unit-Monge matrices.}
  Algorithmica \textbf{71}(4) (2015), 859--888.

\bibitem{Tikhonov2026} M.~Tikhonov. \emph{Random permutations from $q$-Demazure products.}
  arXiv:2604.06532 (2026).

\end{thebibliography}
\end{document}